\documentclass[a4paper]{amsart}
\usepackage{amsmath,amssymb,enumerate}
\usepackage{tikz}
\usepackage{hyperref}


\title[Connected Choice and the Brouwer Fixed Point Theorem]
{Connected Choice and\\ the Brouwer Fixed Point Theorem}

\author{Vasco Brattka}
\author{St\'{e}phane Le Roux}
\author{Joseph S.\ Miller}
\author{Arno Pauly}

\thanks{This research was supported by a Marie Curie International Research
Staff Exchange Scheme Fellowship within the 7th European Community Framework Programme and
by the National Research Foundation of South Africa.
Earlier versions of results from this article were presented at the conference {\em Computability in Europe} (CiE) in 2012 and 2016 \cite{BLP12a,BLRMP16}.\\
2010 Mathematics Subject Classification: 03D30, 03D78, 03F60, 37C25, computable analysis, Weihrauch lattice, Brouwer Fixed Point Theorem}

\address{Faculty of Computer Science, Universit\"at der Bundeswehr M\"unchen, Germany and Department of Mathematics and Applied Mathematics, University of Cape Town, South Africa} 
\address{Laboratoire Sp\'ecification et V\'erification, CNRS \& \'Ecole Normale Sup\'erieure Paris-Saclay, Cachan, France}
\address{Department of Mathematics, University of Wisconsin--Madison, USA}
\address{Department of Computer Science, Swansea University, United Kingdom}

\email{Vasco.Brattka@cca-net.de}
\email{leroux@lsv.fr}
\email{jmiller@math.wisc.edu}
\email{a.m.pauly@swansea.ac.uk}



\def\AA{{\mathcal A}}

\def\CC{{\mathcal C}}

\def\NN{{\mathcal N}}

\def\SS{{\mathcal S}}

\def\UU{{\mathcal U}}



\def\IN{{\mathbb{N}}}
\def\IQ{{\mathbb{Q}}}
\def\IR{{\mathbb{R}}}

\def\IZ{{\mathbb{Z}}}


\def\TO{\Longrightarrow}
\def\In{\subseteq}

\def\into{\hookrightarrow}

\def\prefix{\sqsubseteq}

\def\mto{\rightrightarrows}

\def\id{{\rm id}}

\def\dom{{\rm dom}}
\def\range{{\rm range}}

\def\diam{{\rm diam}}

\def\ind{{\rm ind}}

\def\Ls{{\rm Ls}}

\def\Cantor{{\{0,1\}^\IN}}
\def\Baire{{\IN^\IN}}

\def\cc{\mathrm c}

\def\LLPO{\text{\rm\sffamily LLPO}}
\def\WKL{\text{\rm\sffamily WKL}}

\def\IVT{\text{\rm\sffamily IVT}}

\def\BFT{\mbox{\rm\sffamily BFT}}

\def\C{\mbox{\rm\sffamily C}}
\def\ConC{\mbox{\rm\sffamily CC}}

\def\LLPO{\mbox{\rm\sffamily LLPO}}

\def\L{\text{\rm\sffamily L}}

\def\Con{\text{\rm\sffamily Con}}
\def\Fix{\text{\rm\sffamily Fix}}
\def\J{\text{\rm\sffamily J}}

\def\PWCC{\text{\rm\sffamily PWCC}}
\def\XC{\text{\rm\sffamily XC}}

\def\CQ{\mathrm C\IQ}

\def\leqW{\mathop{\leq_{\mathrm{W}}}}
\def\equivW{\mathop{\equiv_{\mathrm{W}}}}
\def\nequivW{\mathop{\not\equiv_{\mathrm{W}}}}
\def\leqSW{\mathop{\leq_{\mathrm{sW}}}}

\def\equivSW{\mathop{\equiv_{\mathrm{sW}}}}

\def\nleqW{\mathop{\not\leq_{\mathrm{W}}}}

\def\lW{\mathop{<_{\mathrm{W}}}}
\def\lSW{\mathop{<_{\mathrm{sW}}}}

\def\bigtimes{\mathop{\mathsf{X}}}

\newcommand{\W}{\mathrm{W}}

\date{\today}

\newtheorem{theorem}{Theorem}[section]
\newtheorem{proposition}[theorem]{Proposition}
\newtheorem{lemma}[theorem]{Lemma}
\newtheorem{fact}[theorem]{Fact}

\newtheorem{corollary}[theorem]{Corollary}
\newtheorem{localclaim}{Claim}[theorem]

\theoremstyle{definition}
\newtheorem{definition}[theorem]{Definition}
\newtheorem{localdef}[localclaim]{Definition}

\newtheorem{question}[theorem]{Question}

\begin{document}

\maketitle

\begin{abstract}
We study the computational content of the Brouwer Fixed Point Theorem in the Weihrauch lattice.
Connected choice is the operation that finds a point in a non-empty
connected closed set given by negative information. 
One of our main results is that for any fixed dimension the Brouwer Fixed Point Theorem of that dimension 
is computably equivalent to connected choice of the Euclidean unit cube of the same dimension. 
Another main result is that connected choice is complete
for dimension greater than or equal to two in the sense that it is computably equivalent
to Weak K\H{o}nig's Lemma. 
While we can present two independent proofs for dimension three and upwards that are either based on a simple geometric construction
or a combinatorial argument, 
the proof for dimension two is based on a more involved inverse limit construction.
The connected choice operation in dimension one is known to be equivalent to the Intermediate Value Theorem;
we prove that this problem is not idempotent in contrast to the case of dimension two and upwards.
We also prove that Lipschitz continuity with Lipschitz constants strictly larger than one does not
simplify finding fixed points. 
Finally, we prove that finding a connectedness component of a closed subset
of the Euclidean unit cube of any dimension greater or equal to one is equivalent to
Weak K\H{o}nig's Lemma.  
In order to describe these results, we introduce a representation
of closed subsets of the unit cube by trees of rational complexes.\\[0.2cm]
{\bf Keywords:} Computable analysis, Weihrauch lattice, reverse mathematics, 
choice principles, connected sets, fixed point theorems.
\end{abstract}

\tableofcontents
\pagebreak

\section{Introduction}
\label{sec:introduction}

In this paper, we continue with the programme to classify the computational content of mathematical
theorems in the Weihrauch lattice (see \cite{GM09,BG11,BG11a,Pau09,Pau10,BGM12,HRW12}).
This lattice is induced by Weihrauch reducibility, which is a reducibility for multi-valued partial functions 
$f:\In X\mto Y$ on represented spaces $X,Y$. 
Intuitively, $f\leqW g$ reflects the fact that $f$ can be realized
with a single application of $g$ as an oracle. Hence, if two multi-valued functions are equivalent
in the sense that they are mutually reducible to each other, then they are equivalent as 
computational resources, as far as computability is concerned.

Many theorems in mathematics are actually of the logical form
\[(\forall x\in X)(\exists y\in Y)\;P(x,y)\]
and such theorems can straightforwardly be represented by a multi-valued function $f:X\mto Y$
with $f(x):=\{y\in Y:P(x,y)\}$ (sometimes partial $f$ are needed, where the domain captures
additional requirements that the input $x$ has to satisfy).
In some sense, the multi-valued operation $f$ directly reflects the computational task of the 
theorem to find some suitable $y$ for any $x$.
Hence, in a very natural way the classification of a theorem can be achieved via a classification of
the corresponding multi-valued function that represents the theorem.

Theorems that have been compared and classified in this sense include Weak K\H{o}nig's Lemma $\WKL$,
the Hahn-Banach Theorem~\cite{GM09}, the Baire Category Theorem~\cite{BHK18}, Banach's Inverse Mapping Theorem,
the Open Mapping Theorem, the Uniform Boundedness Theorem, the Intermediate Value
Theorem~\cite{BG11a}, the Bolzano-Weierstra\ss{} Theorem~\cite{BGM12}, 
Nash Equilibria~\cite{Pau10}, the Radon-Nikodym Theorem~\cite{HRW12} and the Vitali Covering Theorem~\cite{BGHP17}. 
In this paper, we to classify the Brouwer Fixed Point Theorem.

\begin{theorem}[Brouwer Fixed Point Theorem 1911]
Every continuous function $f:[0,1]^n\to[0,1]^n$ has a fixed point $x\in[0,1]^n$, i.e., a point such
that $f(x)=x$.
\end{theorem}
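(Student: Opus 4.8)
The plan is to give the classical combinatorial proof via Sperner's Lemma, which is elementary and self-contained. First I would reduce the statement to the standard $n$-simplex $\Delta^n=\{x\in\mathbb{R}^{n+1}:x_i\geq 0\text{ for all }i\text{ and }\sum_{i=0}^{n}x_i=1\}$ in place of $[0,1]^n$. The two spaces are homeomorphic, and the fixed point property is a topological invariant: if $h\colon[0,1]^n\to\Delta^n$ is a homeomorphism and $g\colon\Delta^n\to\Delta^n$ is continuous, then $h^{-1}\circ g\circ h$ has a fixed point if and only if $g$ does. So it suffices to prove that every continuous $g\colon\Delta^n\to\Delta^n$ has a fixed point.

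Next I would set up the Sperner colouring. Fix such a $g$ and assume, for a contradiction, that $g(x)\neq x$ for all $x$. For any triangulation $T$ of $\Delta^n$, colour each vertex $v$ of $T$ by some index $\lambda(v)\in\{0,\dots,n\}$ with $v_{\lambda(v)}>0$ and $g(v)_{\lambda(v)}\leq v_{\lambda(v)}$; such an index exists because $\sum_i g(v)_i=\sum_i v_i=1$, so one cannot have $g(v)_i>v_i$ for all $i$ with $v_i>0$. This $\lambda$ is a Sperner colouring: a vertex on the subface of $\Delta^n$ spanned by $\{e_j:j\in S\}$ has $v_i=0$ for $i\notin S$, hence $\lambda(v)\in S$; in particular $\lambda(e_i)=i$. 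I would then prove Sperner's Lemma by induction on $n$: count modulo $2$ the pairs $(\sigma,\tau)$ where $\sigma$ is an $n$-cell of $T$ and $\tau$ an $(n-1)$-face of $\sigma$ carrying exactly the colours $\{0,\dots,n-1\}$. Grouping by $\sigma$, this count is congruent to the number of \emph{fully coloured} cells (a fully coloured cell has exactly one such face, any other cell has zero or two). Grouping by $\tau$, it equals twice the number of interior faces with colour set $\{0,\dots,n-1\}$ plus the number of such faces on the boundary; the Sperner condition forces every boundary face with colours $\{0,\dots,n-1\}$ onto the facet $\{x_n=0\}$, and the inductive hypothesis applied to the induced triangulation and colouring of that facet makes the latter count odd. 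Hence there is an odd, in particular nonzero, number of fully coloured $n$-cells.

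Then comes the limiting argument. Take triangulations $T_k$ of $\Delta^n$ with mesh tending to $0$ (e.g.\ iterated barycentric subdivisions), and in each fix a fully coloured cell with vertices $x^{(k,0)},\dots,x^{(k,n)}$, where $x^{(k,i)}$ has colour $i$. By compactness of $\Delta^n$, pass to a subsequence along which $x^{(k,0)}$ converges to some $x^\ast\in\Delta^n$; since the mesh tends to $0$, every $x^{(k,i)}$ also converges to $x^\ast$. By the choice of colours, $g(x^{(k,i)})_i\leq x^{(k,i)}_i$ for all $k$ and $i$, so continuity of $g$ gives $g(x^\ast)_i\leq x^\ast_i$ for every $i\in\{0,\dots,n\}$; since both sides sum to $1$ over $i$, equality holds throughout, i.e.\ $g(x^\ast)=x^\ast$, contradicting the assumption.

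I expect the main obstacle to be the proof of Sperner's Lemma, in particular getting the parity bookkeeping in the inductive step exactly right: one must handle degenerate subsimplices correctly and verify precisely the claim that an $(n-1)$-face with colour set $\{0,\dots,n-1\}$ which does not complete to a fully coloured cell is shared by exactly two $n$-cells. The only other point requiring care is the routine verification that the index colouring read off from $g$ is genuinely a Sperner colouring on every subface of $\Delta^n$. An alternative would be to deduce the theorem from the no-retraction theorem for the ball, but an elementary proof of that statement again reduces to Sperner-type combinatorics, so the Sperner route is the most economical here.
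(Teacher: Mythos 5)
The paper does not prove this theorem: it is stated purely as classical background, with an attribution to Hadamard (1910) and Brouwer (1911) and a citation, and the paper's actual contributions concern the \emph{computational} content of the theorem (its Weihrauch degree), not its truth. So there is no in-paper proof to compare against.

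Your proposed proof via Sperner's Lemma is correct and is the standard elementary route. The reduction to $\Delta^n$ via homeomorphism-invariance of the fixed point property is fine; the existence of an admissible colour at each vertex follows, as you say, from $\sum_i g(v)_i=\sum_i v_i=1$ (if $g(v)_i>v_i$ for all $i$ with $v_i>0$, the sums could not agree); the boundary condition $\lambda(v)\in S$ on a face spanned by $\{e_j:j\in S\}$ follows because $v_i=0$ for $i\notin S$; the double-counting inductive proof of Sperner's Lemma is sound, using that an interior $(n-1)$-face lies in exactly two $n$-cells and a boundary one in exactly one, together with the inductive hypothesis on the facet $\{x_n=0\}$; and the compactness limiting argument correctly converts the inequalities $g(x^{(k,i)})_i\leq x^{(k,i)}_i$ into $g(x^\ast)_i\leq x^\ast_i$ for all $i$, which forces equality since both sides sum to $1$. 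You have also correctly flagged the two places that require care (degenerate faces in the parity count, and the verification that $\lambda$ is Sperner on every subface). One could equally deduce Brouwer from the no-retraction theorem or from degree theory, but as you note those routes ultimately rest on comparable combinatorial or homological input, so the Sperner argument is the most self-contained choice for this classical statement.
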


This theorem was first proved by Hadamard in 1910 and later by Brouwer~\cite{Bro11},
after whom it is named the Brouwer Fixed Point Theorem.
Brouwer is known as one of the founders of intuitionism, which is one of the well-studied
varieties of constructive mathematics and ironically, the theorem that he is best known for
does not admit any constructive proof.\footnote{However, as noticed already by Brouwer himself, 
the theorem admits an approximative constructive version~\cite{Bro52}.}
This fact has been confirmed in many different ways, most relevant for us is the 
counterexample in Russian constructive analysis by Orevkov~\cite{Ore63}, which
was transferred into computable analysis by Baigger~\cite{Bai85}.
Baigger's counterexample shows that from dimension two upwards (i.e., $n\geq2$) there are computable
functions $f:[0,1]^n\to[0,1]^n$ without computable fixed point $x$.
Baigger's proof actually proceeds by encoding a Kleene tree (implicitly via a pair of computably inseparable sets) 
into a suitable computable function $f$ and hence it can be seen as a reduction
of Weak K\H{o}nig's Lemma to the Brouwer Fixed Point Theorem.\footnote{See \cite{Pot08} for a discussion of these counterexamples.}
The essential geometrical content of this construction is that the map
\[A\mapsto (A\times[0,1])\cup([0,1]\times A)\]
maps {\em arbitrary} non-empty closed sets $A\In[0,1]$ to {\em connected} non-empty closed
subsets of $[0,1]^2$ such that any pair in the resulting set has at least one component that is in $A$. 

Constructions similar to those used for the above counterexamples 
have been utilized in order to prove that the Brouwer Fixed Point Theorem
is equivalent to Weak K\H{o}nig's Lemma in reverse mathematics \cite{Sim99,ST90,Koh05b}
and to analyze computability properties of fixable sets \cite{Mil02a},
but a careful analysis of these reductions reveals that none of them can be 
straightforwardly transferred into a {\em uniform} reduction in the sense that we are seeking here.
The problem is that there is no uniform way to select a component $x_i$ of a pair $(x_1,x_2)$ such that  $x_i\in A$, 
given that at least one of the components has this property.
The results cited above essentially characterize the complexity of the fixed points themselves, whereas we
want to characterize the complexity of finding a fixed point, given the function.
This requires full uniformity.

In the Weihrauch lattice, the Brouwer Fixed Point Theorem of dimension $n$ is represented by the 
multi-valued function $\BFT_n:\CC([0,1]^n,[0,1]^n)\mto[0,1]^n$ that maps any 
continuous function $f:[0,1]^n\to[0,1]^n$ to the set of its fixed points $\BFT_n(f)\In[0,1]^n$.
The question now is where $\BFT_n$ is located in the Weihrauch lattice? 
It easily follows from a meta theorem presented in \cite{BG11a} that the Brouwer Fixed Point Theorem $\BFT_n$ is
reducible to Weak K\H{o}nig's Lemma $\WKL$ for any dimension $n$, i.e., $\BFT_n\leqW\WKL$.
However, for which dimensions $n$ do we also obtain the inverse reduction?
Clearly not for $n=0$, since $\BFT_0$ is computable, and clearly not for $n=1$, since 
$\BFT_1$ is equivalent to the Intermediate Value Theorem $\IVT$ and hence not equivalent
to $\WKL$, as proved in \cite{BG11a}.\footnote{We mention that 
in Bishop style constructive reverse mathematics the Intermediate Value Theorem is equivalent to Weak K\H{o}nig's Lemma~\cite{Ish06}, as parallelization is freely available in this framework.} 

In order to approach this question for a general dimension $n$, 
we introduce a choice principle $\ConC_n$ that we call {\em connected choice}
and which is just the closed choice operation restricted to connected subsets. That is, in the sense discussed above $\ConC_n$
is the multi-valued function that  
represents the following mathematical statement: every non-empty connected closed set $A\In[0,1]^n$ has a point
$x\in[0,1]^n$. Since closed sets are represented by negative information (i.e., by an enumeration of open balls
that exhaust the complement), the computational task of $\ConC_n$ consists in finding a point in a closed set $A\In[0,1]^n$
that is promised to be non-empty and connected and that is given by negative information. 

One of our main results, proved in Section~\ref{sec:BFT}, is that the Brouwer Fixed Point Theorem is equivalent to connected choice for 
each fixed dimension $n$, i.e., 
\[\BFT_n\equivW\ConC_n.\] 
This result allows us to study the Brouwer Fixed
Point Theorem in terms of the operation $\ConC_n$ that is easier to handle since
it involves neither function spaces nor fixed points. This is also another instance of the observation
that several important theorems are equivalent to certain choice principles (see \cite{BG11a}) and
many important classes of computable functions can be calibrated in terms of choice (see \cite{BBP12}).
For instance, closed choice on Cantor space $\C_{\{0,1\}^\IN}$ and on the unit cube $\C_{[0,1]^n}$ are both
easily seen to be equivalent to Weak K\H{o}nig's Lemma $\WKL$, i.e., $\WKL\equivW\C_{\{0,1\}^\IN}\equivW\C_{[0,1]^n}$ 
for any $n\geq1$. 
Studying the Brouwer Fixed  Point Theorem in form of $\ConC_n$ now amounts to comparing
$\C_{[0,1]^n}$ with its restriction $\ConC_n$.

Our second main result, proved in Sections~\ref{sec:dimension} and \ref{sec:dim2},
is that from dimension two upwards connected choice is equivalent to
Weak K\H{o}nig's Lemma, i.e., $\ConC_n\equivW\C_{[0,1]}$ for $n\geq2$.
In Section~\ref{sec:dimension}, we present a proof for dimension $n\geq3$
that is based on the geometrical construction
\[A\mapsto(A\times[0,1]\times\{0\})\cup(A\times A\times[0,1])\cup([0,1]\times A\times\{1\})\]
that maps an {\em arbitrary} non-empty closed set $A\In[0,1]$ to a {\em pathwise connected} non-empty closed subset of $[0,1]^3$ 
that has the property that from any of its points we can compute a point of the original set $A$ in a uniform sense.
This construction seems to require at least dimension three in a crucial sense.
The same is true for an alternative combinatorial proof of the same result that we provide.
The proof for dimension 2 is presented in Section~\ref{sec:dim2} and is based on 
a more involved inverse limit construction and hence on an entirely different idea.
It only yields a connected (not necessarily pathwise connected) set in general. 
We are left with the open question whether pathwise connected choice of dimension two is 
equivalent to connected choice of dimension two. 

In Section~\ref{sec:Lipschitz}, we show that Lipschitz continuity with a Lipschitz constant $L>1$ does
not simplify finding fixed points. Using results of Neumann~\cite{Neu15}, we obtain a trichotomy
of the problem of finding fixed points for Lipschitz continuous functions that depends on 
whether the Lipschitz constant $L$ satisfies $L<1$, $L=1$ or $L>1$.

In order to prove some of our results, we use a representation of closed sets by 
trees of so-called {\em rational complexes}, which we introduce
in Section~\ref{sec:closed-sets-trees}.
It can be seen as a generalization of the well-known representation of
co-c.e.\ closed subsets of Cantor space $\{0,1\}^\IN$ by binary trees.
As a side result we prove that finding a connectedness component
of a closed set  for any fixed dimension from one upwards is 
equivalent to Weak K\H{o}nig's Lemma.
This yields conclusions along the line of earlier studies of 
connected components in \cite{LZ08a}.

Finally, we provide a so-called Displacement Principle in Section~\ref{sec:displacement}
that helps us in Section~\ref{sec:idempotency} to show  that $\ConC_1$ is neither idempotent
nor a cylinder.

In the following Section~\ref{sec:Weihrauch-lattice}, we start
with a short summary of relevant definitions and results regarding the
Weihrauch lattice.

\section{The Weihrauch Lattice}
\label{sec:Weihrauch-lattice}

In this section, we briefly recall some basic results and definitions regarding
the Weihrauch lattice. The original definition of Weihrauch reducibility is due to Weihrauch
and has been studied for many years (see \cite{Ste89,Wei92a,Wei92c,Her96,Bra99,Bra05}).
Only recently it has been noticed that a certain variant of this reducibility yields
a lattice that is very suitable for the classification of mathematical theorems
(see  \cite{GM09,Pau09,Pau10,BG11,BG11a,BBP12,BGM12}). The basic reference for all notions
from computable analysis is \cite{Wei00}, and a survey on Weihrauch complexity can be found in \cite{BGP17}. 
The Weihrauch lattice is a lattice of multi-valued functions on represented
spaces. A {\em representation} $\delta$ of a set $X$ is just a surjective partial
map $\delta:\In\IN^\IN\to X$. In this situation we call $(X,\delta)$ a {\em represented space}.
In general we use the symbol ``$\In$'' in order to indicate that a function is potentially partial.
Using represented spaces we can define the concept of a realizer. 
For $f:\In X\mto Y$ and $g:\In Y\mto Z$ we write $g\circ f$ or $gf$ for the {\em composition}
defined by $(g\circ f)(x):=\{z\in Z:(\exists y\in f(x))\;z\in g(y)\}$ and $\dom(g\circ f):=\{x\in \dom(f):f(x)\In\dom(g)\}$.

\begin{definition}[Realizer]
Let $f : \In (X, \delta_X) \mto (Y, \delta_Y)$ be a multi-valued function on represented spaces.
A function $F:\In\IN^\IN\to\IN^\IN$ is called a {\em realizer} of $f$, in symbols $F\vdash f$, if
$\delta_YF(p)\in f\delta_X(p)$ for all $p\in\dom(f\delta_X)$.
\end{definition}

Realizers allow us to transfer the notions of computability
and continuity and other notions available for Baire space to any represented space;
a function between represented spaces will be called {\em computable}, if it has a computable realizer, etc.
Now we can define Weihrauch reducibility.
By $\langle\;,\;\rangle:\Baire\times\Baire\to\Baire$ we denote the standard pairing function, defined by
$\langle p,q\rangle(2n):=p(n)$ and $\langle p,q\rangle(2n+1):=q(n)$ for all $p,q\in\Baire$ and $n\in\IN$.

\begin{definition}[Weihrauch reducibility]
Let $f,g$ be multi-valued functions on represented spaces. 
Then $f$ is said to be {\em Weihrauch reducible} to $g$, in symbols $f\leqW g$, if there are computable
functions $K,H:\In\IN^\IN\to\IN^\IN$ such that $K\langle \id, GH \rangle \vdash f$ for all $G \vdash g$.
Moreover, $f$ is said to be {\em strongly Weihrauch reducible} to $g$, in symbols $f\leqSW g$,
if there are computable functions $K,H$ such that $KGH\vdash f$ for all $G\vdash g$.
\end{definition}

The difference between ordinary and strong Weihrauch reducibility is that the ``output modifier'' $K$ has
direct access to the original input in case of ordinary Weihrauch reducibility, but not in case of strong Weihrauch reducibility. 
We note that the relations $\leqW$, $\leqSW$ and $\vdash$ implicitly refer to the underlying representations, which
we will only mention explicitly if necessary. It is known that these relations only depend on the underlying equivalence
classes of representations, but not on the specific representatives (see Lemma~2.11 in \cite{BG11}).
The relations $\leqW$ and $\leqSW$ are reflexive and transitive, thus they induce corresponding partial orders on the sets of 
their equivalence classes (which we refer to as {\em Weihrauch degrees} or {\em strong Weihrauch degrees}, respectively).
These partial orders will be denoted by $\leqW$ and $\leqSW$ as well. In this way one obtains a distributive bounded lattice for $\leqW$
which we call the {\em Weihrauch lattice} (for details see \cite{Pau09} and \cite{BG11}).
We use $\equivW$ and $\equivSW$ to denote the respective equivalences regarding $\leqW$ and $\leqSW$, 
and by $\lW$ and $\lSW$ we denote strict reducibility.

The Weihrauch lattice is equipped with a number of useful algebraic operations that we summarize in the next definition.
We use $X\times Y$ to denote the ordinary set-theoretic {\em product}, $X\sqcup Y:=(\{0\}\times X)\cup(\{1\}\times Y)$ in order
to denote {\em disjoint sums} or {\em coproducts}, by $\bigsqcup_{i=0}^\infty X_i:=\bigcup_{i=0}^\infty(\{i\}\times X_i)$ we denote the 
{\em infinite coproduct}. By $X^i$ we denote the $i$--fold product of a set $X$ with itself, where $X^0=\{()\}$ is some canonical computable singleton.
By $X^*:=\bigsqcup_{i=0}^\infty X^i$ we denote the set of all {\em finite sequences over $X$}
and by $X^\IN$ the set of all {\em infinite sequences over $X$}. 
All these constructions have parallel canonical constructions on representations and the corresponding representations
are denoted by $[\delta_X,\delta_Y]$ for the product of $(X,\delta_X)$ and $(Y,\delta_Y)$, $\delta_X\sqcup\delta_Y$
for the coproduct and $\delta^*_X$ for the representation of $X^*$ and $\delta_X^\IN$ for the representation
of $X^\IN$ (see \cite{BG11,Pau09,BBP12} for details). We will always assume that these canonical representations
are used, if not mentioned otherwise. 

\begin{definition}[Algebraic operations]
\label{def:algebraic-operations}
Let $f:\In X\mto Y$ and $g:\In Z\mto W$ be multi-valued functions on represented spaces. Then we define
the following operations:
\begin{enumerate}
\itemsep 0.2cm
\item $f\times g:\In X\times Z\mto Y\times W, (f\times g)(x,z):=f(x)\times g(z)$ \hfill (product)
\item $f\sqcap g:\In X\times Z\mto Y\sqcup W, (f\sqcap g)(x,z):=f(x)\sqcup g(z)$ \hfill (sum)
\item $f\sqcup g:\In X\sqcup Z\mto Y\sqcup W$, with $(f\sqcup g)(0,x):=\{0\}\times f(x)$ and\\
        $(f\sqcup g)(1,z):=\{1\}\times g(z)$ \hfill (coproduct)
\item $f^*:\In X^*\mto Y^*,f(i,x):=\{i\}\times f^i(x)$ \hfill (finite parallelization)
\item $\widehat{f}:\In X^\IN\mto Y^\IN,f(x_n):=\bigtimes_{i=0}^\infty f(x_i)$ \hfill (parallelization)
\end{enumerate}
\end{definition}

In this definition and in general we denote by $f^i:\In X^i\mto Y^i$ the $i$--th fold product
of the multi-valued map $f$ with itself. For $f^0$ we assume that $X^0:=\{()\}$ is a canonical
singleton for each set $X$ and hence $f^0$ is just the constant operation on that set.
It is known that $f\sqcap g$ is the {\em infimum} of $f$ and $g$ with respect to strong as well as
ordinary Weihrauch reducibility (see \cite{BG11}, where this operation was denoted by $f\oplus g$).
Correspondingly, $f\sqcup g$ is known to be the {\em supremum} of $f$ and $g$ with respect to $\leqW$ (see \cite{Pau09}).
The two operations $f\mapsto\widehat{f}$ and $f\mapsto f^*$ are known to be {\em closure operators}
in the corresponding lattices, which means
$f \leqW \widehat{f}$ and $\widehat{f} \equivW\,\widehat{\!\!\widehat{f}}$, and $f \leqW g$ implies $\widehat{f} \leqW \widehat{g}$
and analogously for finite parallelization (see \cite{BG11,Pau09}).

We use some terminology related to these algebraic operations. 
We say that $f$ is a {\em a cylinder} if $f\equivSW\id\times f$ where $\id:\Baire\to\Baire$ always
denotes the identity on Baire space, if not mentioned otherwise. 
Cylinders $f$ have the property that $g\leqW f$ is equivalent to $g\leqSW f$ (see \cite{BG11}).
We say that $f$ is {\em idempotent} if $f\equivW f\times f$.
We say that a multi-valued function on represented spaces is {\em pointed}, if it has a computable
point in its domain. For pointed $f$ and $g$ we obtain $f\sqcup g\leqSW f\times g$. 
If $f\sqcup g$ is idempotent, then we also obtain the inverse reduction. 
The finite parallelization $f^*$ can also be considered as {\em idempotent closure} as 
$f\equivW f^*$ holds if and only if $f$ is idempotent and pointed. 
We call $f$ {\em parallelizable} if $f\equivW\widehat{f}$ and it is easy to see that $\widehat{f}$ is always idempotent.
The properties of pointedness and idempotency are both preserved under
equivalence and hence they can be considered as properties of the respective degrees.

A particularly useful multi-valued function in the Weihrauch lattice is closed choice (see \cite{GM09,BG11,BG11a,BBP12})
and it is known that many notions of computability can be calibrated using the right version of choice. 
We will focus on closed choice for computable metric spaces, which are separable metric spaces
such that the distance function is computable on the given dense subset.
We assume that computable metric spaces are represented via their Cauchy representation
(see \cite{Wei00} for details).

By $\AA_-(X)$ we denote the set of closed subsets of a metric space $X$, where the index ``$-$'' indicates
that we work with negative information. These are given by the representation $\psi_-:\IN^\IN\to\AA_-(X)$, defined by
$\psi_-(p):=X\setminus\bigcup_{i=0}^\infty B_{p(i)}$,
where $B_n$ is some standard enumeration of the open balls of $X$ with center in the dense subset and rational radius.
The computable points in $\AA_-(X)$ are called {\em co-c.e.\ closed sets}.
We now define closed choice for the case of computable metric spaces.

\begin{definition}[Closed Choice]
Let $X$ be a computable metric space. Then the {\em closed choice} operation 
of this space $X$ is defined by
$\C_X:\In\AA_-(X)\mto X,A\mapsto A$
with $\dom(\C_X):=\{A\in\AA_-(X):A\not=\emptyset\}$.
\end{definition}

Intuitively, $\C_X$ takes as input a non-empty closed set in negative description (i.e., given by $\psi_-$) 
and it produces an arbitrary point of this set as output.
Hence, $A\mapsto A$ means that the multi-valued map $\C_X$ maps
the input $A\in\AA_-(X)$ to the set $A\In X$ as a set of possible outputs.
We mention a couple of properties of closed choice for specific spaces.

The omniscience principle $\LLPO$ has turned out to be very useful and it is closely related to
closed choice. We recall the definition.

\begin{definition}[Omniscience principle]
We define $\LLPO:\In\IN^\IN\mto\IN$ by
\[j\in\LLPO(p)\iff(\forall n\in\IN)\;p(2n+j)=0\]
for all $j\in\{0,1\}$, where
$\dom(\LLPO):=\{p\in\IN^\IN:p(k)\not=0$ for at most one $k\}$.
\end{definition}

It is easy to see that $\C_{\{0,1\}}\equivSW\LLPO$.
We mention that closed choice can also be used to characterize the computational content of many
theorems. By $\WKL$ we denote the straightforward formalization of Weak K\H{o}nig's Lemma. 
Since we will not use $\WKL$ in any formal sense here, we refer the reader to \cite{GM09,BG11} for precise definitions.

\begin{fact}[Weak K\H{o}nig's Lemma]
\label{fact:WKL}
$\WKL\equivSW\C_{\Cantor}\equivSW\C_{[0,1]^n}\equivSW\C_{[0,1]^\IN}\equivSW\widehat{\LLPO}$ for all $n\geq1$.
\end{fact}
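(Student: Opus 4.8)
The plan is to establish the chain of strong equivalences in Fact~\ref{fact:WKL} by going around the cycle
\[
\WKL \leqSW \C_{\Cantor} \leqSW \C_{[0,1]^n} \leqSW \widehat{\LLPO} \leqSW \WKL,
\]
with the middle reduction $\C_{[0,1]^n}\leqSW\C_{[0,1]}$ and $\C_{[0,1]}\leqSW\C_{[0,1]^n}$ handled separately (the latter being trivial by restricting to the first coordinate, which is a strong reduction since the output is just projected). First I would treat $\WKL\equivSW\C_{\Cantor}$. A binary tree $T\subseteq\Seqd$ with $[T]\neq\emptyset$ gives negative information on the closed set $[T]\subseteq\Cantor$: from the finite levels of $T$ one enumerates the cylinders disjoint from $[T]$, and conversely a name of $[T]$ as an element of $\AA_-(\Cantor)$ lets one compute the pruned subtree $\{w : [w]\cap[T]\neq\emptyset\}$ (equivalently, one recovers a tree with the same set of paths). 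Both directions are clearly computable and do not touch the output point, so this is a strong equivalence; picking a path through $T$ is exactly producing a point of $[T]$.

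Next I would handle $\C_{\Cantor}\leqSW\C_{[0,1]}$ and $\C_{[0,1]^n}\leqSW\C_{[0,1]}$. For the first, embed $\Cantor$ into $[0,1]$ via a computable homeomorphism onto a computable closed subset $E$ — the natural choice being $p\mapsto 2\sum_{k} p(k)3^{-k-1}$ with image the middle-thirds Cantor set. Given $A\in\AA_-(\Cantor)$ nonempty, its image under this embedding together with the (co-c.e.) complement of $E$ is a nonempty closed subset of $[0,1]$ whose negative information is computable from that of $A$; any point returned lies in the image of $A$ and its $\Cantor$-preimage is computable from it, again without re-reading the input, giving a strong reduction. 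For $\C_{[0,1]^n}\leqSW\C_{[0,1]}$ I would compose: there is a standard computable homeomorphism of $[0,1]^n$ onto a computable closed subset of $[0,1]$ (or, more economically, onto a computable closed subset of $\Cantor$, then use the previous step), and the same bookkeeping applies. The point to check carefully is that "negative information pushes forward along a computable embedding onto a computable closed set" — this is where a name of the complement of the range is needed and must be fed in as part of the oracle set, not read separately; since $E$ is fixed and computable, this is fine.

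Finally, for $\C_{[0,1]^n}\leqSW\widehat{\LLPO}$ and $\widehat{\LLPO}\leqSW\WKL$: the latter is immediate since $\LLPO\equivSW\C_{\{0,1\}}$ and $\widehat{\C_{\{0,1\}}}\leqSW\C_{\Cantor}\equivSW\WKL$ (a countable product of binary choices is choice on $\Cantor$, a strong reduction). For $\C_{[0,1]^n}\leqSW\widehat{\LLPO}$ one runs an interval-bisection argument in parallel along each coordinate: given negative information on nonempty closed $A\subseteq[0,1]^n$, at stage $m$ one has a dyadic subcube; using one instance of $\LLPO$ per coordinate one decides, consistently with the possibility $A\cap(\text{subcube})\neq\emptyset$, which half to keep, where the $\LLPO$-queries are built so that the ``bad'' answer is forbidden precisely when a half has been certified empty by the negative data. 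The subtlety — and the main obstacle — is the usual one for this kind of argument: the naive bisection can get stuck when $A$ meets only the shared boundary of the two halves and neither half is ever certified empty; the fix is to bisect with overlapping closed halves $[0,\tfrac12+2^{-m}]$ and $[\tfrac12-2^{-m},1]$ at stage $m$ so that nonemptiness is always preserved, at the cost of only finitely much shrinkage, and to note that the diameters still go to $0$. I expect most of the work in writing this out to be in verifying that this overlapping-bisection scheme yields a genuine single (parallel) use of $\widehat{\LLPO}$ — i.e.\ that all the binary decisions can be presented as one element of the domain of $\widehat{\LLPO}$ computed uniformly from the negative information — rather than an adaptive sequence of uses. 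Since the referenced literature (\cite{GM09,BG11,BG11a}) already contains these reductions, I would cite them for the routine verifications and only spell out the overlapping-interval trick.
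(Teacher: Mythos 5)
The paper states this as a Fact with no proof and cites it from the literature, so there is no in-paper argument to compare against; I will just assess your proposal on its own.

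Your high-level plan (a cycle of strong reductions) is the standard one and several links are fine: the tree/closed-set translation on $\Cantor$, the embedding argument for $\C_{\Cantor}\leqSW\C_{[0,1]}$, the trivial $\C_{[0,1]}\leqSW\C_{[0,1]^n}$, and $\widehat{\LLPO}\leqSW\WKL$. But two links have genuine problems.

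First, the claimed reduction $\C_{[0,1]^n}\leqSW\C_{[0,1]}$ via ``a standard computable homeomorphism of $[0,1]^n$ onto a computable closed subset of $[0,1]$ (or of $\Cantor$)'' is impossible: for $n\geq 2$, invariance of domain forbids any embedding of $[0,1]^n$ into $[0,1]$, and for $n\geq 1$, connectedness forbids an embedding into the totally disconnected $\Cantor$. The correct device runs in the opposite direction: fix a computable \emph{surjection} $s\colon\Cantor\to[0,1]^n$ (coordinatewise binary expansion), compute negative information on $s^{-1}(A)\In\Cantor$ from negative information on $A$, apply $\C_{\Cantor}$, and push the resulting point forward by $s$. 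Your displayed cycle does not actually need $\C_{[0,1]^n}\leqSW\C_{[0,1]}$ as a separate step, so this error is harmless to the architecture, but the claim as written is false.

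Second, and more seriously, the link $\C_{[0,1]^n}\leqSW\widehat{\LLPO}$ is the real substance of the Fact, and your sketch leaves exactly the hard part open while fixing a non-problem. If one bisects into the \emph{closed} halves $[0,\tfrac{1}{2}]$ and $[\tfrac{1}{2},1]$, then nonemptiness of $A$ already forces at least one half to meet $A$ (they share the point $\tfrac{1}{2}$), and for a compact half disjoint from $A$ this disjointness is eventually certified by the negative data; so the $\LLPO$-instance posed at a cube that genuinely contains a point of $A$ is always well-formed, with no overlap needed. The genuine obstacle, which you name at the end but do not resolve, is adaptivity: the $\LLPO$-instance you would pose at stage $m$ depends on which subcube the answer at stage $m-1$ selected, whereas a single call to $\widehat{\LLPO}$ must present all of its $\LLPO$-instances up front, computed from the input alone. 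Setting up an instance for every dyadic subcube does not repair this, because for a subcube disjoint from $A$ both of its halves are eventually certified dead and the corresponding sequence leaves $\dom(\LLPO)$. De-adaptivizing the bisection is precisely where the work is; deferring it to ``the referenced literature'' is a reasonable policy, but the overlapping-interval trick you offer is not the missing idea.

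A minor point: the ``pruned subtree $\{w:[w]\cap[T]\neq\emptyset\}$'' is \emph{not} computable from a $\psi_-$-name of $[T]$, since $[w]\cap[T]\neq\emptyset$ is a $\Pi^0_1$ condition in that name. Your parenthetical fallback (recover \emph{some} tree with the same set of infinite paths) is the correct claim: put $w$ in the tree iff $[w]$ is not covered by the first $|w|$ cylinders listed in the negative information; this is computable, prefix-closed, and has body $[T]$.
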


\section{Closed Sets and Trees of Rational Complexes}
\label{sec:closed-sets-trees}

In this section, we want to describe a representation of closed sets $A\In[0,1]^n$ that is useful for the study of connectedness. 
It is well-known that closed subsets of Cantor space can be characterized exactly as sets of infinite paths of trees (see for instance \cite{CR98}).
We describe a similar representation of closed subsets of the unit cube $[0,1]^n$ of the Euclidean space.
While in the case of Cantor space clopen balls are associated to each node of the tree, we now associate
finite complexes of rational balls to each node. While infinite paths lead to points of the closed set in case of Cantor space,
they now lead to connectedness components (which can be seen as a generalization, since the connectedness components in
Cantor space are singletons).

This representation of closed subsets $A\In[0,1]^n$ of the unit cube will enable us to analyze the relation between connected
choice and the Brouwer Fixed Point Theorem in the next section. In this section we will use this representation
in order to prove the result that finding a connectedness component of a closed set $A$ is computationally exactly as difficult
as Weak K\H{o}nig's Lemma. 

We first fix some topological terminology that we are going to use.
We work with the {\em maximum norm} $||\;||$ on $\IR^n$, defined by $||(x_1,...,x_n)||:=\max\{|x_i|:i=1,...,n\}$.
By $d(x,A):=\inf_{a\in A}||x-a||$ we denote the {\em distance} of $x\in\IR^n$ to $A\In\IR^n$.
By $d_A:\IR^n\to\IR$ we denote the corresponding {\em distance function} given by $d_A(x):=d(x,A)$.
By $B(x,r):=\{y\in\IR^n:||x-y||<r\}$ we denote the {\em open ball} with center $x$ and radius $r$ and by
$B[x,r]:=\{y\in\IR^n:||x-y||\leq r\}$ the corresponding {\em closed ball}.
Since we are using the maximum norm, all these balls are open or closed cubes, respectively (if the radius is positive).
By $\partial A$ we denote the topological {\em boundary},
by $\overline{A}$ the {\em closure} and by $A^\circ$ the {\em interior} of a set $A$.
If the underlying space $X$ is clear from the context, then $A^{\rm c}:=X\setminus A$ denotes the {\em complement} of $A$.

We are now prepared to define rational complexes, which are just finite sets of rational closed balls whose union is connected
and that pairwise intersect at most on their boundary. 

\begin{definition}[Rational complex]
We call a set $R:=\{B[c_1,r_1],...,B[c_k,r_k]\}$ of finitely many
closed balls $B[c_i,r_i]$ with rational center $c_i\in\IQ^n$ and 
positive rational radius $r_i\in\IQ$ an {\em ($n$--dimensional) rational complex}
if $\bigcup R$ is connected and $B_1,B_2\in R$ with $B_1\not=B_2$ 
implies $B_1^\circ\cap B_2^\circ=\emptyset$.
We say that a rational complex is {\em non-empty}, if $\bigcup R\not=\emptyset$.
By $\CQ^n$ we denote the set of $n$--dimensional rational complexes.
\end{definition}

Each rational complex $R$ can be represented by a list of the corresponding
rational numbers $c_1,r_1,...,c_k,r_k$ and we implicitly assume in the following
that this representation is used for the set of rational complexes $\CQ^n$.

In order to organize the rational complexes that are used to approximate sets
it is suitable to use trees.
We recall that a {\em tree} is a set $T\In\IN^*$ that is closed under prefix,
i.e., $u\prefix v$ and $v\in T$ implies $u\in T$.
A function $b:\IN\to\IN$ is called a {\em bound} of a tree $T$ if $w\in T$ implies $w(i)\leq b(i)$ for all $i=0,...,|w|-1$,
where $|w|$ denotes the {\em length} of the word $w$. A tree is called {\em finitely branching}, if it has a bound.
A tree of rational complexes is understood to be a finitely branching tree $T$ (together with a bound) such that to each node
of the tree a rational complex is associated, with the property that these complexes are compactly included in each
other if we proceed along paths of the tree and they are disjoint on any particular level of the tree.
We write $A\Subset B$ for two sets $A,B\In\IR^n$ if the closure $\overline{A}$ of $A$ is included in the interior $B^\circ$ of $B$
and we say that $A$ is {\em compactly included} in $B$ in this case.

\begin{definition}[Tree of rational complexes]
We call $(T,f)$ a {\em tree of rational complexes} if
$T\In\IN^*$ is a finitely branching tree and $f:T\to\CQ^n$ is
a function such that for all $u,v\in T$ with $u\not=v$
\begin{enumerate}
\item $u\prefix v\TO\bigcup f(v)\Subset\bigcup f(u)$,
\item $|u|=|v|\TO\bigcup f(u)\cap\bigcup f(v)=\emptyset$.
\end{enumerate}
\end{definition}

In the following we assume that finitely branching trees $T$ are represented as a pair $(\chi_T,b)$, where
$\chi_T:\IN^*\to\{0,1\}$ is the characteristic function of $T$ and $b:\IN\to\IN$ is a bound of $T$.
Correspondingly, trees $(T,f)$ of rational complexes are then represented in a canonical way by $(\chi_T,b,f)$.
We now define which set $A\In[0,1]^n$ is represented by such a tree $(T,f)$ of rational complexes. 

\begin{definition}[Closed sets and trees of rational complexes]
We say that a closed set $A\In\IR^n$ is {\em represented} by a tree $(T,f)$ of $n$--dimensional rational complexes
if one obtains $A=\bigcap_{i=0}^\infty\bigcup_{w\in T\cap\IN^i}\bigcup f(w)$.
\end{definition}

It is clear that in this way any tree $(T,f)$ of rational complexes actually represents a compact set $A$.
This is because $\bigcup f(w)$ is compact for each $w\in T$ and since $T$ is finitely branching, the set $T\cap\IN^i$
is finite for each $i$, hence $\bigcup_{w\in T\cap\IN^i}\bigcup f(w)$ is compact and hence $A$ is compact too.
Vice versa, every compact set $A\In\IR^n$ can be represented by a tree of $n$--dimensional rational complexes.
For $[0,1]^n$ we prove the uniform result that even the map $(T,f)\mapsto A$ is computable and has a computable multi-valued 
right inverse. We assume that trees of rational complexes are represented as specified above and closed sets $A$
are represented as points in $\AA_-([0,1]^n)$. 
We recall that a {\em connectedness component} of a set $A$ is a connected subset of $A$ that is not included in any larger connected
subset of $A$. Any connectedness component of a subset $A$ is closed in $A$. If $A=\emptyset$, then the only connectedness
component is the empty set, otherwise connectedness components are always non-empty.

\begin{proposition}[Closed sets and complexes]
\label{prop:complexes}
Let $n\geq1$.
The map $(T,f)\mapsto A$ that maps every tree of $n$--dimensional rational complexes $(T,f)$ to the closed set $A\In[0,1]^n$ represented by it,
is computable and has a multi-valued computable right inverse.
An analogous result holds restricted to infinite trees of non-empty rational complexes and non-empty closed $A$.
\end{proposition}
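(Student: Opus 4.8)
The plan is to establish the two directions separately. For the forward direction, that $(T,f)\mapsto A$ is computable, I would show that from a name $(\chi_T,b,f)$ one can enumerate open balls exhausting $[0,1]^n\setminus A$. Given the definition $A=\bigcap_{i}\bigcup_{w\in T\cap\IN^i}\bigcup f(w)$, a point $x$ is \emph{not} in $A$ iff there is some level $i$ with $x\notin\bigcup_{w\in T\cap\IN^i}\bigcup f(w)$. Since $T$ is finitely branching with bound $b$, the level set $T\cap\IN^i$ is finite and can be computed from $(\chi_T,b)$, and each $\bigcup f(w)$ is a finite union of rational closed cubes; hence $[0,1]^n\setminus\bigcup_{w\in T\cap\IN^i}\bigcup f(w)$ is an open set whose rational sub-balls can be enumerated effectively. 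Taking the union over all levels $i$ gives a $\psi_-$-name for $A$. One small point to check is that this really produces \emph{all} of the complement and not just part of it: here property~(1) of the tree (the compact-inclusion $\bigcup f(v)\Subset\bigcup f(u)$ along paths) is what guarantees that the nested sequence of compact sets shrinks down to exactly $A$, so that every point outside $A$ is eventually excluded at some finite level, with room to spare for a rational ball around it.

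For the backward direction — producing a computable multi-valued right inverse — I would, given a $\psi_-$-name of a non-empty (or arbitrary) closed $A\In[0,1]^n$, build the tree level by level. At level $i$ I want a finite collection of pairwise-disjoint rational complexes whose union $U_i$ satisfies $A\In U_i$, each cube of $U_i$ has diameter at most $2^{-i}$ (say), and $U_{i+1}\Subset U_i$. The natural approach: dyadically subdivide $[0,1]^n$ into cubes of side $2^{-i-c}$ for a suitable slack constant $c$, and keep a cube (slightly shrunk, or rather: keep the slightly-enlarged cube) only if it has not yet been certified to be outside $A$ by the negative information read so far. Using a bit of slack between the ``outer'' cubes forming the tree and the ``inner'' test against the complement enumeration, one arranges $U_{i+1}\Subset U_i$ as required by property~(1), and one can also take the cubes on a given level pairwise disjoint (e.g.\ by using half-open representatives for the bookkeeping while the complex itself consists of the closed cubes, intersecting only on boundaries within a complex). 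The cubes kept at level $i$ then have to be grouped into connectedness components of their union, and each component, being a finite connected union of rational closed cubes meeting only on boundaries, is exactly a rational complex in $\CQ^n$; these components are the values $f(w)$ for the nodes $w$ at level $i$, with the tree edges recording which level-$(i+1)$ component sits inside which level-$i$ component. Property~(2) (disjointness on a level) is immediate from the construction, and $\bigcap_i U_i=A$ holds because every point of $A$ survives forever while every point outside $A$ is eventually dropped.

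The main obstacle I expect is the \emph{bookkeeping around the $\Subset$ condition combined with disjointness and connectedness simultaneously}: one must choose the subdivision scale, the enlargement/shrinkage slack, and the timing of reading the negative information so that (a) no point of $A$ is ever discarded, (b) consecutive levels are compactly nested, (c) cubes on one level are genuinely disjoint rather than merely non-overlapping, and (d) the surviving cubes are grouped correctly into connected rational complexes with a finitely-branching tree structure and a computable bound. None of these is individually hard, but getting a single uniform construction that satisfies all of them at once — and in particular computing the bound $b$ on the branching, which requires an a~priori estimate on how many components of side-$2^{-i-c}$ cubes can appear — is where the care goes. The variant for infinite trees of non-empty complexes follows by pruning: when $A\neq\emptyset$ one never runs out of surviving cubes, and one deletes any node whose associated complex is empty (its subtree is then empty too), which keeps the tree infinite and all complexes non-empty while not changing the represented set.
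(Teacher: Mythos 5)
Your forward direction is essentially the paper's: compute the level sets $C_i=\bigcup_{w\in T\cap\IN^i}\bigcup f(w)$ from $(\chi_T,b,f)$ and intersect, yielding a $\psi_-$-name of $A$. (The appeal to property~(1) there is superfluous — $A=\bigcap_i C_i$ by definition, so $[0,1]^n\setminus A=\bigcup_i C_i^{\rm c}$ is enumerable without it — but harmless.)

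For the right inverse you take a genuinely different route: a bottom-up dyadic subdivision, keeping at level $i$ those small cubes not yet ruled out by the negative information, then grouping survivors into connected components. The paper instead works top-down: it starts from a single large cube, and at each step it (i)~shaves off a boundary stripe of explicit width $2^{-i-1}$ and (ii)~removes the shrunk balls $B(c_j,r_j-2^{-i})$ for $j\leq i$, then splits into components. You correctly identify that the crux is arranging $\bigcup f(v)\Subset\bigcup f(u)$ simultaneously with disjoint interiors and connectedness — but your ``bit of slack'' resolution is not actually pinned down, and as written it is caught between two incompatible options. If the complex at level $i$ consists of the closed dyadic cubes (intersecting only on boundaries, as you suggest for property~(2)), then a surviving level-$(i+1)$ cube sharing a face with $\partial U_i$ makes $U_{i+1}\Subset U_i$ fail. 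If instead you put the \emph{enlarged} cubes into the complex, their interiors overlap and you no longer have a rational complex. The way out — which the paper takes and which you would need too — is to discard an explicit boundary margin at each level and to prove a quantitative invariant showing this never eats into $A$: in the paper, $d(x,\partial A_i)\geq 2^{-i}$ for every $x\in A$, which is exactly why removing a $2^{-i-1}$ stripe and shrinking the enumerated balls by $2^{-i}$ is safe. Your sketch has no such invariant, so one cannot yet see that $\bigcap_i U_i\supseteq A$ after you start discarding boundary cubes. A second, smaller point: you subdivide $[0,1]^n$ itself, but then the root complex cannot compactly contain an $A$ that touches $\partial[0,1]^n$; the paper sidesteps this by working inside the larger cube $[-1,2]^n$ (equivalently, your cubes must be allowed to stick out of the unit cube). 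The remaining concerns you raise — computing the bound $b$, and the pruning argument for the non-empty variant — are indeed routine once the above is fixed.
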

\begin{proof}
It is clear that, given $(T,f)$ and a bound $b$ of $T$ we can actually compute $A\in\AA_-([0,1]^n)$.
Firstly, we can explicitly determine all finitely many $w\in T\cap\IN^i$ using the bound $b$
and compute $C_i:=\bigcup_{w\in T\cap\IN^i}\bigcup f(w)\in\AA_-([0,1]^n)$ for each $i$.
Since intersection of sequences of closed sets is computable on $\AA_-([0,1]^n)$, we can also compute $A:=\bigcap_{i=0}^\infty C_i$.

We note that if $(T,f)$ is an infinite tree of non-empty rational complexes then the $C_i$ form a decreasing chain of non-empty compact sets
and hence $A=\bigcap_{i=0}^\infty C_i$ is non-empty too by Cantor's Intersection Theorem.

For the other direction, let us assume that $A\In[0,1]^n$ is given as the complement of a union of rational open balls $B(c_i,r_i)$.
We use the larger cube $Q:=[-1,2]^n$ and we assume that $A=Q\cap(\bigcup_{i=0}^\infty B(c_i,r_i))^\cc$ with $B(c_i,r_i)\cap Q\not=\emptyset$ for all $i$.
Now we show how we can compute a tree $(T,f)$ of rational complexes together with a bound $b$ that represents $A$.
We proceed inductively over the length $i=|w|=0,1,2,...$ of words in the tree $T$.

We start with the empty node $\varepsilon\in T$ and we assign $f(\varepsilon)=\{Q\}$ to it.
Let us now assume that $T\cap\IN^i$ has been completely determined, $f(w)$ has been fixed for all $w\in T\cap\IN^i$
and $b(j)$ has been determined for all $j<i$. We now determine $T\cap\IN^{i+1}$, $f(w)$ for words $w\in T\cap\IN^{i+1}$,
and $b(i)$. The following is applied to each $w\in T\cap\IN^i$:
\begin{enumerate}
\item Firstly, we copy each rational complex $f(w)$ into $f(w0)$.
\item Then the points $B:=\{x:d(x,\partial\bigcup f(w))<2^{-i-1}\}$ which are close to the boundary are removed from $\bigcup f(w0)$.
         That is $f(w0)$ is refined such that the resulting union is the original one minus $B$ and all new balls in $f(w0)$ intersect at most on their boundaries.
        This guarantees $\bigcup f(w0)\Subset\bigcup f(w)$ (but it might destroy the property that $\bigcup f(w0)$ is connected).
\item In the next step $U:=\bigcup_{j=0}^i B(c_j,r_j-2^{-i})$ is removed from $\bigcup f(w0)$. This means that $f(w0)$ is refined
        such that the union is the original union minus $U$ and all new balls in $f(w0)$ intersect at most on their boundaries.
        This guarantees that the tree of rational complexes will eventually represent $A$ (we subtract $2^{-i}$ from the radius here in order to ensure
        that there is enough space for removing the boundary stripe $B$ in the next step (2) of the induction without removing anything of $A$).
\item Now $\bigcup f(w0)$ is not necessarily connected, but it has only finitely many connectedness components $C_0,...,C_k$
        that can all be explicitly determined as rational complexes. We copy these rational complexes into $f(w0),...,f(wk)$
        such that $\bigcup f(wj)=C_j$ for $j=0,...,k$ afterwards.
        Then all the $\bigcup f(wj)$ are pairwise disjoint and $\bigcup f(wj)\Subset\bigcup f(w)$ for all $j=0,...,k$.
        Should the only connectedness component $C_0$ be the empty set, then we stop the tree $T$ at this point and add no words $wj$ to it.
\end{enumerate}
After this procedure has been completed for all finitely many $w\in T\cap\IN^i$, we choose $b(i)$ as the maximal number
$k$ (of connectedness components) that occurred for any of these words $w$. It is clear that then $v(i)\leq b(i)$ for all $v\in T\cap\IN^{i+1}$. 
Moreover, we also have $\bigcup f(wj)\cap\bigcup f(vl)=\emptyset$ for all $w,v\in\IN^{i+1}$ with $v\not=w$ since 
$\bigcup f(w)\cap\bigcup f(v)=\emptyset$ and $\bigcup f(wj)\Subset\bigcup f(w)$ and $\bigcup f(vl)\Subset\bigcup f(v)$.

Altogether, $(T,f)$ as constructed here is a tree of rational complexes with bound $b$.
We still need to prove that the set $A_{(T,f)}$ represented by $(T,f)$ is actually $A$.
Let us denote by $A_i:=\bigcup_{w\in T\cap\IN^i}\bigcup f(w)$ the closed set represented by the union of all the complexes
of height $i$. In particular $A_{(T,f)}=\bigcap_{i\in\IN}A_i$. 
If $x\in Q\setminus A$, then there are some $i,j$ such that $x\in B(c_j,r_j-2^{-i})$ and hence $x$ is removed from all the complexes
of height $i$ of the tree in step (3) above. Hence $x\not\in A_i$, which implies $A_{(T,f)}\In A$.
Let, on the other hand, $x\in A$. Then clearly $x\in A_0=Q$ and has distance from $\partial A_0$ at least $1$.
By induction one can show that for each $i$ the distance $d(x,\partial A_i)$ is at least $2^{-i}$ and hence $x$ cannot 
be removed in step (2) (and also not in step (3) since only points outside $A$ are removed there).  
This induction shows that $x\in A_i$ for all $i$ and hence $x\in A_{(T,f)}$.
Altogether we have proved $A=A_{(T,f)}$.

We note that if $A$ is a non-empty set, then there is always at least one non-empty connectedness component $C_0$ in step (4) of the 
algorithm and the computed tree is automatically an infinite tree of non-empty rational complexes. If $A$ is the empty set,
then the computed tree is a finite tree of non-empty rational complexes.
\end{proof}

We note that this proof in particular shows that we can restrict the investigation in general to trees of non-empty rational complexes
(even if we want to include the empty closed set).
The previous result has a lot of interesting applications. For instance, if $A$ is represented by $(T,f)$, then the sets 
$A_i:=\bigcup_{w\in T\cap\IN^i}\bigcup f(w)$ of height $i$ used in the previous proof are of very special form.
They are finite unions of connected sets that are themselves finite unions of rational closed balls.
It is easy to see that for a co-c.e.\ closed $A$ the resulting sequence $(A_i)_{i\in\IN}$ is automatically
a computable sequence of {\em bi-computable} sets $A_i$, which means that the sequences $(d_{A_i})_{i\in\IN}$
and $(d_{A_i^\cc})_{i\in\IN}$ of characteristic functions are computable (see \cite{Her99b} for more information on bi-computable sets).
This is because the maps $R\mapsto d_{\bigcup R}$ and and $R\mapsto d_{(\bigcup R)^\cc}$ of type $\CQ^n\to\CC(\IR^n,\IR)$
are easily seen to be computable. 
This leads to the following corollary.

\begin{corollary}
\label{cor:outer-approximation}
For every non-empty co-c.e.\ closed set $A\In[0,1]^n$ there is a computable sequence $(A_i)_{i\in\IN}$ of bi-computable compact sets $A_i\In[-1,2]^n$
that is compactly decreasing, i.e., $A_{i+1}\Subset A_i$ for all $i\in\IN$ and such that $A=\bigcap_{i\in\IN}A_i$.
\end{corollary}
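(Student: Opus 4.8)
The plan is to read the statement off Proposition~\ref{prop:complexes} together with the discussion immediately preceding the corollary. First I would apply the right-inverse part of Proposition~\ref{prop:complexes}, in the restricted form for non-empty sets, to the given non-empty co-c.e.\ closed set $A\In[0,1]^n$. This yields, computably from a name of $A$ in $\AA_-([0,1]^n)$, an infinite tree $(T,f)$ of non-empty $n$-dimensional rational complexes, together with a bound $b$, that represents $A$. Inspecting the construction in that proof, the root $\varepsilon$ carries the complex $\{Q\}$ with $Q=[-1,2]^n$, so by condition~(1) in the definition of a tree of rational complexes every $\bigcup f(w)$ satisfies $\bigcup f(w)\Subset Q$; in particular everything below lives inside $[-1,2]^n$. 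I would then put $A_i:=\bigcup_{w\in T\cap\IN^i}\bigcup f(w)$, i.e.\ the ``sets of height $i$'' already used in the proof of the proposition. Using $b$ we can compute the finite set $T\cap\IN^i$, hence a finite list of the rational closed balls constituting $A_i$, uniformly in $i$; so $(A_i)_{i\in\IN}$ is a computable sequence of compact subsets of $[-1,2]^n$, and $A=\bigcap_{i\in\IN}A_i$ is exactly the assertion that $(T,f)$ represents $A$.

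For the compactly decreasing property I would argue levelwise: given $w\in T\cap\IN^{i+1}$ with immediate predecessor $v\in T\cap\IN^i$, condition~(1) gives $\bigcup f(w)\Subset\bigcup f(v)\In A_i$, so $\bigcup f(w)\In A_i^\circ$ since $\bigcup f(w)$ is closed and the interior is monotone. Taking the union over the finitely many $w\in T\cap\IN^{i+1}$ yields $A_{i+1}\In A_i^\circ$, and as $A_{i+1}$ is closed this is precisely $A_{i+1}\Subset A_i$.

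It remains to check that the $A_i$ are uniformly bi-computable, i.e.\ that $(d_{A_i})_{i\in\IN}$ and $(d_{A_i^\cc})_{i\in\IN}$ are computable sequences of functions. For the first this is immediate from $d_{A_i}=\min_{w\in T\cap\IN^i}d_{\bigcup f(w)}$ and the computability of $R\mapsto d_{\bigcup R}$ on $\CQ^n$. For the second I would establish the pointwise identity $d_{A_i^\cc}=\max_{w\in T\cap\IN^i}d_{(\bigcup f(w))^\cc}$ and then invoke computability of $R\mapsto d_{(\bigcup R)^\cc}$ on $\CQ^n$. The inequality ``$\geq$'' is clear from $A_i^\cc\In(\bigcup f(w))^\cc$; for ``$\leq$'' one uses condition~(2) of the tree, which makes the complexes $\bigcup f(w)$ with $|w|=i$ pairwise disjoint compact sets, hence at mutual positive distance, so that for $x\in\bigcup f(w_0)$ the point of $(\bigcup f(w_0))^\cc$ nearest to $x$ lies on $\partial\bigcup f(w_0)$ and is approximated by points lying outside \emph{every} $\bigcup f(w)$, i.e.\ by points of $A_i^\cc$; for $x\notin A_i$ both sides are $0$. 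This identity, and its dependence on the levelwise disjointness, is the only spot that needs a genuine (though short) argument — everything else is bookkeeping layered on top of Proposition~\ref{prop:complexes}.
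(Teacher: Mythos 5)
Your proposal is correct and follows the same approach the paper leaves implicit in the paragraph preceding the corollary: take $A_i$ to be the union of the level-$i$ complexes of a tree $(T,f)$ of rational complexes representing $A$ (as in Proposition~\ref{prop:complexes}), read off $A=\bigcap A_i$ and $A_{i+1}\Subset A_i$ from the tree definition, and obtain bi-computability from the computability of $R\mapsto d_{\bigcup R}$ and $R\mapsto d_{(\bigcup R)^\cc}$ on $\CQ^n$. Your explicit verification of the identity $d_{A_i^\cc}=\max_{w\in T\cap\IN^i}d_{(\bigcup f(w))^\cc}$ via levelwise disjointness is exactly the step the paper leaves tacit, and it is carried out correctly.
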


The representation of closed sets $A\In[0,1]^n$ by trees of rational complexes also has the advantage that connectedness components
of $A$ can easily be expressed in terms of the tree structure. This is made precise by the following lemma.
By $[T]:=\{p\in\IN^\IN:(\forall i)\;p|_i\in T\}$ we denote the set of {\em infinite paths} of $T$, which is also called the {\em body} of $T$.
Here $p|_i=p(0)...p(i-1)\in\IN^*$ denotes the {\em prefix} of $p$ of length $i$ for each $i\in\IN$.
According to the following lemma
there is bijection between $[T]$ and the set of connectedness components of a non-empty closed set $A\In[0,1]^n$.

\begin{lemma}[Connectedness components]
\label{lem:connected-component}
Let $(T,f)$ be an infinite tree of $n$--dimensional non-empty rational complexes and let $A\In[0,1]^n$ be the non-empty closed set represented by $(T,f)$.
Then the sets $C_p:=\bigcap_{i=0}^\infty\bigcup f(p|_i)$ for $p\in[T]$ are exactly all connectedness components of $A$
(without repetitions).
\end{lemma}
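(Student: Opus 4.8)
The plan is to establish the lemma in three stages: first that each $C_p$ is non-empty, connected, and contained in $A$; second that distinct paths yield distinct sets; and third that every connectedness component of $A$ arises as some $C_p$. For the first stage, fix $p \in [T]$. Since each $f(p|_i)$ is a non-empty rational complex, $\bigcup f(p|_i)$ is a non-empty compact connected set, and by property (1) of the definition of a tree of rational complexes these sets are compactly decreasing: $\bigcup f(p|_{i+1}) \Subset \bigcup f(p|_i)$. Hence $C_p = \bigcap_i \bigcup f(p|_i)$ is non-empty by Cantor's Intersection Theorem, and it is connected as a decreasing intersection of compact connected sets (a standard fact in a compact Hausdorff space). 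Finally $C_p \subseteq \bigcap_i A_i = A$, where $A_i = \bigcup_{w \in T \cap \IN^i} \bigcup f(w)$, because $\bigcup f(p|_i) \subseteq A_i$ for each $i$.

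For the second stage, suppose $p \neq q$ in $[T]$ and let $i$ be least with $p(i) \neq q(i)$. Then $p|_{i+1}$ and $q|_{i+1}$ are distinct words of the same length $i+1$ in $T$, so by property (2) their associated unions are disjoint: $\bigcup f(p|_{i+1}) \cap \bigcup f(q|_{i+1}) = \emptyset$. Since $C_p \subseteq \bigcup f(p|_{i+1})$ and $C_q \subseteq \bigcup f(q|_{i+1})$, we get $C_p \cap C_q = \emptyset$; in particular $C_p \neq C_q$ since both are non-empty. This also shows the $C_p$ are pairwise disjoint.

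For the third stage I would argue that the $C_p$ exhaust $A$ and that each is maximal connected. First, given $x \in A$, for each $i$ there is exactly one word $w_i \in T \cap \IN^i$ with $x \in \bigcup f(w_i)$ (existence because $x \in A_i$; uniqueness by property (2)); moreover property (1) forces $w_i \prefix w_{i+1}$, so the $w_i$ assemble into a path $p \in [T]$ with $x \in C_p$. Thus $A = \bigcup_{p \in [T]} C_p$, a partition of $A$ into non-empty connected sets. To conclude that each $C_p$ is actually a connectedness component — i.e.\ maximal connected — I would take a connected set $S \subseteq A$ meeting $C_p$ and show $S \subseteq C_p$: if $S \not\subseteq C_p$, then $S$ meets some $C_q$ with $q \neq p$, and then at the first level $i+1$ where $p$ and $q$ differ, the two disjoint open sets $(\bigcup f(p|_{i+1}))^\circ \supseteq$ ... — here one must be slightly careful, since the complexes need only be disjoint, not separated by open sets. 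The clean fix is to note that on level $i+1$ the finitely many sets $\bigcup f(v)$, $v \in T \cap \IN^{i+1}$, are compact and pairwise disjoint, hence a positive distance apart, so each is relatively clopen in $A_{i+1} \supseteq A \supseteq S$; thus $S \cap \bigcup f(p|_{i+1})$ is clopen in $S$, and since $S$ is connected and this set is non-empty (it contains a point of $C_p$), it equals $S$. Iterating over all $i$ gives $S \subseteq \bigcap_i \bigcup f(p|_i) = C_p$. Hence $C_p$ is maximal, i.e.\ a connectedness component, and conversely every connectedness component, being connected and meeting some $C_p$, is contained in and therefore equals $C_p$.

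The main obstacle is the maximality argument in the third stage: one must avoid the temptation to separate the distinct complexes by open sets of the ambient cube (they are only disjoint, possibly touching via the surrounding deleted region), and instead use that finitely many disjoint compact sets are automatically relatively clopen in their union — and then propagate this down every level of the path simultaneously. The non-emptiness and connectedness of $C_p$ (nested compact connected sets) and the disjointness bookkeeping are routine given Proposition~\ref{prop:complexes} and the definition of a tree of rational complexes.
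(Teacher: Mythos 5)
Your proof is correct and follows the same basic strategy as the paper's: realize $A$ as the union of the nested intersections $C_p$ along paths, show these are pairwise disjoint, non-empty, compact and connected, and conclude they are the components. You do, however, supply one substantive step that the paper's own proof leaves tacit: the verification that each $C_p$ is \emph{maximal} among connected subsets of $A$. A partition of $A$ into non-empty connected compact sets is not automatically the component partition (a partition of $[0,1]$ into singletons already shows this), so some argument is needed, and your observation that the finitely many pairwise disjoint compact sets at each level are a positive distance apart, hence relatively clopen in $A_i\supseteq A$, so that a connected $S\subseteq A$ meeting $C_p$ is forced into $\bigcup f(p|_i)$ for every $i$, is exactly the right ingredient. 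You also correctly anticipated and avoided the trap of trying to separate the level-$i$ complexes by open sets of the ambient cube. Two further minor deviations: you deduce directly from the level-wise uniqueness of $w_i$ that the $w_i$ are nested and hence determine a single path, whereas the paper extracts the path by invoking Weak K\H{o}nig's Lemma on the tree $T_x$ (unnecessary, since $T_x$ is a chain); and you explicitly note that $C_p\neq\emptyset$ by Cantor's Intersection Theorem, which the paper leaves implicit in the hypothesis that the tree is infinite with non-empty complexes.
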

\begin{proof}
Let $A\In[0,1]^n$ be represented by $(T,f)$. 
Firstly, it is clear that every set $C_p=\bigcap_{i=0}^\infty\bigcup f(p|_i)$ is included in $A$ for $p\in[T]$.
We claim that also $\bigcup_{p\in[T]}C_p=A$. If $x\in A$, then for every $i$ there is a unique $w_i\in T\cap\IN^i$ such that $x\in\bigcup f(w_i)$.
Since $w\prefix w_i$ and $w\not=w_i$ imply $\bigcup f(w_i)\In\bigcup f(w)$, it follows that $T_x:=\{w_i:i\in\IN\}$ constitutes
an infinite finitely branching tree and by Weak K\H{o}nig's Lemma this tree has an infinite path $p$ such that $x\in C_p$.
Now we claim that $\bigcup_{p\in[T]}C_p$ is even a pairwise disjoint union. 
Let $x\in C_p\cap C_q$ for $p,q\in[T]$ with $p\not=q$. Then there is an $i\in\IN$ such that $p|_i\not=q|_i$ and 
we have $x\in\bigcup f(p|_i)\cap\bigcup f(q|_i)$. This contradicts the fact that $(T,f)$ is a tree of rational complexes.
Hence, the union $\bigcup_{p\in[T]}C_p$ is a disjoint union.
By definition of a tree of rational complexes, $\bigcup f(p|_i)$ is connected and compact for 
every $i\in\IN$. It follows that $C_p$ is connected, since the intersection of a sequence of continua is a continuum (i.e., connected and compact, 
see for instance Corollary~6.1.19 in \cite{Eng89}).
Altogether, this proves the claim.
\end{proof}

As another interesting result we can deduce from Proposition~\ref{prop:complexes} a classification of the operation 
that determines a connectedness component. We first define this operation.
For brevity, we denote by $\AA_n$ the subspace of non-empty closed subsets of $\AA_-([0,1]^n)$.

\begin{definition}[Connectedness components]
By $\Con_n:\AA_n\mto\AA_n$ we denote the map with $\Con_n(A):=\{C:C$ is a connectedness component of $A\}$ for every $n\geq 1$.
\end{definition}

We note that the Weihrauch degree of Weak K\H{o}nig's Lemma has been defined and studied in \cite{GM09,BG11,BG11a,BBP12,BGM12}.
Here we prove that the problem $\Con_n$ of finding a connectedness component of a closed set has the
same strong Weihrauch degree as Weak K\H{o}nig's Lemma for every dimension $n\geq 1$.

\begin{theorem}[Connectedness components]
\label{thm:Con}
$\Con_n\equivSW\WKL$ for $n\geq 1$.
\end{theorem}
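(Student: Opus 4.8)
The plan is to prove the two reductions $\Con_n\leqSW\WKL$ and $\WKL\leqSW\Con_n$ separately, using the representation by trees of rational complexes from Proposition~\ref{prop:complexes} and the component description from Lemma~\ref{lem:connected-component} as the main technical tools. Recall $\WKL\equivSW\C_{\{0,1\}^\IN}$ by Fact~\ref{fact:WKL}, so it suffices to work with closed choice on Cantor space on one side.

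For the reduction $\Con_n\leqSW\WKL$, I would start from a non-empty closed set $A\In[0,1]^n$ given by negative information in $\AA_n$. Using the (multi-valued) right inverse from Proposition~\ref{prop:complexes} I first compute an infinite tree $(T,f)$ of non-empty $n$-dimensional rational complexes that represents $A$, together with a bound $b$. Since $T$ is an infinite finitely branching tree, a single application of Weak K\H{o}nig's Lemma (or rather of $\C_{\Cantor}$ after a computable re-coding of the bounded tree $T$ into a subtree of $2^{<\IN}$, with the bound used to pad branchings into binary) yields an infinite path $p\in[T]$. By Lemma~\ref{lem:connected-component} the set $C_p:=\bigcap_{i}\bigcup f(p|_i)$ is a connectedness component of $A$, and since at each level we are simply intersecting an explicitly given decreasing sequence of rational complexes, the map $p\mapsto C_p\in\AA_-([0,1]^n)$ is computable. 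Composing, we obtain a computable $H$ preparing the $\WKL$-instance and a computable $K$ reading off the output; because $K$ does not need the original input, this is a \emph{strong} Weihrauch reduction.

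For the reduction $\WKL\leqSW\Con_n$, it suffices to reduce $\C_{\Cantor}$. The idea is: given a non-empty co-c.e.\ closed $P\In\Cantor$, build a non-empty closed $A\In[0,1]^n$ whose connectedness components are in computable bijection (uniformly, strongly) with the infinite paths of the tree $T_P$ of $P$, so that producing any component of $A$ lets one read back an element of $P$. Concretely, one embeds the binary tree geometrically: to the node $w\in 2^{<\IN}$ of $T_P$ associate a tiny rational cube $Q_w\In[0,1]^n$ deep inside $Q_{w'}$ whenever $w'\prefix w$, with $Q_{w0}$ and $Q_{w1}$ disjoint, and connect $Q_w$ to its surviving children by thin rational ``tubes'' (in dimension $n\geq 1$ a one-dimensional arc of rational cubes suffices, laid out so that tubes for incomparable nodes stay disjoint — this is where the geometric bookkeeping lives). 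Pruning the construction to follow $T_P$ (which is co-c.e.\ available) produces a tree of rational complexes and hence, by Proposition~\ref{prop:complexes}, a closed set $A$ whose components are exactly the $C_p$ for $p\in[T_P]=P$; from $C_p$ (equivalently from any of its points) one computably recovers $p$ by tracking which cubes $C_p$ meets. Feeding $A$ to $\Con_n$ and post-composing with this decoding gives the strong reduction $\C_{\Cantor}\leqSW\Con_n$.

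The main obstacle is the geometric layout in the second reduction: one must route the connecting tubes for the infinitely many binary branchings so that, at every finite level, the complexes attached to distinct nodes are genuinely disjoint and the compact-inclusion condition $\bigcup f(v)\Subset\bigcup f(u)$ holds along paths, all while keeping everything rational and uniformly computable from the co-c.e.\ description of $P$. A clean way to handle this is to give each node $w$ of length $i$ a dedicated ``slot'' — a small rational box of side roughly $2^{-i}$ placed by a fixed computable rule (e.g.\ a dyadic address depending on $w$) — and to run all tubes for level-$i$ branchings strictly inside the union of the level-$i$ slots, shrinking by a definite factor at each step so that the nesting is compact; disjointness is then automatic from the disjointness of the slots, and connectivity of each $C_p$ follows from Lemma~\ref{lem:connected-component}. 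Everything else (uniformity of the codings, the strong-reduction bookkeeping, the appeal to Fact~\ref{fact:WKL}) is routine.
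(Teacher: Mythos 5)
Your first direction, $\Con_n\leqSW\WKL$, matches the paper's argument essentially step for step: compute a tree of rational complexes via Proposition~\ref{prop:complexes}, apply $\WKL$ to find an infinite path, and read off the component via Lemma~\ref{lem:connected-component}. The paper actually derives strong reducibility by noting that $\WKL$ is a cylinder rather than by arguing that $K$ ignores the input, but that is an inessential difference.

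Your second direction, $\WKL\leqSW\Con_n$, takes a genuinely different and considerably more involved route than the paper, and as written it has a gap. The paper's proof is much more economical: it fixes a standard computable embedding $\iota\colon\Cantor\to[0,1]$ with computable right inverse (a Cantor-set embedding), computes $\iota([T])\in\AA_1$ from the given tree $T$, and then observes that since $\iota(\Cantor)$ is \emph{totally disconnected}, every connectedness component of $\iota([T])$ is a singleton $\{x\}$; compactness of $[0,1]$ lets you extract $x$ from the $\psi_-$-name of $\{x\}$, and $\iota^{-1}(x)$ is the desired path. No tubes, no geometric layout, no bookkeeping. Higher dimensions follow by composing with the inclusion $[0,1]\hookrightarrow[0,1]^n$. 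Your construction instead lays out nested slot cubes $Q_w$ and \emph{connects each $Q_w$ to its surviving children by thin rational tubes}. That step is where the trouble lies: for $\Con_n$ you need the components of $A$ to correspond to the paths of $T$, so you want $A$ to be \emph{dis}connected along different branches. Tubes joining $Q_w$ to both $Q_{w0}$ and $Q_{w1}$ tend to merge what should be distinct components: either the tubes survive in the limit (in which case $A$ is connected, or at least has far fewer components than paths, and the decoding fails), or they vanish in the limit (in which case they did no work and you have reinvented the nested-cube Cantor-set construction the paper uses). You never resolve which regime you are in, and the ``main obstacle'' you flag — routing the tubes disjointly and maintaining compact nesting — is an obstacle you created by adding tubes in the first place. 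The tubes would be appropriate for a reduction to \emph{connected} choice $\ConC_n$, where you must hand over a single connected set; for $\Con_n$ they are counterproductive. Dropping them and using a totally disconnected Cantor-type image, as the paper does, makes the components singletons, makes decoding trivial, and eliminates all the geometric bookkeeping.
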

\begin{proof}
Given a set $A\In[0,1]^n$, we can compute a tree $(T,f)$ of rational complexes that represents $A$ (together with a bound $b$ of $T$).
With the help of Weak K\H{o}nig's Lemma $\WKL$ we can find an infinite path $p\in[T]$ of $T$ (since the bound $b$ is available).
Then $C=\bigcap_{i=0}^\infty\bigcup f(p|_i)$ is a connectedness component of $A$ by Lemma~\ref{lem:connected-component}
and given $T,f,p$ we can actually compute $C\in\AA_n$.
This proves $\Con_n\leqW\WKL$ and since $\WKL$ is a cylinder (see \cite{BG11}) this even implies $\Con_n\leqSW\WKL$.

For the other direction, $\WKL\leqSW\Con_1$ we use a standard computable embedding $\iota:\{0,1\}^\IN\to[0,1]$ of Cantor space 
into the unit interval with a computable right inverse. Given a tree $T$ with infinite paths we can compute the set 
$A=[T]\in\AA_-(\{0,1\}^\IN)$ of infinite paths and hence we can also compute $\iota(A)\in\AA_1$ (see \cite{BG09}). 
Using $\Con_1$ we obtain a connectedness component $C\in\AA_1$ of $\iota(A)$.
Since $\iota(\{0,1\}^\IN)$ is totally disconnected, any connectedness component $C$ of $\iota(A)$ is actually a singleton
and hence we can compute $x$ with $C=\{x\}$ (since $[0,1]$ is compact). Hence $p=\iota^{-1}(x)$ is an infinite path in $T$.
This proves $\WKL\leqSW\Con_1$ and the higher dimensional case can be treated analogously (using the canonical embedding of $[0,1]$ into $[0,1]^n$).
\end{proof}

In \cite{LZ08a}, Le Roux and Ziegler studied computability properties of closed sets and their connectedness components.
For instance, they prove that any co-c.e.\ closed set with only finitely many connectedness components has only co-c.e.\ closed
connectedness components and any co-c.e.\ closed set without co-c.e.\ closed connectedness components has continuum cardinality
many connectedness components. This can easily be deduced from the previous theorem as well as many other properties of connectedness components.
For instance, it is well known that there exists a computable tree with countably many infinite paths and a unique non-isolated infinite path that is
not even limit computable (see Theorem~2.18 in \cite{CDJS93}).
This implies the following result, which resolves the Open Question 4.10 in \cite{LZ08a}.

\begin{corollary}
There exists a non-empty co-c.e.\ closed set $A\In[0,1]$ with only countably many connectedness components
one of which is not co-c.e.\ closed (and it is not even the set of accumulation points of a computable sequence).
\end{corollary}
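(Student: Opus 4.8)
The plan is to transport the tree from Theorem~2.18 in \cite{CDJS93} into the unit interval using the computable embedding of Cantor space that was already employed in the proof of Theorem~\ref{thm:Con}, and then to read off the desired connectedness component as the singleton corresponding to the distinguished non-isolated path.

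First I would fix, by Theorem~2.18 in \cite{CDJS93}, a computable tree $T\In\{0,1\}^{<\IN}$ that has only countably many infinite paths, exactly one of which, say $p^*\in[T]$, is non-isolated, and such that $p^*$ is not limit computable. Since $T$ is computable, $[T]\In\{0,1\}^\IN$ is a non-empty co-c.e.\ closed set; if $\iota:\{0,1\}^\IN\to[0,1]$ denotes the standard computable embedding with computable right inverse $\iota^{-1}$ (as in the proof of Theorem~\ref{thm:Con}), then $A:=\iota([T])\In[0,1]$ is again a non-empty co-c.e.\ closed set (see \cite{BG09}). Because $\iota(\{0,1\}^\IN)$ is totally disconnected, the connectedness components of $A$ are exactly the singletons $\{\iota(p)\}$ with $p\in[T]$, and as $[T]$ is countable, $A$ has only countably many connectedness components.

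It then remains to establish the two negative properties of the component $\{\iota(p^*)\}$. For the first, I would argue that if $\{\iota(p^*)\}$ were co-c.e.\ closed, then $\iota(p^*)$ would be a computable real (the only point of a co-c.e.\ closed singleton in $[0,1]$ is computable, as is easily seen), and hence $p^*=\iota^{-1}(\iota(p^*))$ would be computable, contradicting the choice of $p^*$. For the second, if $\{\iota(p^*)\}$ were the set of accumulation points of a computable sequence $(x_k)_{k\in\IN}$ in $[0,1]$, then, since a bounded real sequence with a single accumulation point converges to it, we would have $\iota(p^*)=\lim_{k\to\infty}x_k$, so that $\iota(p^*)$ is a limit computable real; applying the computable map $\iota^{-1}$ would then make $p^*$ limit computable, which is again a contradiction.

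I do not expect a genuine obstacle here, since the construction is essentially the tree-of-complexes picture specialized to the degenerate totally disconnected case. The only points requiring a little care are the standard auxiliary facts invoked above: that a co-c.e.\ closed singleton in $[0,1]$ has a computable point, and that a bounded computable sequence possessing a unique accumulation point converges to it (so that this accumulation point, being the limit of a computable sequence, is limit computable).
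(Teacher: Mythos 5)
Your proposal is correct and follows essentially the same route the paper intends: push the Cencov--Downey--Jockusch--Shore tree through the standard computable embedding $\iota$ of Cantor space into $[0,1]$, observe that the components of the image are exactly the singletons $\{\iota(p)\}$ for $p\in[T]$, and derive the non-(co-c.e.\ closedness) and non-(limit computability) of $\{\iota(p^*)\}$ from the corresponding properties of $p^*$. The only cosmetic difference is that for the parenthetical claim you argue directly from the convergence of a bounded computable sequence with a unique accumulation point, whereas the paper would invoke the cited general equivalence between ``set of accumulation points of a computable sequence'' and ``co-c.e.\ closed in the halting problem''; both yield the same conclusion.
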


We mention that a closed set is the set of accumulation points of a computable sequence if and only if it has a limit
computable name (i.e., if it is co-c.e.\ closed in the halting problem, see \cite{LZ08a,BGM12}).
Another consequence of Lemma~\ref{lem:connected-component} using the Low Basis Theorem (see \cite{Soa87}) 
is that every co-c.e.\ closed set has a low connectedness component in the sense that it is low in the space $\AA_-([0,1]^n)$.
We describe this result in the special case of the representation of closed sets considered here.

\begin{corollary}
Let $A\In[0,1]^n$ be co-c.e.\ closed. Then there is a computable sequence $(A_i)_{i\in\IN}$ of bi-computable closed
sets $A_i\In[0,1]^n$ and a low $p\in\IN^\IN$ such that
$\bigcap_{i=0}^\infty A_{p(i)}$ is a connectedness component of $A$ (which is then, in particular, the set of accumulation 
points of a computable sequence).
\end{corollary}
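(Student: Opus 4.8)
The plan is to combine Proposition~\ref{prop:complexes}, Lemma~\ref{lem:connected-component} and the classical Low Basis Theorem. We may assume that $A$ is non-empty, since for $A=\emptyset$ one can take $A_i:=\emptyset$ for all $i$ together with any computable $p$. Because $A$ is co-c.e.\ closed, Proposition~\ref{prop:complexes} lets us compute an infinite tree $(T,f)$ of non-empty $n$--dimensional rational complexes, together with a bound $b$ of $T$, that represents $A$. Fix a computable bijection $\langle\,\cdot\,\rangle:\IN^*\to\IN$ and define $A_{\langle w\rangle}:=\bigl(\bigcup f(w)\bigr)\cap[0,1]^n$ for $w\in T$ and $A_k:=[0,1]^n$ for all remaining indices $k$. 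Every $A_k$ is a finite union of rational boxes, so -- by the same reasoning as in the discussion preceding Corollary~\ref{cor:outer-approximation}, using that $R\mapsto d_{\bigcup R}$ and $R\mapsto d_{(\bigcup R)^\cc}$ are computable on $\CQ^n$ -- the sequence $(A_k)_{k\in\IN}$ is a computable sequence of bi-computable closed subsets of $[0,1]^n$.

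Next I would extract a low index sequence. As $A$ is non-empty, $T$ is infinite, and by the Low Basis Theorem (see \cite{Soa87}) -- applied, as usual, after computably identifying the body $[T]$ with a non-empty co-c.e.\ closed subset of Cantor space via the bound $b$ -- there is a low infinite path $q\in[T]$. Put $p(i):=\langle q|_i\rangle$ for all $i\in\IN$. Since $b$, and hence the characteristic function of $T$ and the coding $\langle\,\cdot\,\rangle$, is computable, $p$ is computable from $q$, so $p\leqT q$; and because lowness is closed downwards under $\leqT$ -- if $p\leqT q$ then $p'\leqT q'\leqT\emptyset'$ -- the sequence $p$ is low as well.

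It remains to verify the assertion. Since $q|_i\in T$ for every $i$, we have $A_{p(i)}=\bigl(\bigcup f(q|_i)\bigr)\cap[0,1]^n$, whence
\[\bigcap_{i=0}^\infty A_{p(i)}=\Bigl(\bigcap_{i=0}^\infty\bigcup f(q|_i)\Bigr)\cap[0,1]^n=C_q,\]
where $C_q$ is the set from Lemma~\ref{lem:connected-component} and the last equality uses $C_q\In A\In[0,1]^n$; by that lemma $C_q$ is a connectedness component of $A$. Moreover a low $q$ is limit computable (it is Turing below its own jump, which in turn is computable in $\emptyset'$), hence so is $p$; since $(A_k)_k$ is a computable sequence of co-c.e.\ closed sets, $\bigcap_{i}A_{p(i)}$ then has a name computable in $\emptyset'$, i.e.\ it is co-c.e.\ closed relative to the halting problem, and therefore -- by the characterization recalled just before the corollary -- it is the set of accumulation points of a computable sequence.

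The one point that needs a little care is keeping everything low: the Low Basis Theorem is being applied to the body of a finitely branching tree rather than to a subset of Cantor space directly, so one has to check that both the path $q$ and the index sequence $p$ read off from it are genuinely low -- which is immediate since the identification with Cantor space is computable and $\leqT$ preserves lowness downwards. Everything else -- computability of $(T,f)$ from a name of $A$, bi-computability of the rational-complex pieces, and the step from a limit-computable name to ``set of accumulation points of a computable sequence'' -- is either established earlier in the paper or entirely routine.
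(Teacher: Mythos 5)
Your proof is correct and follows exactly the route the paper intends: compute the tree of rational complexes via Proposition~\ref{prop:complexes}, apply the Low Basis Theorem to $[T]$, and read off the connectedness component via Lemma~\ref{lem:connected-component}. The only cosmetic point worth patching is that for a dead node $w\in T$ off the chosen path the set $\bigcup f(w)\cap[0,1]^n$ could in principle be empty and hence not bi-computable; since emptiness of a finite union of rational boxes is decidable, one can computably substitute $[0,1]^n$ in that case without affecting any $A_{p(i)}$ actually used in the intersection.
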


We close this section by mentioning that one can use the representation of closed sets by trees of rational complexes in
order to prove that the function $(A,x)\mapsto C$ that maps any non-empty closed set $A$ together with a point $x\in A$
to the connectedness component $C$ of $A$ that contains $x$ is computable. The point $x$ guides the path in the
tree of rational complexes that one has to take. This result was already proved in \cite{LZ08a}. We formulate
a non-uniform corollary here.

\begin{corollary}
Every connectedness component of a co-c.e.\ closed set $A\In[0,1]^n$ that contains a computable point $x\in[0,1]^n$
is itself co-c.e.\ closed.
\end{corollary}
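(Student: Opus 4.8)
The plan is to deduce this from Lemma~\ref{lem:connected-component} together with the remark, made just before this corollary, that the map $(A,x)\mapsto C$ sending a non-empty closed set $A$ and a point $x\in A$ to the connectedness component of $A$ containing $x$ is computable. So let $A\In[0,1]^n$ be co-c.e.\ closed and let $x\in[0,1]^n$ be a computable point belonging to some connectedness component $C$ of $A$; we must show $C$ is itself co-c.e.\ closed, i.e.\ that $C$ is a computable point of $\AA_-([0,1]^n)$.

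First I would observe that $x\in A$: indeed $x\in C\In A$. Since $A$ is co-c.e.\ closed it is a computable point of $\AA_-([0,1]^n)$, and $x$ is a computable point of $[0,1]^n$, so $(A,x)$ is a computable point of $\AA_-([0,1]^n)\times[0,1]^n$ that lies in the domain of the map $(A,x)\mapsto C$ (that domain being the set of pairs with $x\in A$). Applying the computable function $(A,x)\mapsto C$ — whose computability is exactly the result recalled above from \cite{LZ08a}, provable via the tree-of-rational-complexes representation by letting $x$ select the infinite path $p\in[T]$ with $C=C_p=\bigcap_{i}\bigcup f(p|_i)$ in the notation of Lemma~\ref{lem:connected-component} — yields that $C$ is a computable point of $\AA_-([0,1]^n)$. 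By definition this says precisely that $C$ is co-c.e.\ closed, which is the claim.

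The only point requiring a word of care is that the connectedness component of $A$ containing $x$ is unique and hence the ``multi-valued'' map $(A,x)\mapsto C$ is in fact single-valued on the relevant inputs, so that ``$C$ is the image'' makes sense as a statement about a computable point; this is immediate since connectedness components of any set partition it. A second minor point is that one should note the hypothesis ``$C$ contains a computable point $x$'' is what licenses feeding a concrete computable $x$ into the algorithm — without it there is nothing computable to guide the path through the tree. I do not anticipate a genuine obstacle here: the corollary is a direct non-uniform specialization of an already-established uniform computability result, and the argument is essentially ``apply a computable map to a computable input.''
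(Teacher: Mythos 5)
Your argument is exactly the paper's: the corollary is stated there as the non-uniform specialization of the uniform computability of $(A,x)\mapsto C$ (noted in the sentence immediately preceding it, with credit to \cite{LZ08a} and the tree-of-rational-complexes representation), and you apply that computable map to the computable input $(A,x)$. Correct and essentially identical to the intended proof.
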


We note that in the one-dimensional case an inverse holds true: every non-empty connected co-c.e.\ closed set $A\In[0,1]$
contains a computable point. However, the analogue statement is no longer true from dimension two upwards (see Corollary~\ref{cor:Orevkov-Baigger}).
Further interesting results on connected co-c.e.\ closed sets can be found in \cite{Kih12}.

\section{Brouwer's Fixed Point Theorem and Connected Choice}
\label{sec:BFT}

In this section, we want to prove that the Brouwer Fixed Point Theorem is computably equivalent to 
connected choice for any fixed dimension. We first define these two operations. By $\CC(X,Y)$ we denote
the {\em set of continuous functions} $f:X\to Y$ and for short we write $\CC_n:=\CC([0,1]^n,[0,1]^n)$.

\begin{definition}[Brouwer Fixed Point Theorem]
By $\BFT_n:\CC_n\mto[0,1]^n$ we denote the operation defined by $\BFT_n(f):=\{x\in[0,1]^n:f(x)=x\}$
for $n\in\IN$.
\end{definition}

We note that $\BFT_n$ is well-defined, i.e., $\BFT_n(f)$ is non-empty for all $f$, 
since by the Brouwer Fixed Point Theorem every $f\in\CC_n$ admits
a fixed point $x$, i.e., with $f(x)=x$. 
We can also consider the infinite dimensional version of the Brouwer Fixed Point Theorem on the Hilbert cube $[0,1]^\IN$,
which is represented by the map $\BFT_\infty:\CC([0,1]^\IN,[0,1]^\IN)\mto[0,1]^\IN$ with $\BFT_\infty(f):=\{x\in[0,1]^\IN:f(x)=x\}$.
This can also be seen as a special case of the Schauder Fixed Point Theorem and hence $\BFT_\infty$ is well-defined too.
We now define connected choice.

\begin{definition}[Connected choice]
By $\ConC_n:\In\AA_n\mto[0,1]^n$ we denote the operation defined by $\ConC_n(A):=A$ for all non-empty 
connected closed $A\In[0,1]^n$ and $n\in\IN$.
We call $\ConC_n$ {\em connected choice (of dimension $n$)}.
\end{definition}

Hence, connected choice is just the restriction of closed choice to connected sets.
We also use the following notation for the set of fixed points of a functions $f\in\CC_n$.

\begin{definition}[Set of fixed points]
By $\Fix_n:\CC_n\to\AA_n$ we denote the function with $\Fix_n(f):=\{x\in[0,1]^n:f(x)=x\}$.
\end{definition}

It is easy to see that $\Fix_n$ is computable, since $\Fix_n(f):=(f-\id|_{[0,1]^n})^{-1}\{0\}$ and it is well-known
that closed sets in $\AA_n$ can also be represented as zero sets of continuous functions (see \cite{BW99,BP03}).
We note that the Brouwer Fixed Point Theorem can be decomposed to $\BFT_n\supseteq\ConC_n\circ\Con_n\circ\Fix_n$.

The main result of this section is that the Brouwer Fixed Point Theorem and connected choice
are (strongly) equivalent for any fixed dimension $n$ (see Theorem~\ref{thm:BFT} below).
An important tool for both directions of the proof is the representation of closed sets by trees
of rational complexes.
We start with the direction $\ConC_n\leqSW\BFT_n$ that can be seen as a uniformization of an earlier
construction of Baigger \cite{Bai85} that is in turn built on results of Orevkov \cite{Ore63}.

We first formulate a stronger conclusion that we can derive from Proposition~\ref{prop:complexes} in case of connected sets.
In order to express these stronger conclusions we first recall the notion of effective pathwise 
connectedness as it was introduced in \cite{Bra08}. Essentially, a set is called {\em effectively pathwise connected},
if for every two points in the set we can compute a path that connects these two points entirely within this set.\footnote{We mention that the Warsaw circle is an example of a set that is 
pathwise connected but not effectively so, not even with respect to some oracle.}
We need a uniform such notion for sequences.

\begin{definition}[Effectively pathwise connected]
Let $(A_i)_{i\in\IN}$ be a sequence of non-empty closed sets $A_i\In\IR^n$. 
Then $(A_i)_{i\in\IN}$ is called {\em pathwise connected},
if there is a function $U:\In\IN\times\IR^n\times\IR^n\mto\CC([0,1],\IR^n)$, such that
for every $p\in U(i,x,y)$ with $x,y\in A_i$ we obtain $p(0)=x$, $p(1)=y$ and $\range(p)\In A_i$.
Such a $U$ is called a {\em witness of pathwise connectedness}.
If there is a computable such witness $U$, then $(A_i)_{i\in\IN}$ is called {\em effectively pathwise connected}.
\end{definition}
 
If a (name of a realizer of a) witness $U$ of pathwise connectedness of $(A_i)_{i\in\IN}$ can be computed from $A$, then 
we say that $(A_i)_{i\in\IN}$ is {\em pathwise connected uniformly in $A$}.
We note that any rational complex $R\In\CQ^n$ is connected and also automatically pathwise connected,
due to the simple structure of such complexes.
It is easy to see that there is a computable map that maps any rational complex $R\in\CQ^n$ to a witness of
pathwise connectedness of $\bigcup R$.
By $d(A,B):=\inf_{a\in A,b\in B}||a-b||$ we denote the {\em minimal distance} between sets $A,B\In\IR^n$.
We note that $d(A,B^\cc)>0$ is equivalent to $A\Subset B$ for non-empty compact $A,B\In\IR^n$.

\begin{proposition}[Connected sets]
\label{prop:connected-path-connected}
Given a non-empty connected closed set $A\In[0,1]^n$ 
we can compute sequences of distance functions $(d_{A_i})_{i\in\IN}$ and $(d_{A_i^\cc})_{i\in\IN}$
for non-empty closed sets $A_i\In[-1,2]^n$ such that:
\begin{enumerate}
\item $A=\bigcap_{i=0}^\infty A_i$,
\item $d(A_{i+1},A_i^\cc)>0$ for all $i\in\IN$,
\item $(A_i)_{i\in\IN}$ is pathwise connected uniformly in $A$.
\end{enumerate}
\end{proposition}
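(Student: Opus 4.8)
The plan is to revisit the construction in the proof of Proposition~\ref{prop:complexes} and observe that, when the input set $A$ is \emph{connected}, the tree of rational complexes produced there is essentially a ``path'' rather than a branching tree, and that this path can be replayed to yield the sequence $(A_i)_{i\in\IN}$ with the required properties. Concretely, I would run the algorithm from Proposition~\ref{prop:complexes} on the given $A\In[0,1]^n$, obtaining a tree $(T,f)$ of rational complexes with bound $b$ representing $A$. Define $A_i:=\bigcup_{w\in T\cap\IN^i}\bigcup f(w)$ as in that proof. Property~(1), $A=\bigcap_{i=0}^\infty A_i$, is exactly what Proposition~\ref{prop:complexes} gives us, and since each $A_i$ is a finite union of rational complexes, the distance functions $d_{A_i}$ and $d_{A_i^\cc}$ are computable uniformly in $i$ (as already remarked after Proposition~\ref{prop:complexes}, the maps $R\mapsto d_{\bigcup R}$ and $R\mapsto d_{(\bigcup R)^\cc}$ are computable). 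Property~(2), $d(A_{i+1},A_i^\cc)>0$, follows because step (4) of the construction guarantees $\bigcup f(wj)\Subset\bigcup f(w)$ at every node, and by finiteness of $T\cap\IN^i$ the minimal distance over all these compact inclusions is strictly positive; note $d(A_{i+1},A_i^\cc)>0$ is precisely $A_{i+1}\Subset A_i$ for these compact sets, as observed in the remark before the statement.

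The heart of the matter is property~(3): the sequence $(A_i)_{i\in\IN}$ must be pathwise connected, uniformly in $A$. Here I would use connectedness of $A$ crucially. Each $A_i$ is a disjoint union of the finitely many sets $\bigcup f(w)$ for $w\in T\cap\IN^i$, and each such $\bigcup f(w)$ is a rational complex, hence pathwise connected by an explicitly computable witness (as noted in the text right after the definition of effective pathwise connectedness). So $A_i$ need not itself be connected, but a path witness for $A_i$ only has to connect two points \emph{that both lie in $A_i$}; and if $x,y$ both lie in $A_i$ but in different components $\bigcup f(w)$, $\bigcup f(w')$, I still need to produce a path inside $A_i$ joining them. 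This is where I expect the real work: I would argue that since $A$ is connected and $A=\bigcap_j A_j\In A_i$, and since $A$ meets each component of $A_i$ that survives to contain a point of $A$ — wait, that is not automatic, because $A_i$ may have components disjoint from $A$.

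To handle this cleanly I would \emph{modify} the construction of the tree so that at each stage we keep only those components of the complex that will eventually contain a point of $A$ — but that requires positive information about $A$, which we do not have. Instead, the right move is to be more generous: redefine $A_i$ to be not $\bigcup_{w\in T\cap\IN^i}\bigcup f(w)$ but a connected enlargement of it obtained by re-connecting the finitely many complexes at level $i$ through thin rational ``tubes'' that avoid the part of the complement of $A$ already discovered (i.e.\ the balls $B(c_j,r_j-2^{-i})$ removed up to stage $i$), while still shrinking to $A$ in the limit because $A$ is connected and the tubes are confined to a $2^{-i}$-neighbourhood of $\bigcup_{w}\bigcup f(w)$. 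The connectedness of $A$ guarantees that such tubes can be chosen inside the discovered ``safe'' region — one adds a tube between two level-$i$ complexes exactly when the corresponding part of the ambient region between them has not yet been excised — and one checks that any point of $A$ still has distance $\geq 2^{-i}$ from $\partial A_i$ by the same induction as in Proposition~\ref{prop:complexes}, so $\bigcap_i A_i = A$ still holds and $A_{i+1}\Subset A_i$ is preserved. With $A_i$ connected, the uniform path witness is assembled from the computable path witnesses of the individual rational complexes and tubes, composed along a route between the two endpoint complexes; the route exists because $A_i$ is now connected and finite-branching-tree-structured, and its length is bounded using $b$, so the whole witness is computable uniformly from $A$. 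The main obstacle, then, is verifying that the tube-insertion can be performed effectively from negative information about $A$ while simultaneously not destroying the shrinking property $\bigcap_i A_i=A$ — i.e.\ balancing ``add enough tubes to stay connected'' against ``don't add so much that we fail to converge to $A$'' — and this is exactly what connectedness of $A$ buys us: the tube between two surviving complexes only needs to be routed through a region that, were it entirely in the complement of $A$, would disconnect $A$, contradicting the hypothesis.
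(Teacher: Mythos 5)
Your instinct at the very start --- that because $A$ is connected the tree ``is essentially a path'' --- is exactly the observation the paper exploits, but you then abandon it and go for a tube-based repair that is much harder and, as written, has a genuine gap.

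The paper's proof is short: since $A$ is connected, Lemma~\ref{lem:connected-component} says the tree $(T,f)$ has a \emph{unique} infinite path $p$, and one simply sets $A_i:=\bigcup f(p|_i)$. Each such $A_i$ is a single rational complex, hence already connected and effectively pathwise connected; properties (1) and (2) are immediate from the tree definition. The only thing to argue is that the unique path $p$ is \emph{computable} from the negative information about $A$. This is done by a waiting argument: at each level $i$ exactly one node $w_a$ satisfies $A\In\bigcup f(w_a)$ (again by connectedness and pairwise disjointness), and each other $\bigcup f(w_j)$ is a compact set disjoint from $A$, so it will eventually be entirely covered by the open balls enumerated in the negative description of $A$ --- a recognizable event. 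Thus one waits until all but one level-$i$ node is confirmed dead, takes that survivor as $p|_i$, and recurses. No modification of the tree is needed.

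Your route instead keeps $A_i$ as the union of \emph{all} level-$i$ complexes (which may be disconnected) and proposes to re-connect them by inserting rational ``tubes'' that avoid the removed balls $B(c_j,r_j-2^{-i})$. This is where the gap is. The tubes must be placed at stage $i$ using only the negative information discovered so far, and once placed they cannot be retroactively removed. If a tube runs through a region of $A^{\mathrm c}$ that has not yet been enumerated, it will persist into every later $A_j$, so $\bigcap_i A_i$ can be strictly larger than $A$, violating (1); the remark that tubes are ``confined to a $2^{-i}$-neighbourhood'' does not by itself repair this, because a single mis-placed tube at stage $i$ would have to be re-routed at stage $i+1$, and consistency of the whole nested sequence --- together with $A_{i+1}\Subset A_i$ --- is not established. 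Connectedness of $A$ does guarantee that \emph{some} correct route exists, but it does not tell you, at stage $i$, \emph{which} tube to draw, which is exactly the choice problem the paper sidesteps by waiting for the dead branches to reveal themselves. Replace the tube construction by the waiting argument for the unique path and the proof becomes both correct and short.
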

\begin{proof}
Given a non-empty connected closed $A\In[0,1]^n$ we can compute an infinite tree of non-empty rational complexes $(T,f)$ that
represents $A$ by Proposition~\ref{prop:complexes}.
Since $A$ is connected, $A$ is its only connectedness component and by Lemma~\ref{lem:connected-component} 
there is exactly one infinite path $p\in[T]$. 
If we can find this path, then $A_i:=\bigcup f(p|_i)$ is a sequence of closed sets $A_i\In[-1,2]^n$
with $A_{i+1}\Subset A_i$ for all $i$, which implies $d(A_{i+1},A_i^\cc)>0$ and $A=\bigcap_{i=0}^\infty A_i$. 
Since $f(p|_i)$ is a rational complex, it is straightforward how to determine $d_{A_i}$ and $d_{A_i^\cc}$, given
this complex and since $\bigcup f(p|_i)$ is connected it is also automatically pathwise connected and
a witness $U$ for pathwise connectedness can be easily computed. 

It remains to show how we can compute the unique infinite path $p$ in $T$.
For each fixed $i$ there are only finitely many words $w_0,...,w_k\in T\cap\IN^i$ and 
due to connectedness of $A$ and $\bigcup f(w_j)$ and the fact that all the $\bigcup f(w_j)$
are pairwise disjoint, it follows that there is exactly one such $w_a$ with $A\In\bigcup f(w_a)$. 
Due to compactness of the $\bigcup f(w_j)$ all the other $w_j$ with $j\not=a$ will eventually
be covered by negative information given as input for $A$ and if this happens it can be recognized. 
Hence, one just needs to wait until all the $\bigcup f(w_j)$ except one are covered by
negative information in order to identify $w_a$. Then $w_a\prefix p$ and by a repetition of this
procedure for each $i$ one can compute $p$.
\end{proof}

Now we use Proposition~\ref{prop:connected-path-connected} to prove that every non-empty connected closed set $A\In[0,1]^n$ can be 
effectively translated into a continuous function $f\in\CC_n$ that has all its fixed points in $A$.
The idea is to compute a compactly decreasing sequence $(A_i)_{i\in\IN}$ of closed sets according to the previous
proposition together with points $x_i\in A_i$ and paths $p_i$ in $A_i$ that connect $x_{i+1}$ with $x_i$.
In some sense we then use these paths like Ariadne's thread in order to construct a function $f$ that is a modified identity
with all fixed points shifted towards $A$ along the given paths.
By $||f||:=\sup_{x\in[0,1]^n}||f(x)||$ we denote the supremum norm for continuous functions $f:[0,1]^n\to[0,1]^n$.

\begin{lemma}
\label{lem:CC-BFT}
$\ConC_n\leqSW\BFT_n$ for all $n\geq1$.
\end{lemma}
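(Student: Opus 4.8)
The plan is to uniformize the classical Orevkov--Baigger construction with the help of the tree-of-rational-complexes machinery. First I would apply Proposition~\ref{prop:connected-path-connected} to the input: from a name of the non-empty connected closed set $A\In[0,1]^n$ we compute a compactly decreasing sequence of non-empty closed sets $A_i\In[-1,2]^n$ with $A=\bigcap_{i\in\IN}A_i$ and $d(A_{i+1},A_i^\cc)>0$, together with a witness of pathwise connectedness that is uniform in $A$; as produced by the algorithm in the proof of Proposition~\ref{prop:complexes} we may moreover assume $A_0=[-1,2]^n=:Q$. Since each $A_i$ is the union of a rational complex, it is bi-computable and has a computable point, uniformly, so we can also compute points $x_i\in A_i$, and using the pathwise-connectedness witness we compute paths $p_i\in\CC([0,1],\IR^n)$ with $p_i(0)=x_{i+1}$, $p_i(1)=x_i$ and $\range(p_i)\In A_i$. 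The goal is then to turn all of this into a continuous $f\colon Q\to Q$, computable from $A$, whose fixed point set satisfies $\Fix(f)\In A$. Granting this, Brouwer's Fixed Point Theorem applied to the cube $Q$ yields $\emptyset\neq\Fix(f)\In A$; after rescaling $Q$ affinely onto $[0,1]^n$ the map becomes an element of $\CC_n$ and the fixed point returned by $\BFT_n$ can be handed back directly, so we obtain $\ConC_n\leqSW\BFT_n$ (the reduction is strong since no second access to $A$ is needed to produce the output).

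The construction of $f$ is the heart of the matter, and this is where the paths serve as Ariadne's thread. I would build $f$ as a uniform limit $f=\lim_i f_i$ of continuous maps $f_i\colon Q\to Q$, starting from $f_0=\id$, where $f_{i+1}$ is obtained from $f_i$ by a modification $m_{i+1}$ that is supported inside $A_{i+1}$ and has supremum norm at most $2^{-i}$; the compact inclusions among the $A_i$ leave the room needed to perform $m_{i+1}$ continuously, the distance functions $d_{A_{i+1}}$, $d_{A_{i+1}^\cc}$ provide the required continuous cut-offs, and the path $p_{i+1}\In A_{i+1}$ is used to drag the points of the shell $A_{i+1}\setminus A_{i+2}^\circ$ ``one level inward'' along the thread $x_{i+1},x_{i+2},\ldots$, so that $f_{i+1}$ has no fixed point on that shell, and more precisely $\Fix(f_{i+1})\In A_{i+2}$. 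Because the modification norms are summable, $f$ exists and is continuous, and because every ingredient (the $A_i$, the $x_i$, the $p_i$, the cut-offs, hence each $m_{i+1}$) is obtained uniformly from $A$, the whole function $f$ is computable from $A$.

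It then remains to verify $\Fix(f)\In A$, which follows because the modifications stabilise pointwise off $A$: if $x\notin A$ then $x\notin A_N$ for some $N$ (and $N\geq1$ since $x\in Q=A_0$), hence $m_i(x)=0$ for all $i\geq N$ as $m_i$ is supported in $A_i\In A_N$, so $f(x)=f_{N-1}(x)$; but $x\in A_0\setminus A_N$ lies in one of the shells $A_j\setminus A_{j+1}^\circ$ with $j<N$, and on that shell $f_{N-1}$ agrees with $f_{j+1}$ and therefore has no fixed point by construction. Thus $\Fix(f)\In\bigcap_i A_i=A$, and Brouwer finishes the argument as above, with the affine equivalence of $\BFT$ on $Q$ and $\BFT_n$ together with the computability of $x\mapsto x_i$ and of path concatenation providing the remaining routine bookkeeping.

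The main obstacle is exactly the shell-modification step: one must arrange $m_{i+1}$ so that simultaneously (i) $f_{i+1}$ still maps $Q$ into $Q$, (ii) $f_{i+1}$ has no fixed point anywhere on the shell $A_{i+1}\setminus A_{i+2}^\circ$, (iii) no fixed points are created outside $A_{i+2}$, and (iv) $m_{i+1}$ is continuous and small. This is precisely the point at which a uniform version of Orevkov's and Baigger's geometry is needed, and it is also what forces us to work with genuine paths inside $A_{i+1}$ rather than with a single target point, since a connected co-c.e.\ closed set in dimension $\geq2$ need not contain any computable point; the one-dimensional case, where $A$ is simply a closed interval, is comparatively easy.
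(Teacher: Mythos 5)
Your plan follows the paper's own strategy — obtain a compactly nested sequence $(A_i)$ with points $x_i\in A_i$ and paths $p_i$ via Proposition~\ref{prop:connected-path-connected}, then use the paths as Ariadne's thread to build a computable $f$ whose fixed points are squeezed into $A$ — but the sketch stops short of the lemma's actual content in two respects. The first is an indexing inconsistency: if $m_{i+1}$ is supported inside $A_{i+1}$, then $f_1=\id+m_1$ agrees with the identity on all of $Q\setminus A_1$, so $\Fix(f_1)\supseteq Q\setminus A_1^\circ\neq\emptyset$, contradicting your stated invariant $\Fix(f_{i+1})\In A_{i+2}$ already at $i=0$. This off-by-one is fixable (put the support of $m_{i+1}$ in $A_i$ and act on the shell $A_i\setminus A_{i+1}^\circ$), but it shows the construction has not actually been traced through.

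The second and decisive gap is your requirement (iii), that no fixed points be created outside $A_{i+2}$, together with (iv), continuity, which you flag as the ``main obstacle'' and leave open. This is exactly where the lemma is proved or not. Once each $m_j$ is forced by continuity to vanish on the outer boundary $\partial A_{j-1}$ of its shell, two consecutive modifications $m_j$ and $m_{j+1}$ are both nonzero somewhere in the interior of the transition region, and nothing in your construction prevents their sum from passing through $0$ there, creating a spurious fixed point of the limit $f$. The paper's proof does not build $f$ as a limit of already-finished maps $f_i$; it writes $f=\id+2^{-4}\sum_i g_i$ with an inductive formula for $g_{i+1}$ that, for $x\notin A_{i+1}$, takes $g_{i+1}(x)$ to be a \emph{positive} scalar multiple of the unit vector $G_i(x)/\|G_i(x)\|$ in the direction of the already-constructed partial sum. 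This forces the total displacement $\sum_i g_i(x)$ at any $x\notin A$ to be a nonzero multiple of a single direction, so cancellation simply cannot occur, and it also makes the case distinction glue together continuously across the shell boundaries. Your proposal contains no analogue of this alignment mechanism, and without it the central claim $\Fix(f)\In A$ does not follow; what you have is a correct high-level plan for the reduction, not yet a proof of the lemma.
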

\begin{proof}
Given a non-empty closed and connected set $A\In[0,1]^n$, we will compute a function $f\in\CC_n$ such that 
all fixed points of $f$ are included in $A$.
Firstly, we compute the sequences $(d_{A_i})_{i\in\IN}$ and $(d_{A_i^\cc})_{i\in\IN}$
according to Proposition~\ref{prop:connected-path-connected}.

Without loss of generality, we can assume that $A\In[2^{-3},1-2^{-3}]^n$ and all $A_i\In[2^{-4},1-2^{-4}]^n=:Q$. 
This can always be achieved using a suitable computable homeomorphism $T:[-1,2]^n\to[2^{-4},1-2^{-4}]^n$ that is applied
to all input data and afterwards the fixed point $x$ that is found is transferred back by $T^{-1}(x)$.

Since we can compute the sequences of distance functions $(d_{A_i})_{i\in\IN}$
we can also find a sequence of points $(x_i)_{i\in\IN}$ with $x_i\in A_i$ for all $i\in\IN$. 
Since $(A_i)_{i\in\IN}$ is pathwise connected uniformly in $A$, 
we can also compute a sequence $(p_i)_{i\in\IN}$ of continuous functions $p_i:[0,1]\to[0,1]^n$
such that $p_i(0)=x_{i+1}$, $p_i(1)=x_i$ and $\range(p_i)\In A_i$. 
We can also uniformly compute a sequence $(D_i)_{i\in\IN}$ of functions $D_i:[0,1]^n\to[0,1]$ defined by
\[D_i(x):=\frac{d(x,A_{i+1})}{d(x,A_{i+1})+d(x,A_{i}^\cc)}\]
for all $x\in[0,1]^n$ and $i\in\IN$. Since $d(A_{i+1},A_i^\cc)>0$ for all $i\in\IN$, it follows
that the denominator is always non-zero and hence the functions $D_i$ are well-defined.
We obtain $D_i(x)=0\iff x\in A_{i+1}$ and $D_i(x)=1\iff x\in\overline{A_i^\cc}$.

We now compute a continuous function $f:[0,1]^n\to[0,1]^n$ with $\BFT_n(f)\In A$. 
The function $f$ will be defined as $f:=\id+2^{-4}\sum_{i=0}^\infty g_i$ using further continuous functions $g_i$.
As an abbreviation we write $G_i:=\sum_{j=0}^{i}g_j$ for the partial sums.
We also use the abbreviations $C_n:=\sum_{i=n}^\infty2^{-3i-1}$ and we note that $C_n\leq2^{-3n}$ for all $n\in\IN$.
We start with
\[g_0(x):=\left\{\begin{array}{ll}
2^{-1}\frac{\displaystyle x_2-x}{\displaystyle||x_2-x||}d(x,A_1) & \mbox{if $x\not\in A_2$}\\
0 & \mbox{otherwise}
\end{array}\right.\]
for all $x\in[0,1]^n$. 
In the next step we define inductively
\[g_{i+1}(x):=\left\{\begin{array}{ll}
2^{-3i-4}\frac{\displaystyle G_{i}(x)}{\displaystyle||G_{i}(x)||} & \mbox{if $x\not\in A_{i+1}$}\\[0.2cm]
2^{-3i-4}\frac{\displaystyle p_{i+2}(D_{i+1}(x))-x}{\displaystyle||p_{i+2}(D_{i+1}(x))-x||}D_{i+1}(x) & \mbox{if $x\in A_{i+1}\setminus A_{i+2}$}\\[0.2cm]
0 & \mbox{if $x\in A_{i+2}$}
\end{array}\right.\]
for all $x\in[0,1]^n$ and $i\in\IN$.

We first prove that all $g_i$ and $\sum_{i=0}^\infty g_i(x)$ are well-defined and
\begin{eqnarray}
\label{eqn:fixed-point}
x\in A=\bigcap_{i=0}^\infty A_i\iff\sum_{i=0}^\infty g_i(x)=0\iff f(x)=x.
\end{eqnarray}
The second equivalence follows immediately from the definition of $f$ (once we know that the $g_i$ and $\sum_{i=0}^\infty g_i$ are well defined).
If $x\in\bigcap_{i=0}^\infty A_i$, then it follows immediately that $g_i(x)=0$ for all $i$ and hence $\sum_{i=0}^\infty g_i(x)=0$.
If $x\not\in\bigcap_{i=0}^\infty A_i$, then there is a minimal $m\in\IN$ with $x\not\in A_m$, since $(A_i)_{i\in\IN}$ is decreasing.
If $m\in\{0,1\}$, then $x\not\in A_1$ and hence $x\not\in A_2$. Since $x_2\in A_2$ it follows that $||x_2-x||\not=0$.
We also obtain $d(x,A_1)>0$ and thus $g_0(x)\not=0$. This implies
\begin{eqnarray}
\label{eqn:fixed-point-0}\qquad
\sum_{i=0}^\infty g_i(x)=g_0(x)+\sum_{i=1}^\infty 2^{-3i-1}\frac{g_0(x)}{||g_0(x)||}=\frac{\displaystyle x_2-x}{\displaystyle||x_2-x||}(2^{-1}d(x,A_1)+C_1)\not=0.
\end{eqnarray}
If $m>1$, then $x\in A_{m-1}\setminus A_m$ and it follows that $g_i(x)=0$ for $i\leq m-2$.
Since $\range(p_m)\In A_m$ and $x\not\in A_m$, it follows that $||p_{m}(D_{m-1}(x))-x||\not=0$.
We also have $D_{m-1}(x)\not=0$ and hence $g_{m-1}(x)\not=0$. This implies
\[\sum_{i=0}^\infty g_i(x)=g_{m-1}(x)+\sum_{i=m}^\infty g_i(x)=
\frac{\displaystyle p_{m}(D_{m-1}(x))-x}{\displaystyle||p_{m}(D_{m-1}(x))-x||}(2^{-3m+2}D_{m-1}(x)+C_m)\not=0.\]
These two cases together prove the first equivalence in (\ref{eqn:fixed-point}) together with the fact that all $g_i$
and $\sum_{i=0}^\infty g_i(x)$ are well-defined. 
We can also conclude from Equation (\ref{eqn:fixed-point}) that $A$ is exactly the set of fixed point of $f$.

Next we want to show that by $f:=\id+2^{-4}\sum_{i=0}^\infty g_i$ actually a continuous function of type $f:[0,1]^n\to[0,1]^n$ is defined.
We show that $f([0,1]^n)\In[0,1]^n$. If $x\in[0,1]^n\setminus A_0$, then Equation~(\ref{eqn:fixed-point-0})
implies 
$f(x)=x+2^{-4}\frac{x_2-x}{||x_2-x||}(2^{-1}d(x,A_1)+C_1)$, which means that $f$ moves $x$ 
towards $x_2\in A_0\In Q$ and, in particular, $f(x)\in[0,1]^n$.
If $x\in A_0\In Q$, then $f(x)=x+2^{-4}\sum_{i=0}^\infty g_i(x)\in[0,1]^n$ since  
$||g_i||=\sup_{x\in[0,1]^n}||g_i(x)||\leq2^{-3i-1}$ and hence $||2^{-4}\sum_{i=0}^\infty g_i(x)||\leq2^{-4}C_0\leq2^{-4}$.
Now we prove that $f$ is also continuous.
First we show that each function $g_{i}$ is continuous. We start with $g_0$. 
If $x$ approaches $\partial A_2$ from the outside, then eventually $d(x,A_1)=0$ and hence $g_0(x)=0$. 
This means that $g_0$ continuous. We now continue with $g_{i+1}$.
If $x\in\partial A_{i+1}=\partial\overline{A_{i+1}^\cc}$,
then $D_{i+1}(x)=1$ and hence $p_{i+2}(D_{i+1}(x))=x_{i+2}$ and we obtain
\[g_{i+1}(x)=2^{-3i-4}\frac{\displaystyle p_{i+2}(D_{i+1}(x))-x}{\displaystyle||p_{i+2}(D_{i+1}(x))-x||}D_{i+1}(x)=2^{-3i-4}\frac{\displaystyle x_{i+2}-x}{\displaystyle||x_{i+2}-x||}.\]
If, on the other hand, $x$ approaches $\partial A_{i+1}$ from the outside of $A_{i+1}$, then $D_i(x)\to 0$ and $x$ is eventually in 
$A_{i}$ and hence $g_{j}(x)=0$ for $j\leq i-1$ and $G_i=g_i$.
In case $i>0$ we use $D_i(x)\to 0$ in order to conclude
\[g_{i+1}(x)=2^{-3i-4}\frac{\displaystyle G_{i}(x)}{\displaystyle||G_{i}(x)||}=
2^{-3i-4}\frac{\displaystyle p_{i+1}(D_{i}(x))-x}{\displaystyle||p_{i+1}(D_{i}(x))-x||}\to2^{-3i-4}\frac{\displaystyle x_{i+2}-x}{\displaystyle||x_{i+2}-x||}.\]
In case of $i=0$ we obtain
\[g_1(x)=2^{-4}\frac{\displaystyle G_{0}(x)}{\displaystyle||G_{0}(x)||}=2^{-4}\frac{x_2-x}{||x_2-x||}.\]
Finally, if $x$ approaches $\partial A_{i+2}$ from the outside, then $D_{i+1}(x)\to 0$ and $x$ is eventually in $A_{i+1}$.
Hence 
\[g_{i+1}(x)=2^{-3i-4}\frac{\displaystyle p_{i+2}(D_{i+1}(x))-x}{\displaystyle||p_{i+2}(D_{i+1}(x))-x||}D_{i+1}(x)\to0.\]
Altogether, this proves that the case distinction in the definition of $g_{i}$ is continuous and it is also computable since
\begin{enumerate}
\item $x\not\in A_{i+1}\iff D_i(x)>0$,
\item $x\in A_{i+1}\setminus A_{i+2}\iff D_i(x)=0$ and $D_{i+1}(x)>0$,
\item $x\in A_{i+2}\iff D_{i+1}(x)=0$.
\end{enumerate}
Hence all the functions $g_i$ and $f$ are continuous and can be uniformly computed in the input $A$.
We also obtain $\BFT_n(f)=A$ by Equation (\ref{eqn:fixed-point}), which proves $\ConC_n\leqSW\BFT_n$.
\end{proof}

We note that the proof shows more than necessary. We only need that $\BFT_n(f)\In A$ and we even obtain equality.

For the other direction $\BFT_n\leqSW\ConC_n$ of the reduction we uniformize ideas presented by J.S.\ Miller \cite[Section~2.3]{Mil02a}.
He proved the following result in terms of simplicial complexes. 
We note that rational complexes can be effectively converted into corresponding simplicial complexes.

\begin{proposition}[Topological index, J.S.\ Miller 2002]
\label{prop:index}
There is a computable topological index function $\ind:\In\CC_n\times\CQ^n\to\IZ$ such that for all $f\in\CC_n$ and $S,S_1,S_2\in\CQ^n$
such that $f$ has no fixed points on $\partial\bigcup S_1$ and $\partial\bigcup S_2$ the following holds:
\begin{enumerate}
\item $\ind(f,S)$ is defined if and only if $f(x)\not=x$ for all $x\in\partial\bigcup S$.
\item $\ind(f,S)\not=0$ implies that $f(x)=x$ for some $x\in\bigcup S$.
\item $\ind(f,\{[0,1]^n\})\not=0$.
\item If $\{x\in \bigcup S_1:f(x)=x\}=\{x\in \bigcup S_2:f(x)=x\}$, then one obtains $\ind(f,S_1)=\ind(f,S_2)$.
\item If $\bigcup S_1$ and $\bigcup S_2$ are disjoint, then $\ind(f,S_1\cup S_2)=\ind(f,S_1)+\ind(f,S_2)$.
\end{enumerate}
\end{proposition}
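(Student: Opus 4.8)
The plan is to realise $\ind(f,S)$ as the classical fixed point index of $\id-f$ on $\bigcup S$, equivalently as the mapping degree of the normalised displacement map
\[
\varphi_{f,S}\colon\partial\bigcup S\to\IS^{n-1},\qquad x\mapsto\frac{x-f(x)}{||x-f(x)||},
\]
which is well defined exactly when $f$ has no fixed point on $\partial\bigcup S$. The set $\partial\bigcup S$ is the union of the free $(n-1)$--faces of $S$ (those contained in a single $n$--simplex of the complex), hence a compact $(n-1)$--pseudomanifold carrying a canonical fundamental cycle $z_S$, and one puts $\ind(f,S):=(\varphi_{f,S})_*z_S\in H_{n-1}(\IS^{n-1};\IZ)=\IZ$; for the proofs of (2), (4) and (5) below it is convenient to use the equivalent description $\ind(f,S)=\deg(\id-f,(\bigcup S)^\circ,0)$. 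With this definition property~(1) is immediate: $\varphi_{f,S}$, hence $\ind(f,S)$, exists precisely when $f(x)\neq x$ for all $x\in\partial\bigcup S$.

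For the computability claim I would argue as follows. Given a name of $f\in\CC_n$ one first tries to certify that $\epsilon:=\min_{x\in\partial\bigcup S}||x-f(x)||$ is positive, by evaluating $f$ to increasing precision on finer and finer rational nets of the rational polyhedron $\partial\bigcup S$ and combining this with an extracted modulus of continuity of $f$; this search halts, with a provable positive rational lower bound for $\epsilon$, exactly when $f$ has no fixed point on $\partial\bigcup S$, which accounts for the partiality in~(1). Once such an $\epsilon$ is available, refine a rational triangulation of $\partial\bigcup S$ finely enough that the piecewise-linear interpolant $\widetilde f$ of $f$ on it satisfies $||f-\widetilde f||<\epsilon/3$ on $\partial\bigcup S$; then $\id-\widetilde f$ is a rational piecewise-linear map, nowhere zero on $\partial\bigcup S$, and the straight-line homotopy joins $\id-\widetilde f$ to $\id-f$ through maps of norm $\geq2\epsilon/3$ on $\partial\bigcup S$. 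The degree of the induced piecewise-linear map to $\IS^{n-1}$ is now a finite signed count: pick a rational regular value $v\in\IS^{n-1}$ of $(\id-\widetilde f)/||\id-\widetilde f||$ (found by finite search) and sum the orientation signs of the top simplices of $\partial\bigcup S$ mapped onto the ray through $v$. Homotopy invariance of the degree shows the answer does not depend on the triangulation, on $\widetilde f$, or on $v$, so $\ind(f,S)$ is a well-defined computable integer.

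Properties~(2)--(5) are then the standard properties of the fixed point index, checked in the piecewise-linear model just set up. Property~(2) is the solution property: $\deg(\id-f,(\bigcup S)^\circ,0)\neq0$ forces a zero of $\id-f$ in $\bigcup S$, i.e.\ a fixed point of $f$. Property~(5) is additivity of the degree over a disjoint union. Property~(4) is excision: since $f$ is fixed-point-free on $\partial\bigcup S_1$ and $\partial\bigcup S_2$, the common fixed-point set $Z$ lies in the interior of both complexes, so excising a small open neighbourhood $U$ of $Z$ shows both indices equal $\deg(\id-f,U,0)$. Property~(3) combines normalisation with a homotopy: for $c=(\tfrac12,\dots,\tfrac12)$ the centre of the cube, $h_t:=\id-\bigl((1-t)f+tc\bigr)$ is fixed-point-free on $\partial[0,1]^n$ for every $t\in[0,1]$ --- for $t>0$ each coordinate satisfies $(1-t)f(x)_i+\tfrac t2\in[\tfrac t2,\,1-\tfrac t2]\subseteq(0,1)$, so $(1-t)f(x)+tc$ lies in the open cube and cannot equal a boundary point $x$, while $t=0$ is the hypothesis --- whence $\ind(f,\{[0,1]^n\})=\ind(c,\{[0,1]^n\})=1\neq0$ by the normalisation of the index for a constant self-map.

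The step I expect to be most delicate is carrying out the excision and homotopy-invariance arguments underlying (2) and (4) entirely within the polyhedral category: $\partial\bigcup S$ need not be a manifold (several sheets of $S$ may meet along lower-dimensional faces), so it must be handled as an $(n-1)$--cycle rather than a manifold, and one has to be careful when passing between the closed polyhedra $\bigcup S_i$ and the open sets on which degree theory is usually phrased, tracking throughout where $\id-f$ vanishes. The second non-routine point is making the whole construction uniformly computable from the function-space name of $f$ --- extracting a modulus of continuity and certifying $\epsilon>0$ --- which is exactly what makes $\ind$ a partial computable function whose domain consists of the admissible pairs. A fully detailed execution is given by Miller in Section~2.3 of \cite{Mil02a}.
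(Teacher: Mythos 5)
The paper does not actually prove this proposition: it states it as Miller's result and refers to Section~2.3 of \cite{Mil02a}, adding only the remark that Miller's proof uses simplicial homology. Your sketch is a correct reconstruction of that standard degree-theoretic argument — defining $\ind$ as the degree of the normalised displacement map, certifying a positive lower bound on $\|x-f(x)\|$ along $\partial\bigcup S$ to handle both partiality and computability, passing to a PL approximation for the finite signed count, and reading off (2)--(5) as the solution, normalisation-by-homotopy, excision, and additivity properties of degree — and you correctly flag the two genuinely delicate points (that $\partial\bigcup S$ is only a pseudomanifold and must be treated as a cycle, and the uniform extraction of the $\epsilon$-certificate from a name of $f$). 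So the proposal is correct and matches the approach of the cited source.
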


The proof of this result uses simplicial homology theory and, more specifically, the local topological degree.
The effectivization follows the lines of classically known results in algebraic topology.
Computability aspects of homology have also been studied by
in a discrete setting by Kaczynski et.\ al.\ \cite{KMM04} and in the context of computable analysis by Collins \cite{Col08,Col09}. 
We essentially use Miller's ideas to reduce the Brouwer Fixed Point Theorem uniformly to connected choice.
First we prove that the map $\Con_n\circ\Fix_n$ is computable (which might be surprising in light of Theorem~\ref{thm:Con}).

\begin{proposition}
\label{prop:Con-Fix}
$\Con_n\circ\Fix_n:\CC_n\mto\AA_n$ is computable for all $n\in\IN$.
\end{proposition}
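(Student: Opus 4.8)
The plan is to show that from a continuous $f \in \CC_n$ one can compute a tree of rational complexes whose infinite paths correspond exactly to the connectedness components of $\Fix_n(f)$, and that the component associated to each path can be computed without needing to single out a path. The point is that, although $\Fix_n(f)$ is only co-c.e.\ closed and producing one component requires $\WKL$ by Theorem~\ref{thm:Con}, the \emph{set of all} components can be produced as an element of $\AA_n$ because the set $\AA_n$ carries the negative (hyperspace) topology and we only need to enumerate the complement of $\Con_n(\Fix_n(f)) = \Fix_n(f)$ together with, componentwise, negative data. Since $\Fix_n(f) \in \AA_n$ is itself computable from $f$ (by the computability of $\Fix_n$ noted before Proposition~\ref{prop:index}), the real content is to exhibit the multi-valued map sending $f$ to the family of components, and the key new ingredient beyond Proposition~\ref{prop:complexes} is Miller's index function from Proposition~\ref{prop:index}, which lets us decide, for each node of a candidate tree built from simplicial complexes, whether that node actually meets $\Fix_n(f)$.

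First I would run the construction of Proposition~\ref{prop:complexes} on the co-c.e.\ closed set $A := \Fix_n(f)$, obtaining a tree $(T,f_T)$ of rational complexes representing $A$; by the remark after that proof we may convert it to a tree of proper rational simplicial complexes $S_w \in \SQ^n$ with $\bigcup S_w = \bigcup f_T(w)$, refining at each stage so that $f$ has no fixed point on the boundary $\partial\bigcup S_w$ — this can be arranged since at stage $i$ we have removed a stripe of width $2^{-i-1}$ around the boundary, so the new boundaries lie outside $A = \Fix_n(f)$ and hence, by continuity and compactness, $f(x) \ne x$there. Then by Proposition~\ref{prop:index} the index $\ind(f, S_w)$ is defined for every node $w$, and by properties (2), (4), (5) it is nonzero precisely along the branches that retain a fixed point. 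More carefully, by induction on $|w|$ using (5) (the $\bigcup S_{wj}$ are pairwise disjoint by construction) and (4) (the fixed points inside $\bigcup S_w$ are exactly those inside the union of its children, since the stripes removed contained no fixed points), one sees $\ind(f,S_w) = \sum_{wj \in T} \ind(f,S_{wj})$, so a node has a nonzero-index descendant at every level iff its own index is nonzero iff $\bigcup S_w \cap \Fix_n(f) \ne \emptyset$. Pruning $T$ to the subtree $T' := \{w \in T : \ind(f,S_w) \ne 0\}$ — which is computable since $\ind$ is — yields an infinite finitely branching tree all of whose nodes meet $\Fix_n(f)$, still representing $A$, and whose infinite paths are, by Lemma~\ref{lem:connected-component}, in bijection with the connectedness components of $A$.

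The final step is to read off $\Con_n(\Fix_n(f))$ from $(T',f_T)$ as a multi-valued output in $\AA_n$: for each candidate infinite path $p$ we would output the component $C_p = \bigcap_i \bigcup f_T(p|_i)$, and the map $(T',f_T,p) \mapsto C_p$ into $\AA_n$ (indeed into $\AA_-([0,1]^n)$) is computable just as in the proof of Theorem~\ref{thm:Con}. The only subtlety is that picking $p$ is not computable; but $\Con_n \circ \Fix_n$ is multi-valued, so it suffices that for \emph{some} valid $p$ (equivalently, some component) the output is correct, and \emph{any} infinite path $p \in [T']$ gives a genuine component. Thus one legitimate realizer strategy is: search the finitely branching pruned tree for an extendible branch using the available bound $b$ — wait, that search is exactly $\WKL$ again. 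I expect this to be the main obstacle and the place where the argument must be done with care: the resolution is that we do not need to commit to a single $p$ at all. Instead, observe that the whole family $\{C_p : p \in [T']\}$ can be emitted as one element of $\AA_n$ directly, because $\bigcup_{p\in[T']} C_p = A = \Fix_n(f)$ (Lemma~\ref{lem:connected-component}) which is co-c.e.\ closed and computable from $f$; and the additional "componentwise" structure demanded by the target space $\AA_n$ — here I use that the codomain of $\Con_n$ is the hyperspace $\AA_n$ with negative information — is already furnished by the fact that every element of the family is a closed subset of the computable closed set $A$. So the realizer simply outputs (a name of) $A \in \AA_n$, interpreted as the element of $\AA_n$ whose "points" are its components; no path needs to be found. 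Writing this out amounts to checking that the identity-style map $A \mapsto A$ from $\AA_-([0,1]^n)$ to the codomain of $\Con_n$ is, on connected-componentwise-closed inputs, a correct realizer of $\Con_n$, which is routine once the representation conventions for $\AA_n$ are unwound.
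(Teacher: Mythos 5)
Your construction up to and including the pruned tree $T'$ is essentially the paper's argument, and your index computation (nonzero index iff the node meets $\Fix_n(f)$, with the additivity $\ind(f,S_w)=\sum_j\ind(f,S_{wj})$) is exactly the right observation. The gap is that you abandon the approach at the decisive moment, claiming that finding an infinite path in $T'$ ``is exactly $\WKL$ again.'' It is not. $\WKL$ is needed when membership in a tree is only co-enumerable, so that one must search for an ever-extendible branch. Here, by contrast, membership in $T'$ is \emph{decidable}: at each node $w\in T'$ you can compute $\ind(f,S_{w0}),\dots,\ind(f,S_{wk})$, and additivity together with $\ind(f,S_w)\ne 0$ guarantees that \emph{at least one child has nonzero index}. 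So a greedy descent from the root---at each node, inspect the finitely many children and move to any one with nonzero index---is a terminating computable step that always succeeds, producing an infinite path $p$ with no search at all. Then $C_p=\bigcap_i\bigcup f_T(p|_i)\in\AA_n$ is computed as in Theorem~\ref{thm:Con}. This is precisely the paper's proof.

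Your proposed ``fix'' of instead outputting $A$ itself does not work and is a type error. The codomain of $\Con_n$ is $\AA_n$, the hyperspace of non-empty closed subsets of $[0,1]^n$; the value $\Con_n(A)$ is the \emph{set} of connectedness components, and a realizer must produce a $\psi_-$-name of \emph{one} particular component $C\subseteq A$. A name of $A$ is a name of the closed set $A$, which belongs to $\Con_n(A)$ only when $A$ is connected; in general $\Fix_n(f)$ is not connected, so the identity map is not a realizer. There is no representation convention under which a name of $A$ is ``interpreted'' as the family of components of $A$---that would be a point of a hyper-hyperspace, not of $\AA_n$. So you should simply reinstate the greedy path descent you had in hand; the proof is then complete.
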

\begin{proof}
Given a continuous function $f\in\CC_n$ we can easily compute the set of fixed points $A:=\{x\in[0,1]^n:f(x)=x\}\in\AA_n$.
Using Proposition~\ref{prop:complexes} we can compute a tree $(T,f)$ of rational complexes that represents $A$.
Using Proposition~\ref{prop:index} we can now identify an infinite path $p$ in $T$ and hence
by Lemma~\ref{lem:connected-component} a connectedness component $C$ of $A$. 

We start with the empty node $\varepsilon$ in $T$. 
Given a node $w\in T$, we construct an extension $wi\in T$ that is part of an infinite path as follows.
Let us assume that $S_0=f(w0),...,S_k=f(wk)$ are the rational complexes that we need to consider. 
Due to the definition of a tree of rational complexes we know that $\bigcup S_i\cap\bigcup S_j=\emptyset$ for $i\not=j$.
Since $A\Subset\bigcup_{j=0}^k\bigcup S_j$, it is clear that $f$ cannot have any fixed point on
any of the boundaries $\partial\bigcup S_j$, and hence we can compute the indexes $\ind(f,S_0),...,\ind(f,S_k)$ by Proposition~\ref{prop:index} (1).
One of them, say $\ind(f,S_i)$, must be different from $0$, as one can see inductively using Proposition~\ref{prop:index} (3)-(5).
By Proposition~\ref{prop:index} (2), this means that $f$ has a fixed point in $S_i$, which means
that $A\cap\bigcup S_i\not=\emptyset$. We use this $wi$ as an extension of $w$ and we proceed
inductively in the same manner.

Altogether, this algorithm produces an infinite path $p$ of $T$ and hence we can compute
the connected component $C:=\{\bigcap_{i=0}^\infty\bigcup f(p|_i):p\in[T]\}\in\AA_n$ of $A$ by Lemma~\ref{lem:connected-component}.
This shows that $\Con_n\circ\Fix_n$ is computable.
\end{proof}

Since $\BFT_n(f)\supseteq\ConC_n\circ\Con_n\circ\Fix_n(f)$ we can directly conclude $\BFT_n\leqSW\ConC_n$ for all $n$.
Together with Lemma~\ref{lem:CC-BFT} we obtain the following theorem.

\begin{theorem}[Brouwer Fixed Point Theorem]
\label{thm:BFT}
$\BFT_n\equivSW\ConC_n$ for all $n\in\IN$.
\end{theorem}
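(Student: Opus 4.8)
The plan is to assemble Theorem~\ref{thm:BFT} from the two reductions that have just been prepared. Lemma~\ref{lem:CC-BFT} already gives $\ConC_n\leqSW\BFT_n$ for all $n\geq1$, and for $n=0$ the statement is trivial since both $\ConC_0$ and $\BFT_0$ act on (essentially) singleton spaces and are computable. So the remaining content is the converse direction $\BFT_n\leqSW\ConC_n$, and here the strategy is to exhibit a factorization of $\BFT_n$ through connected choice precomposed with computable maps.

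\medskip\noindent\textbf{Step 1: Factor $\BFT_n$ through $\ConC_n$.} The key observation, recorded just before the statement, is the set-theoretic inclusion $\BFT_n(f)\supseteq\ConC_n\bigl(\Con_n(\Fix_n(f))\bigr)$: given $f\in\CC_n$, the map $\Fix_n$ computably produces the closed set $A=\{x:f(x)=x\}\in\AA_n$ of fixed points (which is non-empty by Brouwer), then $\Con_n$ selects a connectedness component $C$ of $A$, which is a non-empty connected closed subset of $[0,1]^n$, and then $\ConC_n$ picks a point of $C$, which is in particular a fixed point of $f$. Thus any realizer of $\ConC_n$, fed through these glue maps, realizes $\BFT_n$. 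Since $\Fix_n$ is computable (it is the preimage $(f-\id)^{-1}\{0\}$, and zero sets of continuous functions are a standard representation of closed sets), it remains only to see that the composite $\Con_n\circ\Fix_n$ can be performed computably.

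\medskip\noindent\textbf{Step 2: Invoke Proposition~\ref{prop:Con-Fix}.} This is exactly the statement that $\Con_n\circ\Fix_n:\CC_n\mto\AA_n$ is computable, which has already been proved using Miller's topological index (Proposition~\ref{prop:index}) together with the tree-of-rational-complexes representation (Proposition~\ref{prop:complexes}). Granting this, we have computable $H$ turning $f$ into a connected closed set $C$, and then $\ConC_n(C)\subseteq\BFT_n(f)$, so composing a realizer of $\ConC_n$ with $H$ (and the identity output modificator) witnesses $\BFT_n\leqSW\ConC_n$. Combining with Lemma~\ref{lem:CC-BFT} gives $\BFT_n\equivSW\ConC_n$ for all $n\geq1$, and the $n=0$ case is immediate, completing the theorem.

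\medskip\noindent\textbf{Where the difficulty lies.} At the level of this final theorem the argument is a short bookkeeping assembly — the genuine work has already been discharged in Lemma~\ref{lem:CC-BFT} and Proposition~\ref{prop:Con-Fix}. Within those, the hard part is the forward reduction $\ConC_n\leqSW\BFT_n$: one must, from a connected closed set $A$ given by negative information, build a genuine continuous self-map of the cube whose fixed point set is exactly $A$, which requires first extracting the unique infinite path in the tree of rational complexes (Proposition~\ref{prop:connected-path-connected}) and then patching together the infinitely many ``Ariadne's thread'' corrections $g_i$ so that the resulting series converges to a continuous function with the right fixed points and with range inside $[0,1]^n$. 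For the converse direction the subtle point is that $\Con_n\circ\Fix_n$ is computable even though $\Con_n$ alone is as hard as $\WKL$ (Theorem~\ref{thm:Con}); the extra information that makes this work is the topological index, which lets us computably decide, at each level of the tree, which child complex still contains a fixed point. Assembling Theorem~\ref{thm:BFT} from these facts is then essentially a one-line composition argument, noting that $\ConC_n$ is applied to a legitimate input (non-empty connected closed set) and that strong Weihrauch reductions compose with computable maps.
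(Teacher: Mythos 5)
Your proposal is correct and matches the paper's own argument essentially line for line: the forward reduction is Lemma~\ref{lem:CC-BFT}, and the converse comes from the factorization $\BFT_n\supseteq\ConC_n\circ\Con_n\circ\Fix_n$ together with Proposition~\ref{prop:Con-Fix} (which the paper states immediately before the theorem), with the $n=0$ case handled by noting both sides are computable. Your discussion of where the real difficulty lies — in Lemma~\ref{lem:CC-BFT} and in the index-based computability of $\Con_n\circ\Fix_n$ — is an accurate reading of the paper's structure.
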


It is easy to see that in general the Brouwer Fixed Point Theorem and connected choice
are not independent of the dimension. In case of $n=0$ the space $[0,1]^n$ is the
one-point space $\{0\}$ and hence $\BFT_0\equivSW\ConC_0$ are both computable. 
In case of $n=1$ connected choice was already studied in \cite{BG11a} and it was
proved that it is equivalent to the Intermediate Value Theorem $\IVT$ (see Definition~6.1 and Theorem~6.2 in \cite{BG11a}).

\begin{corollary}[Intermediate Value Theorem]
\label{cor:IVT}
$\IVT\equivSW\BFT_1\equivSW\ConC_1$.
\end{corollary}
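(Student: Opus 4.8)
The plan is to obtain Corollary~\ref{cor:IVT} from Theorem~\ref{thm:BFT} in the case $n=1$, which already delivers $\BFT_1\equivSW\ConC_1$, so that all that remains is the elementary equivalence $\IVT\equivSW\BFT_1$ (this equivalence, in the form $\IVT\equivSW\ConC_1$, is also available directly from \cite[Theorem~6.2]{BG11a}, so a one-line proof by transitivity is possible; but I prefer to sketch the self-contained argument). I would prove the two reductions separately, both strong and both with trivial output modification.

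For $\BFT_1\leqSW\IVT$ I would, given a continuous $h\colon[0,1]\to[0,1]$, compute the continuous function $f\colon[0,1]\to\IR$, $f(x):=x-h(x)$; then $f(0)=-h(0)\leq 0\leq 1-h(1)=f(1)$ and $f^{-1}\{0\}=\Fix_1(h)$, so any zero returned by $\IVT$ is already a fixed point of $h$ and no access to the original input is needed. For the converse $\IVT\leqSW\BFT_1$, given $f$ with $f(0)\leq 0\leq f(1)$ I would form the continuous self-map $\tilde g\colon[0,1]\to[0,1]$ defined by $\tilde g(x):=\max\{0,\min\{1,x-f(x)\}\}$, which is computable from $f$. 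A short case distinction shows $\Fix_1(\tilde g)=f^{-1}\{0\}$: if $x-f(x)\in[0,1]$ then $\tilde g(x)=x$ forces $f(x)=0$; clamping to $0$ can produce a fixed point only at $x=0$, which would require $f(0)>0$; clamping to $1$ only at $x=1$, requiring $f(1)<0$; and conversely $f(x)=0$ gives $x-f(x)=x\in[0,1]$. Hence a fixed point of $\tilde g$ is directly a zero of $f$, again with $K=\id$. (Alternatively, $\IVT\leqSW\ConC_1$ can be read off from Proposition~\ref{prop:Con-Fix}: compute $\Con_1\circ\Fix_1$ of the associated self-map and feed the resulting connected component to $\ConC_1$.) Composing either pair of reductions with Theorem~\ref{thm:BFT} at $n=1$ yields $\IVT\equivSW\BFT_1\equivSW\ConC_1$.

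I do not expect a genuine obstacle: the substantive content is entirely in Theorem~\ref{thm:BFT}, and the two translations above are one-line constructions. The only point requiring a little care is matching the exact domain convention used for $\IVT$ in \cite{BG11a} --- whether one demands a strict sign change $f(0)<0<f(1)$ rather than $f(0)\leq 0\leq f(1)$. If the strict version is used, one first conjugates all input and output data by a computable homeomorphism onto a proper subcube so that the constructed function has strictly separated signs at the endpoints, exactly the device already employed in the proof of Lemma~\ref{lem:CC-BFT}. With that routine adjustment the corollary follows immediately.
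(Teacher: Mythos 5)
Your proposal is correct, and it is more self-contained than what the paper actually does. The paper proves nothing new for this corollary: the text immediately preceding it points out that $\ConC_1\equivSW\IVT$ was already established in \cite{BG11a} (Definition~6.1 and Theorem~6.2 there), so Corollary~\ref{cor:IVT} is obtained by pure transitivity from that citation together with Theorem~\ref{thm:BFT} for $n=1$. You explicitly acknowledge this route and then opt instead to verify the missing link $\IVT\equivSW\BFT_1$ directly, via the substitution $f(x):=x-h(x)$ in one direction and the clamped self-map $\tilde g(x):=\max\{0,\min\{1,x-f(x)\}\}$ in the other. Both of your reductions are sound, both use trivial output modification, and you correctly anticipate the only genuine technicality, namely whether $\IVT$ in \cite{BG11a} is formulated with a strict sign change at the endpoints; your remedy of conjugating by a computable homeomorphism onto a proper subinterval (so that $h'(0)>0$ and $h'(1)<1$, hence $f(0)<0<f(1)$) is exactly the standard device and works. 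The upshot: the paper buys brevity by citing an already-known equivalence, whereas your version replaces that citation with a two-line elementary construction, at the cost of a small domain-matching remark; both converge on Theorem~\ref{thm:BFT} for the equivalence with $\ConC_1$.
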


It is also easy to see that the Brouwer Fixed Point Theorem $\BFT_2$ in dimension two is 
more complicated than in dimension one. For instance, it is known that the Intermediate Value
Theorem $\IVT$ always offers a computable function value for a computable input, whereas
this is not the case for the Brouwer Fixed Point Theorem $\BFT_2$ by Baigger's counterexample
\cite{Bai85}. We continue to discuss this topic in Section~\ref{sec:dimension}.

Here we point out that Proposition~\ref{prop:Con-Fix} implies that the fixed point set $\Fix_n(f)$ 
of every computable function $f:[0,1]^n\to[0,1]^n$ has a co-c.e.\ closed connectedness component.
J.S.\ Miller observed that also any co-c.e.\ closed superset of such a set is the
fixed point set of some computable function and the following result is a uniform version
of this observation.
We denote by $(f,g):\In X\mto Y\times Z$ the {\em juxtaposition} of two functions $f:\In X\mto Y$
and $g:\In X\mto Z$, defined by $(f,g)(x)=(f(x),g(x))$.

\begin{theorem}[Fixability]
\label{thm:fixable}
$(\Fix_n,\Con_n\circ\Fix_n)$ is computable and has a multi-valued computable right inverse for all $n\in\IN$.
\end{theorem}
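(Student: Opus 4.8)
The statement has two independent halves. For the \emph{computability} of $h:=(\Fix_n,\Con_n\circ\Fix_n)$ there is nothing to do beyond assembling facts already recorded: $\Fix_n:\CC_n\to\AA_n$ is computable because $\Fix_n(f)=(f-\id)^{-1}\{0\}$ and closed sets in $\AA_n$ can be handed over as zero sets of continuous functions, while $\Con_n\circ\Fix_n:\CC_n\mto\AA_n$ is computable by Proposition~\ref{prop:Con-Fix}; hence the juxtaposition $h$ is computable. (The case $n=0$ is entirely trivial, as $[0,1]^0$ is a one-point space, so from now on I work with $n\geq1$.)

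The content lies in the multi-valued computable right inverse. Its input is a pair $(A,C)$ with $A\In[0,1]^n$ a non-empty closed set given by negative information and $C\in\Con_n(A)$ one of its connectedness components (again by negative information); the goal is to compute, uniformly in $(A,C)$, some $f\in\CC_n$ with $\Fix_n(f)=A$ — the second coordinate of $h(f)$ then automatically contains $C$, since $C$ is a component of $A=\Fix_n(f)$. First I would set the stage exactly as in Lemma~\ref{lem:CC-BFT}: using Proposition~\ref{prop:complexes} compute a tree $(T,g)$ of $n$-dimensional rational complexes representing $A$ together with a bound of $T$, and form the outer approximations $A_i:=\bigcup_{w\in T\cap\IN^i}\bigcup g(w)$, so that $A_{i+1}\Subset A_i$, $A=\bigcap_iA_i$, and the distance functions $d_{A_i},d_{A_i^\cc}$ are computable (Corollary~\ref{cor:outer-approximation}). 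Then, from $C$, extract the unique path $p\in[T]$ with $C=\bigcap_i\bigcup g(p|_i)$: at each level $i$ exactly one of the finitely many level-$i$ complexes meets the connected set $C$, and every other one is a fixed compact set disjoint from $C$ and so eventually recognised as covered by the negative information about $C$; one waits until all but one level-$i$ complex has been excluded. This is the step that genuinely uses the extra input $C$ — and it is unavoidable, since $\Fix_n$ alone has no computable right inverse: by Proposition~\ref{prop:Con-Fix} every fixed point set of a computable function has a co-c.e.\ closed connectedness component, whereas not every non-empty co-c.e.\ closed set does.

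Now comes the heart of the matter: adapt the function $f:=\id+2^{-k}\sum_ig_i$ of Lemma~\ref{lem:CC-BFT} to the present, \emph{disconnected}, situation. The blueprint is unchanged: $g_i$ must vanish exactly on $A_{i+1}$, be non-zero on $A_i\setminus A_{i+1}$, satisfy $\|g_i\|\leq2^{-i-1}$, and have the property that all summands $g_j(x)$ with $j\geq i$ point in a common direction whenever $x\in A_i\setminus A_{i+1}$; then $\sum_ig_i(x)=0\iff x\in\bigcap_iA_i=A\iff f(x)=x$, verbatim as around Equation~(\ref{eqn:fixed-point}). The only change is that the displacement field must be routed independently inside each rational complex $\bigcup g(w)$, $w\in T\cap\IN^i$: on $\bigcup g(w)\setminus A_{i+1}$ it has to push $x$ inward, toward the child complexes $\bigcup g(wj)$ it contains. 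To keep this continuous and zero-free, I would fix, coherently over the whole tree, a distinguished point $y_w\in\bigcup g(w)$ and, for each child $wj\in T$, a path inside $\bigcup g(w)$ joining $y_{wj}$ to $y_w$ — possible and computable since rational complexes are effectively pathwise connected — chosen so that along the path $p$ these reduce to the single ``Ariadne thread'' $(x_i)$, $x_i\in\bigcup g(p|_i)$, of Lemma~\ref{lem:CC-BFT}. The vector $g_i(x)$ on $\bigcup g(w)\setminus A_{i+1}$ then points from $x$ toward a target sliding along these paths under the control of $D_i(x):=d(x,A_{i+1})/\bigl(d(x,A_{i+1})+d(x,A_i^\cc)\bigr)$, mimicking the three-case definition of $g_{i+1}$ in Lemma~\ref{lem:CC-BFT}.

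The remaining work is the verification, which parallels Lemma~\ref{lem:CC-BFT} but must now be run complex-by-complex: (i) for $x\notin A$ take $m$ minimal with $x\notin A_m$, whence $g_j(x)=0$ for $j<m-1$, $g_{m-1}(x)\neq0$, and $\sum_{j\geq m}g_j(x)$ is a non-negative multiple of $g_{m-1}(x)/\|g_{m-1}(x)\|$, so cancellation is impossible and $f(x)\neq x$; (ii) $f$ maps $[0,1]^n$ into itself, via the norm bound on $\sum g_i$ and a preliminary rescaling of all data into a small central subcube, exactly as in Lemma~\ref{lem:CC-BFT}; (iii) each $g_i$, and hence $f$, is continuous. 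Step (iii) is what I expect to be the main obstacle, and it is precisely where the free use of connectedness in Lemma~\ref{lem:CC-BFT} is no longer available: one must match the piecewise formula across every boundary $\partial\bigcup g(w)$ both from inside $\bigcup g(w)$, where the next level's routing takes over, and from the surrounding complex one level up, where $\bigcup g(w)$ is one of the children being approached and where a complex may have just split into several children. As in Lemma~\ref{lem:CC-BFT} one uses $D_i(x)\to1$ on $\partial\bigcup g(w)$ and $D_{i-1}(x)\to0$ on the same set, and it is exactly the coherence of the distinguished paths (with the global thread pinned down by $p$) that forces the two one-sided limits to agree. Once (i)--(iii) are in hand, $\Fix_n(f)=A$ follows, and since every choice above was made from the inputs by an algorithm, $(A,C)\mapsto f$ is a multi-valued computable right inverse of $h$.
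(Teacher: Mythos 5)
Your first half (computability of the juxtaposition, via Proposition~\ref{prop:Con-Fix} and the computability of $\Fix_n$) is exactly the paper's argument, and your framing of what the right inverse must produce is correct. But your construction of the right inverse diverges from the paper's and is left with a real gap. The paper never attempts a single Ariadne-thread map whose fixed point set is all of $A$. It applies Lemma~\ref{lem:CC-BFT} to the \emph{connected} component $C$ alone --- the lemma exactly as already proved, no disconnected generalization --- to obtain $f\in\CC_n$ with $\Fix_n(f)=C$; it computes a continuous $g:[0,1]^n\to[0,1]$ with $g^{-1}\{0\}=A$; and it forms the convex combination $h(x):=(1-g(x))x+g(x)f(x)$. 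Being a convex combination of $\id$ and $f$, $h$ is a continuous self-map of $[0,1]^n$, and since $h(x)-x=g(x)\bigl(f(x)-x\bigr)$, we get $\Fix_n(h)=A\cup C=A$. Every step is uniformly computable in $(A,C)$, and the disconnectedness of $A$ is never confronted directly: $C$ carries the ``no spurious fixed points'' burden, while $g$ simply switches $h$ off on the rest of $A$.

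Your proposal instead tries to route the displacement field inward over the whole tree representing $A$. You flag continuity as ``the main obstacle,'' but there is an earlier and more basic one: when a level-$i$ complex $\bigcup g(w)$ has two or more non-empty children, there is no continuous, nowhere-vanishing choice of ``which child's path the target slides along'' for $x$ in $\bigcup g(w)\setminus A_{i+1}$. Any continuous assignment must transition between children, and in the transition region the field either vanishes off $A$ or the target collides with $x$; note also that your paths from $y_{wj}$ to $y_w$ live in $\bigcup g(w)\In A_i$, not inside the deeper level, whereas in Lemma~\ref{lem:CC-BFT} the target $p_{i+2}(D_{i+1}(x))$ is pinned inside $A_{i+2}$ precisely so that it cannot equal $x$. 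The non-cancellation argument for $\sum_j g_j(x)$ also relied on all summands with $j\geq m$ being parallel, which branching destroys. None of this is resolved. The way out is that this hard direct construction is unnecessary: once Lemma~\ref{lem:CC-BFT} is applied to $C$, the convex-combination blend with $g^{-1}\{0\}=A$ does the rest for free.
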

\begin{proof}
It follows directly from Proposition~\ref{prop:Con-Fix} and the fact that $\Fix_n$ is computable
that $(\Fix_n,\Con_n\circ\Fix_n)$ is computable for all $n\in\IN$.
We now describe how a right inverse $R:\In\AA_n\times\AA_n\mto\CC_n$ can be computed.
Firstly, given $(A,C)$ such that $A\in\AA_n$ and $C$ is a connectedness component of $A$,
we can find some $f\in\CC_n$ such that $\Fix_n(f)=C$ following the algorithm that is specified
in the proof of Lemma~\ref{lem:CC-BFT}. We can also find a continuous $g:[0,1]^n\to[0,1]$ such that
$g^{-1}\{0\}=A$ (see \cite{BW99}). Then we can also compute a continuous $h$ with
\[h(x):=(1-g(x))x+f(x)g(x)\]
and since this is a convex combination of $\id$ and $f$, it follows that $h$ is actually a continuous function $h:[0,1]^n\to[0,1]^n$. 
Finally, 
\[h(x)=x\iff(f(x)-x)g(x)=0\iff x\in C\cup A=A.\]
That is, $\Fix_n(h)=A$. Hence the function $R$ with $(A,C)\mapsto h$ is a suitable computable
right inverse of $(\Fix_n,\Con_n\circ\Fix_n)$.
\end{proof}

Roughly speaking, a closed set $A\in\AA_n$ together with one of its connectedness components is as
good as a continuous function $f\in\CC_n$ with $A$ as set of fixed points.
As a non-uniform corollary we obtain immediately Miller's original result \cite[Theorem~2.6.1]{Mil02a}.

\begin{corollary}[Fixable sets, J.S.\ Miller 2002]
A set $A\In[0,1]^n$ is the set of fixed points of a computable function $f:[0,1]^n\to[0,1]^n$
if and only if it is non-empty and co-c.e.\ closed and contains a co-c.e.\ closed connectedness component.
\end{corollary}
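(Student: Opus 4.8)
The plan is to read off both implications as immediate non-uniform consequences of Theorem~\ref{thm:fixable}, using only the elementary observation that a computable multi-valued map sends a computable point of its domain to a set that again contains a computable point. No new construction is needed beyond the ones already carried out in the proofs of Proposition~\ref{prop:Con-Fix} and Theorem~\ref{thm:fixable}.

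For the ``only if'' direction I would argue as follows. Suppose $A$ is the fixed point set of a computable $f\in\CC_n$. Non-emptiness of $A$ is precisely the Brouwer Fixed Point Theorem. Since $\Fix_n$ is computable and $f$ is a computable point of $\CC_n$, the set $A=\Fix_n(f)$ is a computable point of $\AA_n$, which is to say that $A$ is co-c.e.\ closed. Moreover, by Proposition~\ref{prop:Con-Fix} the map $\Con_n\circ\Fix_n$ is computable, so $(\Con_n\circ\Fix_n)(f)=\Con_n(A)$, being the image of the computable point $f$, contains a computable point $C$; by definition of $\Con_n$ this $C$ is a connectedness component of $A$, and being a computable point of $\AA_n$ it is co-c.e.\ closed. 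Hence $A$ meets all three conditions.

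For the ``if'' direction, assume $A\In[0,1]^n$ is non-empty, co-c.e.\ closed, and has a co-c.e.\ closed connectedness component $C$. Then $(A,C)$ is a computable point of $\AA_n\times\AA_n$, and since $C$ is a connectedness component of $A$ it lies in the domain of the multi-valued computable right inverse $R$ of $(\Fix_n,\Con_n\circ\Fix_n)$ furnished by Theorem~\ref{thm:fixable}. Consequently $R(A,C)$ contains a computable $f\in\CC_n$, and because $R$ is a right inverse the first component of the pair $(\Fix_n,\Con_n\circ\Fix_n)(f)$ equals $A$, i.e.\ $\Fix_n(f)=A$. Thus $A$ is the fixed point set of a computable function, as required.

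I do not anticipate a genuine difficulty here, since all of the work is contained in Theorem~\ref{thm:fixable}; the only two points deserving a moment's attention are that a co-c.e.\ closed subset of $[0,1]^n$ is by definition a computable point of $\AA_-([0,1]^n)$ (and a non-empty such set is a computable point of $\AA_n$), and that the hypothesis ``$C$ is a connectedness component of $A$'' is precisely the domain condition placing $(A,C)$ in the domain of the right inverse $R$ constructed in the proof of Theorem~\ref{thm:fixable}.
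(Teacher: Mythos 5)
Your proposal is correct and matches the paper's intent exactly: the paper states that the corollary follows ``immediately'' as the non-uniform content of Theorem~\ref{thm:fixable}, and your write-up supplies precisely that routine unpacking, using the observation that a computable multi-valued map carries a computable point of its domain to a set containing a computable point. Both the ``only if'' direction (via computability of $\Fix_n$ and $\Con_n\circ\Fix_n$) and the ``if'' direction (via the computable right inverse $R$) are handled as the paper intends.
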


We can also derive other interesting results from Theorem~\ref{thm:fixable}.
For instance we can derive an upper bound on how complex a continuous functions needs to be that 
has an arbitrary given non-empty co-c.e.\ closed set as fixed point set.

\begin{corollary}
Let $A\In[0,1]^n$ be a non-empty co-c.e.\ closed set. Then there is a
continuous function $f:[0,1]^n\to[0,1]^n$ that is low as a point in $\CC_n$ and has $A$ as fixed point set.
\end{corollary}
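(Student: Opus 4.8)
The plan is to reduce this to Theorem~\ref{thm:fixable} combined with the Uniform Low Basis Theorem (Fact~\ref{fact:low-basis}), exactly in the spirit of the preceding corollary about fixable sets.

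First I would observe that by Theorem~\ref{thm:fixable} the pair $(\Fix_n,\Con_n\circ\Fix_n)$ has a computable multi-valued right inverse $R:\In\AA_n\times\AA_n\mto\CC_n$, so that for every non-empty co-c.e.\ closed $A\In[0,1]^n$ it suffices to produce a connectedness component $C$ of $A$ that is low as a point in $\AA_n$, and then the function $f:=R(A,C)$ will be low in $\CC_n$ (low points are closed under composition with computable maps). So the task reduces to: given a co-c.e.\ closed $A$, compute a name of a low connectedness component. Here I would invoke Proposition~\ref{prop:complexes} to compute a tree $(T,f)$ of non-empty rational complexes representing $A$, together with a bound $b$; the body $[T]$ is in computable bijection with the connectedness components of $A$ via Lemma~\ref{lem:connected-component}, and the map $p\mapsto\bigcap_{i}\bigcup f(p|_i)$ is computable from $(T,f,p)$.

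Next I would apply the Uniform Low Basis Theorem. Since $T$ is finitely branching with a computable bound $b$, one can re-encode $[T]$ as (the set of paths through) a subtree of Cantor space in a computable and computably-invertible way, so $[T]\in\AA_-(\{0,1\}^\IN)$ is computable from $A$. By Fact~\ref{fact:low-basis}, $\C_{\{0,1\}^\IN}\leqSW\L$, and $\L=\J^{-1}\circ\lim$ with $\lim$ applied to a computable input yields a limit-computable, hence low, path $p\in[T]$; more precisely there is a computable $q$ (computed from $A$) with $\L(q)=p$, and since $\L$ factors through the jump, $p$ is low uniformly in $A$. Then $C:=\bigcap_{i}\bigcup f(p|_i)$ is a connectedness component of $A$, and as the map $(T,f,p)\mapsto C\in\AA_n$ is computable, $C$ is a low point of $\AA_n$. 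Finally $f:=R(A,C)\in\CC_n$ is computed from $(A,C)$ by a computable procedure, hence $f$ is low as a point of $\CC_n$, and by the right-inverse property $\Fix_n(f)=A$.

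The only slightly delicate point is the bookkeeping that ``low is preserved under composition with computable functions'' — i.e.\ if $p=\L(q)$ with $q$ computable and $F$ is computable, then $F(p)$ is again low. This is standard (a computable function applied to a low oracle produces a low oracle, since $(F(p))'\leqT p'\equivT\emptyset'$) but one should state it so that the chain ``$A$ co-c.e.\ $\Rightarrow$ $(T,f,b)$ computable $\Rightarrow$ $[T]\in\AA_-(\Cantor)$ computable $\Rightarrow$ low path $p$ $\Rightarrow$ low $C\in\AA_n$ $\Rightarrow$ low $f\in\CC_n$'' is rigorous; none of these steps is hard, but the uniformity of the Low Basis Theorem (Fact~\ref{fact:low-basis}) is what makes the implication go through cleanly rather than merely non-uniformly. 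I expect no genuine obstacle beyond assembling these ingredients.
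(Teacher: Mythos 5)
Your proof is correct and follows essentially the same route the paper intends: combine the computable right inverse of $(\Fix_n,\Con_n\circ\Fix_n)$ from Theorem~\ref{thm:fixable} with the Uniform Low Basis Theorem (Fact~\ref{fact:low-basis}) to produce a low connectedness component $C$, then feed $(A,C)$ into the right inverse. The only cosmetic difference is that you re-derive the $\Con_n\leqSW\WKL$ direction from Proposition~\ref{prop:complexes} and Lemma~\ref{lem:connected-component} rather than invoking Theorem~\ref{thm:Con} directly, and your phrase ``limit-computable, hence low'' is slightly loose (limit-computable does not imply low in general; what makes $p$ low is precisely that $p'=\lim(q)\leqT\emptyset'$, which is what you in fact use).
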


This result follows from an application of the Uniform Low Basis Theorem \cite[Theorem~8.3]{BBP12}
since $\Fix_n$ has a right inverse that is reducible to $\WKL$ by Theorems~\ref{thm:fixable}
and \ref{thm:Con}.\footnote{We mention that a function $f$ that is low as a point in $\CC_n$
is not necessarily low as a function in the sense that $f\leqSW\L$ (where $\L=\J^{-1}\circ\lim$ is the composition of
the inverse of the Turing jump $\J$ and the limit operation), but one only obtains $f\leqW\L$ here
(see \cite{BBP12,BGM12} for a discussion of low functions).}

\section{Lipschitz Continuity}
\label{sec:Lipschitz}

In this section we want to discuss the question whether Lipschitz continuity of a function $f:[0,1]^n\to[0,1]^n$
simplifies finding fixed points in any way, compared to a function $f$ that is just continuous.\footnote{We would like to thank Ulrich Kohlenbach for raising this question.}
We recall that a function $f:[0,1]^n\to[0,1]^n$ is called {\em Lipschitz continuous} with constant $L\geq0$, if
\[||f(x)-f(y)||\leq L\cdot||x-y||\]
holds for all $x,y\in[0,1]^n$. As before, $||\;||$ denotes the maximum norm on Euclidean space.
We are going to prove in this section that a Lipschitz constant $L>1$ as an extra constraint does not simplify finding fixed points.
We first need a refined version of Proposition~\ref{prop:connected-path-connected}.

\begin{proposition}
\label{prop:largedistanceshortpath}
Given a non-empty connected closed set $A \subseteq [0,1]^n$ we can compute sequences of distance functions $(d_{A_i})_{i \in \IN}$ and $(d_{A^\cc_i})_{i \in\IN}$
for non-empty closed sets $A_i \subseteq [-1, 2]^n$, a sequence $(x_i)_{i \in\IN}$ of points in $[-1, 2]^n$ and a sequence 
$(p_i)_{i \in\IN}$ of paths $p_i:[0,1]\to[-1, 2]^n$ such that for all $i\in\IN$:
\begin{enumerate}
\item $\bigcap_{i=0}^\infty A_i = A$,
\item $d(A_{i+1}, A_i^\cc) \geq 2^{-i-1}$,
\item $x_i \in A_i$,
\item $\range(p_i)\In A_i$ and $p_i(0) = x_i$, $p_i(1) = x_{i+1}$,
\item $d(\range(p_i), A_i^\cc) \geq 2^{-i}$
\item $p_i:[0,1]\to[-1,2]^n$ is Lipschitz continuous with constant $L=1$.
\end{enumerate}
\begin{proof}
We start as in the proof of Proposition~\ref{prop:connected-path-connected} with a tree $(T,f)$ of rational complexes that represent $A$
and from which we compute sequences of distance functions $(d_{A_i})_{i \in \IN}$ and $(d_{A^\cc_i})_{i \in\IN}$ satisfying condition (1).
Then we compute a sequence of points $(x_i)_{i\in\IN}$ and paths $(p_i)_{i\in\IN}$ linking them satisfying conditions (3) and (4) as in the proof of Lemma~\ref{lem:CC-BFT}. 

The construction of the $p_i$ allows us to choose a constant--speed parameterization, i.e., a $p_i$ that is Lipschitz continuous with constant $L_i \in \mathbb{N}$, and moreover 
we can compute a sequence $(L_i)_{i\in\IN}$ of corresponding constants.
Now for any $(i, j)\in\IN^2$ with $j < L_i$, define $A_{i,j} := A_i$, $x_{i,j} := p_i(L_i^{-1}\cdot j)$ and $p_{i,j}(t) := p_i(L_i^{-1}\cdot(j + t))$ for $t\in[0,1]$. 
The purpose of these refinements is to obtain $p_{i,j}$ that are Lipschitz continuous for constant $L=1$.
Now we can determine new sequences $(A'_i)_{i\in\IN}$, $(x'_i)_{i\in\IN}$, $(p'_i)_{i\in\IN}$ by a lexicographic ordering of the double sequences 
$(A_{i,j})_{i\in\IN,j<L_i}$, $(x_{i,j})_{i\in\IN,j<L_i}$ and $(p_{i,j})_{i\in\IN,j<L_i}$, respectively.
These are clearly computable from the original ones. Moreover, the conditions (1), (3) and (4) remain unaffected, while condition (6) is now satisfied, too for the sequence $(A'_i)_{i\in\IN}$.

In order to satisfy condition (2), the construction employed in the proof of Proposition~\ref{prop:complexes} to obtain a tree (collapsing to a single path here) of rational complexes as a name for a closed set is reused. 
Firstly, we determine an enumeration of rational balls $B(c_i,r_i)$ such that $A=Q\cap(\bigcup_{i=0}^\infty B(c_i,r_i))^\cc$ with $Q:=[-1,2]^n$
with the additional property that $B(c_i,r_i)\In (A'_i)^\cc$. Now we construct a new tree $(T',f')$ of rational complexes that represents $A$ following the algorithm 
in the proof of Proposition~\ref{prop:complexes} with this particular enumeration of balls $B(c_i,r_i)$ and we use again the method in the proof of  Proposition~\ref{prop:connected-path-connected}
to obtain sequences of distance functions $(d_{A''_i})_{i \in \IN}$ and $(d_{(A_i'')^\cc})_{i \in\IN}$
for non-empty closed sets $A''_i \subseteq [-1, 2]^n$.
The additional property $B(c_i,r_i)\In (A'_i)^\cc$ guarantees that the new sets are supersets of the original ones, 
i.e., $A'_i\In A''_i$ for all $i\in\IN$. The extra margin of $2^{-i}$ provided by step (3) of the construction in the proof of Proposition~\ref{prop:complexes}
even guarantees that $d(A'_i,(A''_i)^\cc)\geq2^{-i}$ and hence, in particular, $d(\range(p_i), (A''_i)^\cc) \geq 2^{-i}$. 
An inspection of step (2) of that construction reveals that we
also obtain $d(A''_{i+1},(A''_i)^\cc)\geq2^{-i-1}$ for all $i\in\IN$.
Hence, the sequence $(A''_i)_{i\in\IN}$ still satisfies the corresponding 
conditions (1), (3), (4) and (6) and it additionally also satisfies conditions (2) and (5).
\end{proof}
\end{proposition}

With some extra calculations we can now prove a refined version of Lemma~\ref{lem:CC-BFT}
for arbitrary Lipschitz constant $L>1$.	

\begin{theorem}[Lipschitz continuity]
\label{thm:Lipschitz}
Given a non-empty connected closed set $A\In[0,1]^n$ and a real number $L>1$
we can compute a continuous function ${f:[0,1]^n\to[0,1]^n}$ that is Lipschitz continuous with constant $L$
and such that $A$ is the set of fixed points of $f$. 
\end{theorem}
\begin{proof}
Let $0<\varepsilon<1$. 
Suppose we can find a continuous function $f$ that is Lipschitz continuous with constant $L>0$ and has $A$ as set of fixed points.
Then we can compute the function $\id + \frac{\varepsilon}{1 + L}(f - \id)$ that has the same fixed points as $f$ and is Lipschitz continuous with constant $1+\varepsilon$.
Hence, it is sufficient to prove that we can compute a function $f$ that is Lipschitz continuous with constant $L=6$.
We prove that if the construction of the proof of Lemma~\ref{lem:CC-BFT} is carried out with the conditions provided by Proposition~\ref{prop:largedistanceshortpath}, 
then the resulting function $f$ is such a function. 

First, we provide a simplified expression for $f(x)$. We use the abbreviation $P_x := \frac{p_{n+1}D_n(x) - x}{||p_{n+1}D_n(x) - x||}$.
If $x \in A$, then $f(x) = x$. If $x \in A_{n} \setminus A_{n +1}$ with $n>0$, then
\begin{eqnarray*}
f(x) 
& = & x + 2^{-4}g_n(x) + 2^{-4}\sum_{i = n+1}^\infty \left (2^{-3i-1} \frac{\sum_{j = n}^{i-1} g_j(x)}{||\sum_{j = n}^{i-1} g_j(x)||} \right ) \\
& = & x + 2^{-4}g_n(x) + 2^{-4}\sum_{i = n+1}^\infty \left (2^{-3i-1} \frac{g_n(x)}{||g_n(x)||} \right ) \\
& = & x + 2^{-4}g_n(x) + 2^{- 4}C_{n+1} \frac{g_n(x)}{||g_n(x)||} \\
& = & x + 2^{-4}\left (2^{-3n -1}D_{n}(x) + C_{n+1} \right) P_x.
\end{eqnarray*}

By continuity, this expression remains true for $x \in \overline{A_{n} \setminus A_{n +1}}$.

Now we want to estimate a Lipschitz constant for $f$ and we distinguish a number of cases. 

\underline{1.\ Case:} $x \in A_n \setminus A_{n+1}$ with $n>0$, $y \in A$. In this situation, we obtain $f(y)=y$ and $||x-y||\geq d(A_{n+2}, A_{n+1}^\cc) \geq 2^{-n-2}$.
We recall that $C_{n+1}\leq 2^{-3n-3}$ and we estimate: 
\begin{eqnarray*}
||f(x) - f(y)|| & = & ||x + 2^{-4}\left (2^{-3n -1}D_{n}(x) + C_{n+1} \right) P_x - y||\\
& \leq & ||x - y|| + 2^{-3n-5}||\left (D_{n}(x) + 1 \right)P_x|| \\
& \leq & ||x - y|| + 2^{-3n-4}\\
&\leq & 2||x - y||.
\end{eqnarray*}

\underline{2.\ Case:} $x, y \in \overline{A_n \setminus A_{n+1}}$ with $n>0$. We use $d(A_n^\cc, A_{n+1})\geq2^{-n-1}$ and we will need the following bound:
\begin{eqnarray*} 
&& ||D_n(x) - D_n(y)||\\
& = & \left|\left|\frac{d(x, A_{n+1})}{d(x, A_{n+1}) + d(x, A_n^\cc)} - \frac{d(y, A_{n+1})}{d(y, A_{n+1}) + d(y, A_n^\cc)}\right|\right| \\
& = & \left|\left|\frac{\left (d(x, A_{n+1}) - d(y, A_{n+1}) \right )d(y, A_n^\cc) + \left (d(y,A_n^\cc) - d(x, A_n^\cc)\right) d(y, A_{n+1})}{\left (d(x, A_{n+1}) + d(x, A_n^\cc)\right )\left (d(y, A_{n+1}) + d(y, A_n^\cc) \right)}\right|\right| \\
& \leq & \frac{||\left (d(x, A_{n+1}) - d(y, A_{n+1}) \right )d(y, A_n^\cc)|| + ||\left (d(y,A_n^\cc) - d(x, A_n^\cc)\right) d(y, A_{n+1})||}{||d(A_n^\cc, A_{n+1})||\cdot||\left (d(y, A_{n+1}) + d(y, A_n^\cc) \right)||}\\
& \leq & 2^{n+1}\cdot  \frac{d(x,y)d(y, A_n^\cc) + d(x,y) d(y, A_{n+1})}{d(y, A_{n+1}) + d(y, A_n^\cc) } \\
& \leq & 2^{n+1}||x - y||.
\end{eqnarray*}

Now we also use the abbreviation $N_x := ||p_{n+1}D_n(x) - x||$. Using the fact that $p_{n+1}$ is Lipschitz continuous with constant $1$, we obtain:
\begin{eqnarray*} 
||N_x - N_y|| 
&\leq& ||p_{n+1}D_n(x)-p_{n+1}D_n(y)||+||x-y||\\
&\leq& ||D_n(x)-D_n(y)||+||x-y||\\
&\leq& 2^{n+2}||x-y||.
\end{eqnarray*}

We note that $2^{-n-1} \leq N_x \leq 2$ since $d(\range(p_{n+1}),A_{n+1}^\cc)\geq2^{-n-1}$. 
For the same reason also $||p_{n+1}D_n(y) - x||\leq 2$.
Hence
\begin{eqnarray*} 
&& ||P_x - P_y|| \\
& = & N_x^{-1}N_y^{-1}||p_{n+1}D_n(x)N_y - xN_y - p_{n+1}D_n(y)N_x + N_xy|| \\
& \leq & 2^{2n+2}||N_y \left (p_{n+1}D_n(x) - p_{n+1}D_n(y) \right )+ (N_y - N_x)(p_{n+1}D_n(y) - x) - xN_x + N_xy|| \\
& \leq & 2^{2n+2}(N_x||x - y||+N_y||p_{n+1}D_n(x) - p_{n+1}D_n(y)||+ ||p_{n+1}D_n(y) - x||\cdot ||N_x - N_y||)  \\
& \leq & 2^{2n+3}(||x - y||  + ||D_n(x) - D_n(y)|| + ||N_x-N_y||)  \\
& \leq & 2^{2n+3}(1+2^{n+1}+2^{n+2})||x - y||\\
&\leq& 2^{3n+6}||x-y||.
\end{eqnarray*}

Since $2^{3n+1}C_{n+1} + D_n(y)\leq2$ and using the previous estimations, we finally obtain:
\begin{eqnarray*} 
&& ||f(x) - f(y)||\\ 
& \leq & ||x - y|| + 2^{-3n-5}||\left (D_{n}(x) + 2^{3n+1}C_{n+1} \right) P_x - \left (D_{n}(y) + 2^{3n+1}C_{n+1} \right) P_y|| \\
& = & ||x - y|| + 2^{-3n-5}||\left (D_n(x) - D_n(y) \right )P_x + \left (2^{3n+1}C_{n+1} + D_n(y)\right )\left (P_x - P_y\right ) || \\
& \leq & ||x - y|| + 2^{-3n-5}||D_n(x) - D_n(y)|| + 2^{-3n-4}||P_x - P_y|| \\
&\leq & (1 + 2^{-2n-4}+4)||x - y||\\
& \leq & 6||x - y||.
\end{eqnarray*}

\underline{3.\ Case:} $x, y \in [0,1]^n$ not satisfying the conditions from our first or second case. 
Without loss of generality we can assume $[0,1]^n\In A_1$.
The straight line from $x$ to $y$ either intersects $A$, or is composed of a finite number of line segments each fully included in some $\overline{A_i \setminus A_{i+1}}$ with $i>0$. 
In the former case, pick some $z$ from the intersection of the line and $A$. 
We obtain with the help of the estimations above 
\[||f(x) - f(y)|| \leq ||f(x) - f(z)|| + ||f(z) - f(y)|| \leq 2||x - z|| + 2||z - y|| = 2||x - y||.\] 
In the latter case, we obtain $||f(x)-f(y)||\leq 6||x-y||$ with a similar argument. In this case we use finitely many points $z_j$ where the line segments touch.
\end{proof}

If we denote by $\BFT_{n,L}$ the problem $\BFT_n$ restricted to functions that are Lip\-schitz continuous with constant $L$, 
then we can formulate our main result on Lip\-schitz continuous functions as follows (using Theorems~\ref{thm:BFT} and \ref{thm:Lipschitz}).

\begin{corollary}
$\BFT_{n,L}\equivSW\ConC_n$ for all $n\in\IN$ and $L>1$.
\end{corollary}

We mention that the problem $\BFT_{n,L}$ is obviously computable for $L<1$,
since fixed points of contractions are uniquely determined by the Banach Fixed Point Theorem.
The boundary case $L=1$ has been studied by Neumann~\cite[Theorem~5.8]{Neu15} in the context of more general versions of the Browder-G\"ohde-Kirk Fixed Point Theorem.
In this case $\BFT_{n,1}\equivW\XC_n$, where $\XC_n$ denotes convex choice for the space $[0,1]^n$, 
i.e., $\C_{[0,1]^n}$ restricted to convex sets. Convex choice was further studied by Le Roux and Pauly~\cite[Corollary~3.31]{LRP15a} and they proved
among other results that one actually obtains a strictly increasing chain of problems with increasing dimension, i.e., 
\[\ConC_1\equivW\XC_1\lW\XC_2\lW\XC_3\lW...\lW \C_{[0,1]}.\]
Hence, in general one has a trichotomy for the complexity of $\BFT_{n,L}$ in the cases $L<1, L=1$ and $L>1$.
In the one-dimensional case, one is left with a dichotomy since it follows from Neumann's result that 
\[\BFT_{1,1}\equivW\XC_1\equivW\ConC_1\equivW\BFT_1.\]

\section{Aspects of Dimension}
\label{sec:dimension}

In this section we want to discuss aspects of dimension of connected choice and the Brouwer Fixed Point Theorem.
Our main result is that connected choice is computably universal or complete from dimension three upwards 
in the sense that it is strongly equivalent to Weak K\H{o}nig's Lemma, which is one of the degrees of major importance.
In order to prove this result, we use the following geometric construction.

\begin{proposition}[Twisted cube]
\label{prop:twisted-cube}
The function 
\[T:\In\AA_-[0,1]\to\AA_3,A\mapsto(A\times[0,1]\times\{0\})\cup(A\times A\times[0,1])\cup([0,1]\times A\times\{1\})\]
is computable and maps non-empty closed sets $A\In[0,1]$
to non-empty pathwise connected closed sets $T(A)\In[0,1]^3$.
\end{proposition}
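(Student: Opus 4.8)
The plan is to verify the two assertions—computability of $T$ and the claim that $T$ maps non-empty closed sets to non-empty \emph{connected} closed sets—essentially by hand, since the construction is entirely explicit. First I would address computability. Each of the three pieces is built from $A$ by products with the fixed computable sets $[0,1]$, $\{0\}$, $\{1\}$, and finite unions. Product and finite union are computable operations on $\AA_-$ of Euclidean spaces (an open rational box meets $B\times C$ iff its projections meet $B$ and $C$; a box avoids a union iff it is covered by finitely many boxes avoiding each piece, which is recognizable from the negative data), so $T(A)$ is computable in $A$ as a point of $\AA_-([0,1]^3)$; moreover since $T(A)\subseteq[0,1]^3$ is closed and bounded it is compact, hence $T(A)\in\AA_3$ whenever $A\neq\emptyset$. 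The non-emptiness of $T(A)$ for non-empty $A$ is immediate: if $a\in A$ then $(a,0,0)\in A\times[0,1]\times\{0\}\subseteq T(A)$.

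The substantive point is connectedness. I would argue that $T(A)$ is path-connected, which suffices. Fix $a_0\in A$ and set $P:=\{a_0\}\times\{a_0\}\times[0,1]$, the vertical segment inside the middle slab $A\times A\times[0,1]$; it connects the point $(a_0,a_0,0)$ in the bottom face to $(a_0,a_0,1)$ in the top face. Now take an arbitrary point $x\in T(A)$ and exhibit a path in $T(A)$ from $x$ to a point of $P$. There are three cases according to which piece contains $x$. If $x=(a,t,0)$ with $a\in A$, $t\in[0,1]$: the horizontal segment $s\mapsto(a,s,0)$ from $t$ to $a_0$ stays in $A\times[0,1]\times\{0\}$ (the first coordinate is the fixed element $a\in A$), reaching $(a,a_0,0)$, and then $s\mapsto(s,a_0,0)$ from $a$ to $a_0$ stays in $A\times[0,1]\times\{0\}$? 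No—here the \emph{second} coordinate is $a_0\in A$ and the first ranges over a path from $a$ to $a_0$ in $[0,1]$, which need not lie in $A$; but the point $(s,a_0,0)$ lies in $A\times[0,1]\times\{0\}$ only if $s\in A$. The correct move is instead to go first from $(a,t,0)$ along $s\mapsto(a,s,0)$ to $(a,a_0,0)$, which lies in $A\times A\times\{0\}\subseteq A\times A\times[0,1]$, then climb $s\mapsto(a,a_0,s)$ inside the middle slab to $(a,a_0,1)\in[0,1]\times A\times\{1\}$, then slide $s\mapsto(s,a_0,1)$ (first coordinate free in $[0,1]$, second fixed $=a_0\in A$, third $=1$) to $(a_0,a_0,1)\in P$. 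Thus every bottom-face point connects to $P$. By the evident symmetry of the construction under swapping the roles of the two $A$-coordinates together with the bottom/top faces (the middle slab is symmetric, and $A\times[0,1]\times\{0\}\leftrightarrow[0,1]\times A\times\{1\}$), every top-face point $(t,a,1)$ likewise connects to $P$: slide to $(a_0,a,1)\in A\times A\times\{1\}$, descend to $(a_0,a,0)\in A\times[0,1]\times\{0\}$, slide to $(a_0,a_0,0)\in P$. Finally, if $x=(a,b,t)$ lies in the middle slab $A\times A\times[0,1]$, the segment $s\mapsto(a,b,s)$ from $t$ down to $0$ lands in $A\times A\times\{0\}$, a point of the bottom face already handled. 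Hence every point of $T(A)$ is joined by a path in $T(A)$ to $P$, so $T(A)$ is path-connected, hence connected.

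I expect the only real obstacle to be bookkeeping: making sure each segment used in the path actually lies in $T(A)$, i.e.\ that whenever a coordinate is allowed to vary freely it does so in a coordinate slot that the relevant piece permits to range over all of $[0,1]$, and whenever a coordinate must stay in $A$ it is held fixed at an element of $A$. The asymmetry between the three pieces—the first and third each constrain only one of the first two coordinates to lie in $A$ while the middle constrains both—is exactly what makes the "go up through the middle slab" routing work and rules out naive horizontal shortcuts. Once the case analysis above is written out carefully, the proof is complete; I would also remark that one could alternatively phrase it as: the middle slab is connected and meets each of the outer two pieces (the outer pieces being connected as products of connected sets with $A$, which is \emph{not} assumed connected—so in fact the outer pieces need not be connected, and the path argument, or a direct argument that every connected component meets the middle slab, is genuinely needed rather than a two-line union-of-overlapping-connected-sets argument).
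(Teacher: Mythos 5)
The paper states Proposition~\ref{prop:twisted-cube} without proof, regarding it as immediate from the geometric picture (the discussion that follows the proposition only explains how $T$ is used in the reduction, not why $T(A)$ is connected). Your proof therefore has nothing to be compared against, but it is correct, and it supplies exactly the verification the paper omits.

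A few remarks on the details. The computability and closedness claims are handled correctly: $T(A)$ is a finite union of products of $A$ with the computable sets $[0,1]$, $\{0\}$, $\{1\}$, and products and finite unions are computable on $\AA_-$ of Euclidean cubes, so $A\mapsto T(A)$ is computable as a map into $\AA_-([0,1]^3)$; and $T(A)$ is non-empty whenever $A$ is, as witnessed by $(a,0,0)$ for any $a\in A$. For connectedness your path-by-path check is sound, and the crucial point — which you correctly flag — is that none of the three pieces need be connected on its own, since $A$ is an arbitrary non-empty closed set; so a naive ``union of overlapping connected pieces'' argument is unavailable and one really does need to route every point through the reference segment $P=\{a_0\}\times\{a_0\}\times[0,1]\subseteq A\times A\times[0,1]$. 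Your three cases do this carefully: a bottom-face point $(a,t,0)$ goes $(a,t,0)\to(a,a_0,0)$ in the bottom face, then $(a,a_0,0)\to(a,a_0,1)$ in the middle slab, then $(a,a_0,1)\to(a_0,a_0,1)$ in the top face; a top-face point is handled symmetrically; and a middle-slab point drops to the bottom face along the third coordinate. Each leg stays in the piece claimed because exactly the coordinates that must lie in $A$ are held fixed at elements of $A$ while only the free coordinate varies. The self-correction in your first case (catching the illegal horizontal shortcut $(a,a_0,0)\to(a_0,a_0,0)$, which would force the first coordinate to pass outside $A$) is exactly the bookkeeping that the construction's asymmetry demands. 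In short: the proof is complete and correct, and it makes explicit what the paper leaves implicit.
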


Here tuples $(x_1,x_2,x_3)\in T(A)$ have the property that at least one of the
first two components provide a solution $x_i\in A$, and the third component lets us
pick one that surely does. If $x_3$ is close to $1$, then
surely $x_2\in A$ and if $x_3$ is close to $0$, then surely $x_1\in A$. If $x_3$
is neither close to $0$ nor $1$, then both $x_1,x_2\in A$. 
Hence, there is a computable function $H$ such that $\C_{[0,1]}=H\circ\ConC_3\circ T$,
which proves $\C_{[0,1]}\leqSW\ConC_3$. Together with Theorem~\ref{thm:BFT} and Fact~\ref{fact:WKL}
we obtain the following conclusion.

\begin{theorem}[Completeness of three dimensions]
\label{thm:dimension-three}
For $n\geq 3$ we obtain
\[\ConC_n\equivSW\BFT_n\equivSW\BFT_\infty\equivSW\WKL\equivSW\C_{[0,1]}.\]
\end{theorem}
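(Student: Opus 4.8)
The plan is to chain together the reductions that are already in place. By Theorem~\ref{thm:BFT} we have $\BFT_n\equivSW\ConC_n$ for every $n$, so it suffices to locate $\ConC_n$ for $n\geq 3$. The upper bound $\ConC_n\leqSW\WKL$ is immediate, since $\ConC_n$ is a restriction of $\C_{[0,1]^n}$ and Fact~\ref{fact:WKL} gives $\C_{[0,1]^n}\equivSW\WKL$ for $n\geq 1$; hence $\BFT_n\leqW\WKL$ as well, and since $\WKL$ is a cylinder this is a strong reduction. For the lower bound I would use the twisted cube construction of Proposition~\ref{prop:twisted-cube}: given a non-empty closed $A\In[0,1]$, compute $T(A)\in\AA_3$, apply $\ConC_3$ to obtain a point $(x_1,x_2,x_3)\in T(A)$, and then read off a point of $A$ by a computable post-processor $H$. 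Concretely, by the geometric analysis following Proposition~\ref{prop:twisted-cube}, if $x_3\leq \tfrac12$ then $x_1\in A$ and if $x_3\geq\tfrac12$ then $x_2\in A$, so $H(x_1,x_2,x_3)$ can output $x_1$ or $x_2$ according to a comparison of $x_3$ with $\tfrac12$ (on the boundary case $x_3=\tfrac12$ both components lie in $A$, so either choice is correct). This yields $\C_{[0,1]}=H\circ\ConC_3\circ T$, hence $\C_{[0,1]}\leqSW\ConC_3$, and since $\C_{[0,1]}\equivSW\WKL$ we get $\WKL\leqSW\ConC_3\leqSW\ConC_n$ for $n\geq 3$ (the last step because the canonical embedding of $[0,1]^3$ as a face of $[0,1]^n$ is computable with computable partial inverse and carries connected sets to connected sets, so $\ConC_3\leqSW\ConC_n$).

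Combining, $\ConC_n\equivSW\WKL\equivSW\C_{[0,1]}$ for all $n\geq 3$, and then Theorem~\ref{thm:BFT} transfers this to $\BFT_n$. It remains to insert $\BFT_\infty$ into the chain. The reduction $\WKL\leqSW\BFT_\infty$ follows from $\WKL\equivSW\BFT_3\leqSW\BFT_\infty$, where the last reduction is obtained by viewing a continuous self-map of $[0,1]^3$ as a continuous self-map of the Hilbert cube $[0,1]^\IN$ that acts as the identity on all but the first three coordinates; such an extension is computable from $f$, its fixed point set is $\Fix_3(f)\times\{0\}^{\IN}$ (with any fixed computable point in the tail, e.g.\ all zeros), and a fixed point of the extension projects computably onto a fixed point of $f$. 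For the converse $\BFT_\infty\leqSW\WKL$ one can appeal to the meta-theorem of \cite{BG11a} already cited in the introduction (which gives $\BFT_n\leqW\WKL$ for each finite $n$ by the same argument applied on the Hilbert cube, using that $[0,1]^\IN$ is a computably compact computable metric space and the fixed point set is a non-empty co-c.e.\ closed set that can be located inside a binary tree); since $\WKL$ is a cylinder this upgrades to $\leqSW$.

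I expect the only genuinely delicate point to be verifying the two claimed properties of $T$ in Proposition~\ref{prop:twisted-cube}—that $T$ is computable as a map $\AA_-[0,1]\to\AA_3$ and that $T(A)$ is connected whenever $A$ is non-empty—together with the precise form of the post-processor $H$; but the connectedness is visible from the decomposition of $T(A)$ into three pieces glued along the common "staircase" $A\times A\times\{0\}$, $A\times A\times\{1\}$ and $A\times A\times[0,1]$, and the computability of $T$ is routine once closed sets are handled via the representation $\psi_-$ (intersections, products, and finite unions of co-c.e.\ closed sets being computable). All the rest is assembling reductions that are already established earlier in the paper, so the proof is short.
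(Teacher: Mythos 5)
Your proposal is correct and follows essentially the same route as the paper: use Theorem~\ref{thm:BFT} to transfer between $\BFT_n$ and $\ConC_n$, the restriction argument plus Fact~\ref{fact:WKL} for the upper bound, the twisted cube of Proposition~\ref{prop:twisted-cube} together with a post-processor $H$ for the lower bound $\C_{[0,1]}\leqSW\ConC_3$, and then embeddings to handle $n\geq 3$ and $\BFT_\infty$. Two small points of care: first, $H$ cannot literally decide whether $x_3\leq\tfrac12$ or $x_3\geq\tfrac12$, since equality with a rational is not decidable; the correct formulation is to run the semi-decisions $x_3<\tfrac34$ and $x_3>\tfrac14$ in parallel, output $x_1$ if the former fires and $x_2$ if the latter fires (at least one must, and each conclusion is sound because it rules out $x_3=1$ respectively $x_3=0$), which is the spirit of what you wrote. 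Second, your description of the extension for $\BFT_3\leqSW\BFT_\infty$ is internally inconsistent: an extension that is the identity on the tail coordinates has fixed point set $\Fix_3(f)\times[0,1]^{\IN}$, whereas the paper's extension $f\mapsto\bigl((x_i)\mapsto(f(x_1,\dots,x_n),0,0,\dots)\bigr)$ has fixed point set $\Fix_3(f)\times\{0\}^{\IN}$; either extension works for the reduction, but you should pick one and state its fixed point set correctly.
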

\begin{proof}
We note that the reduction $\ConC_n\leqSW\C_{[0,1]^n}$ holds for all $n\in\IN$,
since connected choice is a just a restriction of closed choice and the equivalences 
\[\C_{[0,1]^\IN}\equivSW\C_{[0,1]^n}\equivSW\C_{[0,1]}\equivSW\WKL\]
are known for all $n\geq1$ by Fact~\ref{fact:WKL}.
The equivalence $\ConC_n\equivSW\BFT_n$ has been proved in Theorem~\ref{thm:BFT}
for all $n\in\IN$. We mention that $\BFT_n\leqSW\BFT_\infty$ can be proved as follows.
The function 
\[K:\CC([0,1]^n,[0,1]^n)\to\CC([0,1]^\IN,[0,1]^\IN),f\mapsto((x_i)\mapsto(f(x_1,...,x_n),0,0,0,...))\]
is computable and together with the projection on the first $n$--coordinates this yields
the reduction $\BFT_n\leqSW\BFT_\infty$. Since 
\[\CC([0,1]^\IN,[0,1]^\IN)\to\AA_-([0,1]^\IN),f\mapsto(f-\id_{[0,1]^\IN})^{-1}\{0\}\]
is computable too, it follows that $\BFT_\infty\leqSW\C_{[0,1]^\IN}$ holds.
Finally, $\C_{[0,1]}\leqSW\ConC_n$ follows for $n\geq3$ from Proposition~\ref{prop:twisted-cube}.
\end{proof}

In particular, we get the Baigger counterexample for dimension $n\geq3$ as a consequence
of Theorem~\ref{thm:dimension-three}.
A superficial reading of the results of Orevkov~\cite{Ore63} and Baigger~\cite{Bai85} can lead to the wrong
conclusion that they actually provide a reduction of Weak K\H{o}nig's Lemma to the Brouwer Fixed Point Theorem
$\BFT_n$ of any dimension $n\geq2$. However, this is only correct in a non-uniform way and the corresponding
uniform result will be settled in Section~\ref{sec:dim2} with different methods and does not follow from the known constructions.
The Orevkov-Baigger result is built on the following fact.

\begin{proposition}[Mixed cube]
\label{prop:mixed-cube}
The function 
\[M:\In\AA_-[0,1]\to\AA_2,A\mapsto(A\times[0,1])\cup([0,1]\times A)\]
is computable and maps non-empty closed sets $A\In[0,1]$
to non-empty pathwise connected closed sets $M(A)\In[0,1]^2$.
\end{proposition}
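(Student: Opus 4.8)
The plan is to establish the two assertions of the proposition separately: that $M$ is computable as a map from negative information about $A$ to negative information about $M(A)$, and that $M(A)$ is a non-empty connected closed subset of $[0,1]^2$ whenever $A\In[0,1]$ is non-empty and closed. The connectedness is the geometric heart of the Orevkov--Baigger construction mentioned in the introduction, but it is a soft point-set topology argument; the computability is routine.

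For computability, the first step is the elementary complement identity
\[
 [0,1]^2\setminus M(A)=\bigl(([0,1]\setminus A)\times[0,1]\bigr)\cap\bigl([0,1]\times([0,1]\setminus A)\bigr)=([0,1]\setminus A)\times([0,1]\setminus A),
\]
that is, $M(A)^{\cc}=A^{\cc}\times A^{\cc}$ inside $[0,1]^2$. Given a $\psi_-$-name of $A$, i.e.\ an enumeration of standard rational open balls $B_{p(0)},B_{p(1)},\dots$ of $[0,1]$ with $[0,1]\setminus A=\bigcup_i B_{p(i)}$, we therefore have $[0,1]^2\setminus M(A)=\bigcup_{i,j}\bigl(B_{p(i)}\times B_{p(j)}\bigr)$. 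Each $B_{p(i)}\times B_{p(j)}$ is an open rational box, hence effectively a union of standard rational open balls of $[0,1]^2$ (which, in the maximum norm, are open cubes); enumerating all of these yields a $\psi_-$-name of $M(A)$, so $M$ is computable. Alternatively one can observe that $A\mapsto A\times[0,1]$ and $A\mapsto[0,1]\times A$ are computable maps $\AA_-([0,1])\to\AA_-([0,1]^2)$ and that finite unions are computable on negative information, and then compose. Note also that $M(A)$ is closed, being a finite union of products of closed sets.

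For the remaining two properties fix $a_0\in A$, which is possible since $A\neq\emptyset$. Then $(a_0,0)\in A\times[0,1]\In M(A)$, so $M(A)\neq\emptyset$. For connectedness, the step is to use the ``cross'' $X:=(\{a_0\}\times[0,1])\cup([0,1]\times\{a_0\})$: it lies in $M(A)$ (since $\{a_0\}\times[0,1]\In A\times[0,1]$ and $[0,1]\times\{a_0\}\In[0,1]\times A$) and it is connected, being a union of two arcs through the common point $(a_0,a_0)$. Now for an arbitrary $(x,y)\in M(A)$ we have $x\in A$ or $y\in A$ by definition of $M(A)$. If $x\in A$, then $\{x\}\times[0,1]\In A\times[0,1]\In M(A)$ is connected, contains $(x,y)$, and contains $(x,a_0)\in[0,1]\times\{a_0\}\In X$; hence $(\{x\}\times[0,1])\cup X$ is a connected subset of $M(A)$ that contains $(x,y)$ and meets $X$. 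The case $y\in A$ is symmetric via $[0,1]\times\{y\}$. Thus $M(A)$ is a union of connected subsets each meeting the connected set $X$, and is therefore connected.

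I do not expect a genuine obstacle here: the connectedness argument is a one-paragraph topology exercise, and the only part needing a little care is organising the effective enumeration of $[0,1]^2\setminus M(A)$ (the rational-box versus rational-ball bookkeeping and the restriction to $[0,1]^2$). The truly subtle feature of this construction --- that there is no uniform way to recover from a point $(x_1,x_2)\in M(A)$ which of $x_1,x_2$ is guaranteed to lie in $A$ --- is precisely what keeps $M$ from furnishing a uniform reduction of $\WKL$ to $\ConC_2$, and it is dealt with separately (and bypassed in dimension three in Proposition~\ref{prop:twisted-cube}); it plays no role in establishing the present statement.
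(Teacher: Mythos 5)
Your proof is correct. The paper actually states this proposition without any proof at all, treating both the computability of $M$ and the connectedness of $M(A)$ as routine observations (only the remark following the statement — that exactly one of the two coordinates of a point of $M(A)$ is guaranteed to lie in $A$, and that which one is not uniformly recoverable — is dwelt upon, since that is the obstacle the twisted cube of Proposition~\ref{prop:twisted-cube} circumvents). Your argument is the natural filling-in: the complement identity $M(A)^{\cc}=A^{\cc}\times A^{\cc}$ inside $[0,1]^2$ reduces computability to the effective enumeration of rational boxes by rational cubes, and the cross $X=(\{a_0\}\times[0,1])\cup([0,1]\times\{a_0\})$ through a point $a_0\in A$ gives a common connected set met by every horizontal or vertical slice making up $M(A)$, yielding connectedness. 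Your concluding observation about non-uniformity is accurate and well placed as context, and you are right that it plays no role in the proof of the statement itself.
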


It follows straightforwardly from the definition that the pairs $(x,y)\in M(A)$ are
such that one out of two components $x,y$ is actually in $A$. 
In order to express the uniform content of this fact, we introduce the concept of a fraction.

\begin{definition}[Fractions]
Let $f:\In X\mto Y$ be a multi-valued function and $0<n\leq m\in\IN$. We define the 
{\em fraction} $\frac{n}{m}f:\In X\mto Y^m$ by
\[\frac{n}{m}f(x):=\{(y_1,...,y_m)\in\range(f)^m:|\{i:y_i\in f(x)\}|\geq n\}\]
for all $x\in\dom(\frac{n}{m}f):=\dom(f)$.
\end{definition}

The idea of a fraction $\frac{n}{m}f$ is that it provides $m$ potential answers for $f$,
at least $n\leq m$ of which have to be correct. The uniform content of the Orevkov-Baigger construction is
then summarized in the following result.

\begin{proposition}[Connected choice in dimension two]
\label{prop:dimension-two}
$\frac{1}{2}\C_{[0,1]}\leqSW\ConC_2\leqSW\C_{[0,1]}$.
\end{proposition}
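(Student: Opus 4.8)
The plan is to prove the two reductions in Proposition~\ref{prop:dimension-two} separately. The right-hand reduction $\ConC_2\leqSW\C_{[0,1]}$ is immediate: connected choice of dimension two is by definition a restriction of closed choice $\C_{[0,1]^2}$, and by Fact~\ref{fact:WKL} we have $\C_{[0,1]^2}\equivSW\C_{[0,1]}$, so $\ConC_2\leqSW\C_{[0,1]^2}\equivSW\C_{[0,1]}$. The content of the proposition is therefore the left-hand reduction $\frac{1}{2}\C_{[0,1]}\leqSW\ConC_2$, and this is where the Orevkov--Baigger construction of Proposition~\ref{prop:mixed-cube} enters.

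For $\frac{1}{2}\C_{[0,1]}\leqSW\ConC_2$, I would take as input a non-empty closed set $A\In[0,1]$ given by negative information. First I would apply the computable map $M$ of Proposition~\ref{prop:mixed-cube} to obtain the non-empty connected closed set $M(A)=(A\times[0,1])\cup([0,1]\times A)\In[0,1]^2$. Then I would feed $M(A)$ to $\ConC_2$, obtaining some point $(x,y)\in M(A)$. The key observation is that by definition of $M(A)$, every point $(x,y)\in M(A)$ satisfies $x\in A$ or $y\in A$ (indeed $(x,y)\in A\times[0,1]$ gives $x\in A$, while $(x,y)\in[0,1]\times A$ gives $y\in A$). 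Hence the pair $(x,y)\in[0,1]^2$ is, literally, an element of $\frac{1}{2}\C_{[0,1]}(A)$: it provides two potential answers, at least one of which lies in $A$. So the output map $K$ is simply the identity (or rather the canonical identification of $[0,1]^2$ with $[0,1]^2$), and the input map $H$ is $M$; this gives $\frac{1}{2}\C_{[0,1]}=K\circ\ConC_2\circ M$ with $K$ and $M$ computable, which is precisely a strong Weihrauch reduction $\frac{1}{2}\C_{[0,1]}\leqSW\ConC_2$.

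The only point requiring a little care is that $\frac{n}{m}f$ as defined demands the answers lie in $\range(f)^m$; here $\range(\C_{[0,1]})=[0,1]$ and both $x,y\in[0,1]$, so this is automatic. I would also note that Proposition~\ref{prop:mixed-cube} itself needs a brief verification: $M$ is computable because $A\times[0,1]$, $[0,1]\times A$ and their union are computable operations on $\AA_-([0,1]^2)$ (a ball avoiding $M(A)$ is one whose projection avoids $A$ in the relevant coordinate), and $M(A)$ is connected because $A\times[0,1]$ and $[0,1]\times A$ are each connected (being products of an interval with a nonempty set is not quite right --- rather, each is path-connected since any two points can be joined by first moving within the $[0,1]$-factor and then within $A$... actually the cleanest argument is that for any $a\in A$ the set $(\{a\}\times[0,1])\cup([0,1]\times\{a\})$ is connected and is contained in $M(A)$, and $M(A)$ is the union of all these connected sets over $a\in A$, all sharing the point $(a,a)$ --- wait, they need not share a common point). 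The correct argument: each slice $A\times\{t\}$ meets each slice $\{s\}\times A$, so $A\times[0,1]$ and $[0,1]\times A$ have nonempty intersection (e.g.\ $(a,a')$ for $a,a'\in A$), and each is connected as a product of a connected set --- no, $A$ need not be connected. The genuinely correct and simple argument is that $M(A)=\bigcup_{a\in A}\big((\{a\}\times[0,1])\cup([0,1]\times\{a\})\big)$ where each term is a connected ``cross'' containing $(a,a)$, and any two crosses for $a,a'\in A$ both contain the segment behavior needed --- in fact the cross for $a$ contains $(a,a')$ and the cross for $a'$ contains $(a',a')$ and $(a,a')$, so consecutive crosses overlap; since all crosses are pinned together through such overlaps and $A\neq\emptyset$, the union is connected. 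I expect the main (though modest) obstacle is simply to state this connectedness verification of $M(A)$ cleanly; the reduction itself is essentially a tautological unfolding of the definition of $\frac{1}{2}\C_{[0,1]}$.
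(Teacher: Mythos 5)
Your proof is correct and follows essentially the same route as the paper: the right-hand reduction is the trivial restriction-of-choice observation combined with $\C_{[0,1]^2}\equivSW\C_{[0,1]}$, and the left-hand reduction is precisely the paper's identity $\frac{1}{2}\C_{[0,1]}=\ConC_2\circ M$ with $M$ from Proposition~\ref{prop:mixed-cube}. The only extra content in your write-up is the verification of Proposition~\ref{prop:mixed-cube} itself (connectedness of $M(A)$ via the pairwise-overlapping crosses $(\{a\}\times[0,1])\cup([0,1]\times\{a\})$), which the paper states without proof; after some false starts your final ``crosses'' argument is correct, since for any $a,a'\in A$ the point $(a,a')$ lies in both crosses.
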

\begin{proof}
With Proposition~\ref{prop:mixed-cube} we obtain $\frac{1}{2}\C_{[0,1]}=\ConC_2\circ M$
and hence $\frac{1}{2}\C_{[0,1]}\leqSW\ConC_2$. The other reduction follows
from $\ConC_2\leqSW\C_{[0,1]^2}\equivSW\C_{[0,1]}$.
\end{proof}

That is, given a closed set $A\In[0,1]$ we can utilize connected choice $\ConC_2$ of dimension $2$
in order to find a pair of points $(x,y)$ one of which is in $A$. 
This result directly implies the counterexample of Baigger~\cite{Bai85} because
the fact that there are non-empty co-c.e.\ closed sets $A\In[0,1]$ without
computable point immediately implies that $\frac{1}{2}\C_{[0,1]}$ is not non-uniformly
computable (i.e., there are computable inputs without computable outputs) and hence
$\ConC_2$ is also not non-uniformly computable.

\begin{corollary}[Orevkov 1963, Baigger 1985]
\label{cor:Orevkov-Baigger}
There exists a computable function $f:[0,1]^2\to[0,1]^2$ that has no computable fixed point $x\in[0,1]^2$.
There exists a non-empty connected co-c.e.\ closed subset $A\In[0,1]^2$ without computable point.   
\end{corollary}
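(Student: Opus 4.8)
The plan is to harvest Lemma~\ref{lem:CC-BFT} and Proposition~\ref{prop:mixed-cube} and feed into them the single non-constructive ingredient underlying the whole phenomenon: a non-empty co-c.e.\ closed subset of the interval with no computable point.

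First I would fix a non-empty co-c.e.\ closed set $A_0\In[0,1]$ containing no computable point. Such a set exists by a classical argument: take a non-empty $\PO1$ class $P\In\Cantor$ with no computable member (for instance the class of all separating sets of a pair of computably inseparable c.e.\ sets) and transport it to $[0,1]$ via the canonical computable embedding $\iota$ from the proof of Theorem~\ref{thm:Con}; since $\iota$ and its partial inverse are computable, $A_0:=\iota(P)$ is again co-c.e.\ closed and still has no computable point. Now apply the computable map $M$ of Proposition~\ref{prop:mixed-cube} to obtain $B:=M(A_0)=(A_0\times[0,1])\cup([0,1]\times A_0)$. By that proposition $B$ is non-empty, connected and closed; and $B$ is co-c.e.\ closed, since $A_0$ is a computable point of $\AA_-[0,1]$ and $M$ carries computable points to computable points. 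Finally $B$ has no computable point: every $(x,y)\in B$ satisfies $x\in A_0$ or $y\in A_0$, so a computable $(x,y)\in B$ would have a computable coordinate lying in $A_0$, contradicting the choice of $A_0$. This $B$ witnesses the second assertion of the corollary.

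For the first assertion I would run the construction from the proof of Lemma~\ref{lem:CC-BFT} on the input $A=B$. That proof builds, uniformly in a name of a non-empty connected closed set $A\In[0,1]^n$, a continuous function $f\in\CC_n$ with $\Fix_n(f)=\BFT_n(f)=A$ (the inclusion $\BFT_n(f)\In A$ is what is claimed there, and equality is observed in the comment following the lemma). Since $B$ is a computable point of $\AA_2$, instantiating this at $n=2$ yields a computable function $f:[0,1]^2\to[0,1]^2$ with $\Fix_2(f)=B$. As $B$ has no computable point, $f$ has no computable fixed point, which is precisely the first assertion.

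I do not expect a real obstacle at this stage: the substantial work is already encapsulated in Lemma~\ref{lem:CC-BFT}, which turns a connected closed set into a continuous self-map of the cube having exactly that set as its fixed point set, and the corollary is just the conjunction of this with Proposition~\ref{prop:mixed-cube}, applied to one explicit input. One could also argue more abstractly through Proposition~\ref{prop:dimension-two}, using $A_0$ to see that $\frac{1}{2}\C_{[0,1]}$ --- and hence $\ConC_2\equivSW\BFT_2$ --- is not non-uniformly computable; I would still prefer the direct route above, because it produces a concrete computable $f$ outright and sidesteps any discussion of how the property of having a computable input with no computable output travels along the reduction $\ConC_2\leqSW\BFT_2$.
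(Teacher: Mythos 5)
Your proposal is correct and follows essentially the paper's own route: the paper packages the same argument as ``$\frac{1}{2}\C_{[0,1]}$ has a computable input with no computable output, hence so do $\ConC_2$ and $\BFT_2$,'' which under the hood is exactly your chain $A_0 \mapsto M(A_0) \mapsto f$ via Proposition~\ref{prop:mixed-cube} and Lemma~\ref{lem:CC-BFT}. You have simply unwound the abstract statement ``not non-uniformly computable'' into the explicit witnesses, which is a stylistic rather than a substantive difference.
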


We mention that Proposition~\ref{prop:dimension-two} does not directly imply $\C_{[0,1]}\equivSW\ConC_2$,
since $\frac{1}{2}\C_{[0,1]}\lW\ConC_2$. In fact, we can prove an even stronger result which shows
that $\frac{1}{2}\C_{[0,1]}$ computes almost nothing, not even choice for the two point space.\footnote{Such problems 
have been called {\em indiscriminative} in \cite{BHK17a}.}
This means that Proposition~\ref{prop:dimension-two} has very little uniform content.

\begin{proposition}
\label{prop:half-choice}
$\C_{\{0,1\}}\nleqW\frac{1}{2}\C_{[0,1]}$.
\end{proposition}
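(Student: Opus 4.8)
The plan is to derive a contradiction from a hypothetical Weihrauch reduction; the geometric heart of the matter is that the \emph{solution sets} of $\frac{1}{2}\C_{[0,1]}$ overlap for \emph{any} two inputs. Indeed, for a non-empty closed $B\In[0,1]$ the set of admissible outputs of $\frac{1}{2}\C_{[0,1]}$ on $B$ is precisely the mixed cube $M(B)=(B\times[0,1])\cup([0,1]\times B)$ of Proposition~\ref{prop:mixed-cube}, and since $B\times B'\In M(B)\cap M(B')$ one gets $M(B)\cap M(B')\neq\emptyset$ whenever $B,B'$ are non-empty: a single output point is legal for both inputs simultaneously. If all we wanted were $\C_{\{0,1\}}\nleqSW\frac{1}{2}\C_{[0,1]}$ this already suffices: take names of the two closed sets $\{0\}$ and $\{1\}$ as instances of $\C_{\{0,1\}}$; a strong reduction $K,H$ would force $K$ to send every name of a point of $M(H(\{0\}))$ to a name of $0$ and every name of a point of $M(H(\{1\}))$ to a name of $1$, and applying $K$ to a name of some $z\in M(H(\{0\}))\cap M(H(\{1\}))$ yields the contradiction.

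For the ordinary reduction the output modificator $K$ additionally sees the $\C_{\{0,1\}}$--instance, so I must also keep $K$ from distinguishing instances whose names agree on a long prefix. Assume $K,H$ witness $\C_{\{0,1\}}\leqW\frac{1}{2}\C_{[0,1]}$. Since Weihrauch reducibility is invariant under replacing a representation by an admissibly equivalent one, I would take $[0,1]$ to be represented by enumerations of rational open intervals containing the point; then a partial name may begin with arbitrarily many \emph{trivial} intervals, each containing all of $[0,1]$, and such a prefix is consistent with the point being any element of $[0,1]$. Fix a name $q_{\top}$ of the instance $\{0,1\}$ of $\C_{\{0,1\}}$ (for which both $0$ and $1$ are correct) that can be continued, arbitrarily late, into a name of $\{0\}$ and into a name of $\{1\}$. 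I would run $K$ on input $q_\top$ while feeding the oracle channel trivial intervals; if and when $K$ emits its first output symbol $b$ it has read a finite prefix $q_\top|_m$ and some finite number $\ell$ of trivial oracle intervals. Now pick an instance $p$ of $\C_{\{0,1\}}$ agreeing with $q_\top$ up to position $m$ but naming the singleton $\{1-b\}$, and let $G(H(p))$ be the $\ell$ trivial intervals followed by a name of $(c,c)$ for an arbitrary $c\in H(p)$ ($G$ being defined arbitrarily but legally elsewhere); this is an admissible $\frac{1}{2}\C_{[0,1]}$--answer for the non-empty set $H(p)$ by the remark on trivial prefixes. Then $K\langle p,G(H(p))\rangle$ reads exactly $q_\top|_m$ and those $\ell$ trivial intervals and again outputs $b$, contradicting correctness since the only correct answer for $p$ is $1-b$.

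The one situation still to be excluded is that $K$, fed trivial intervals on the instance $q_\top$, never produces a first output symbol. One first rules out that $K$ eventually stops reading the oracle channel: if it did, completing the finite trivial oracle prefix to a genuine name of a point of $H(\{0,1\})$ would give a valid input on which $K$ does not halt. Hence $K$ keeps asking for ever finer oracle information, and this remaining case would be handled by combining the overlap observation with the delay-able representation, arranging that at the instant $K$ first speaks it has committed to at most one of the two output coordinates, so that the other coordinate can still be redirected into $H(p)$ via an arbitrary point of that non-empty set. I expect precisely this last case --- controlling which of the two coordinates $K$ has pinned down at the moment it produces its first symbol --- to be the genuine technical difficulty of the proof.
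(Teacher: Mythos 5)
Your observation that the $\frac{1}{2}\C_{[0,1]}$-solution sets overlap pairwise --- $B\times B'\In M(B)\cap M(B')\neq\emptyset$ for non-empty $B,B'$ --- is the right geometric starting point, and the strong-reduction argument built on it (a single name of a common point $z$ fed to $K$ must yield both $0$ and $1$) is essentially correct. But the statement you are asked to prove is $\nleqW$, and there your ``delay the commitment'' strategy has a genuine gap which you yourself flag at the end. Running $K$ on the instance $q_\top$ with the oracle tape filled by trivial intervals is not guaranteed to produce a first output symbol: ``trivial intervals forever'' is not a valid name, so $K$ may legitimately keep reading the oracle tape indefinitely without ever emitting anything, and that is not in itself a contradiction. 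In that remaining case the first output symbol of $K$ is only determined \emph{after} the oracle name has begun to commit to a concrete pair, and there is then no evident way to redirect the commitment into the solution set of the modified instance $p$ while keeping the prefix $K$ has already consumed. Your closing sketch about ``controlling which of the two coordinates $K$ has pinned down'' is exactly where the real work would have to happen, and it is not carried out.

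The paper's proof avoids this delicate control of $K$'s reading behaviour by a compactness-plus-continuity argument that picks concrete points rather than postponing the choice. Working with $\LLPO\equivSW\C_{\{0,1\}}$, it takes the instances $p_{j,i}=0^{2i+j+1}10^\IN$, which converge to $p_\infty=0^\IN$, picks points $x_{j,i}$ in the corresponding oracle inputs $K_{j,i}:=\psi_-(\text{input modificator}(p_{j,i}))$, and uses compactness of $[0,1]$ to extract limits $x_0,x_1$. Continuity of the input modificator forces both $x_0$ and $x_1$ into $K_\infty$, so the pair of their names is a valid oracle answer for $p_\infty$; meanwhile each approximating pair is a valid oracle answer for $p_{1,i_k}$ (second coordinate correct). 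Continuity of the output modificator then pins its value at $p_\infty$ to be simultaneously $0$ and $1$, a contradiction. This sidesteps entirely the question of how far the machine reads into the oracle tape, which is precisely the obstacle your approach does not overcome.
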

\begin{proof}
We use $\C_{\{0,1\}}\equivSW\LLPO$ and 
by $\psi_-$ we denote the representation of $\AA_1$. 
We recall that $\LLPO:\In\IN^\IN\mto\IN$ is defined such that for $j\in\{0,1\}$ and $p\in\{0,1\}^\IN$ it holds that
\[j\in\LLPO(p)\iff(\forall i)\;p(2i+j)=0,\]
where $\dom(\LLPO)$ contains all sequences $p$ such that $p(k)\not=0$ for at most one $k$.

Let us now assume that $\LLPO\leqW\frac{1}{2}\C_{[0,1]}$ holds.
Then there are continuous $H,K$ such that $H\langle\id,FK\rangle$ realizes $\LLPO$
whenever $F$ realizes $\frac{1}{2}\C_{[0,1]}$.
We consider the inputs $p_{ji}:=0^{2i+j+1}10^\IN$ and $p_\infty:=0^\IN$ for $\LLPO$.
We obtain $\LLPO(p_{ji})=\{j\}$ for $j\in\{0,1\}$ and $\LLPO(p_\infty)=\{0,1\}$.
Now we let $K_{ji}:=\psi_-(K(p_{ji}))$ and $K_\infty:=\psi_-(K(p_\infty))$. These sets
are all non-empty compact subsets of $[0,1]$, hence there are $x_{ji}\in K_{ji}$ with names $q_{ji}$ 
(with respect to the signed-digit representation of $[0,1]$).
Due to compactness for each $j\in\{0,1\}$ there is some convergent subsequence $(q_{ji_k})$ of $q_{ji}$
and we let $q_j:=\lim_{k\to\infty}q_{ji_k}$ and $x_j:=\lim_{k\to\infty}x_{ji_k}$.

Now we claim that $x_j\in K_\infty$ for both $j\in\{0,1\}$ and by symmetry it suffices to prove
this for $j=0$. Let us assume that $x_0\not\in K_\infty$. Then by continuity of $K$ there
exists some open neighborhood $U$ of $x_0$ and some $k\in\IN$ such that
$U\cap\psi_-(K(r))=\emptyset$ for all $r\in\dom(\psi_-K)$ with $0^k\prefix r$.
Almost all $p_{0i}$ satisfy this condition, which implies $U\cap K_{0i}=\emptyset$ for almost all $i$.
This contradicts the construction of $x_0$. Hence $x_0\in K_\infty$ follows and analogously $x_1\in K_\infty$.

Hence there is some realizer $F_\infty$ of $\frac{1}{2}\C_{[0,1]}$ with $F_\infty K(p_\infty)=\langle q_0,q_1\rangle$.
Without loss of generality we can assume $H\langle p_\infty,\langle q_0,q_1\rangle\rangle=0$.
There are also realizers $F_k$ of $\frac{1}{2}\C_{[0,1]}$ with 
$F_kK(p_{1i_k})=\langle q_{0i_k},q_{1i_k}\rangle$, since the second component contains a correct answer.
Hence $H\langle p_{1i_k},\langle q_{0i_k},q_{1i_k}\rangle\rangle=1$ has to hold.
Continuity of $H$ now implies $H\langle p_\infty,\langle q_0,q_1\rangle\rangle=1$,
which is a contradiction.
\end{proof}

In the following result we summarize the known relations for connected choice in dependency of the dimension.

\begin{proposition}
\label{prop:connected-dimension}
We obtain
$\ConC_0\lW\ConC_1\lW\ConC_2\leqW\ConC_n\equivW\C_{[0,1]}$ for all $n\geq3$.
\end{proposition}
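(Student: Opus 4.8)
The plan is to read off the whole chain from facts that are already in place, with only two genuine separations to supply. First I would record the trivial monotonicity lemma: for $m\leq n$ one has $\ConC_m\leqSW\ConC_n$, via the computable map $A\mapsto A\times\{0\}^{n-m}$ that sends a non-empty connected closed $A\In[0,1]^m$ to the non-empty connected closed set $A\times\{0\}^{n-m}\In[0,1]^n$ (this is clearly computable with respect to negative information on both sides), followed by the projection onto the first $m$ coordinates. This gives $\ConC_0\leqW\ConC_1\leqW\ConC_2\leqW\ConC_n$ for every $n\geq 3$, and the top equivalence $\ConC_n\equivW\C_{[0,1]}$ for $n\geq 3$ is exactly Theorem~\ref{thm:dimension-three}. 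So it remains only to establish the two strict reductions.

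For $\ConC_0\lW\ConC_1$ it suffices to observe that $\ConC_0$ is computable, since $[0,1]^0$ is a computable one-point space, whereas $\ConC_1\equivW\IVT$ is not computable (this is the paradigmatic non-computable theorem; see \cite{BG11a}, where in particular $\LLPO\leqW\IVT$ is shown). Hence $\ConC_1\nleqW\ConC_0$, and together with the embedding $\ConC_0\leqW\ConC_1$ this yields $\ConC_0\lW\ConC_1$.

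For $\ConC_1\lW\ConC_2$ I would argue via the invariant of \emph{non-uniform computability}: call a multi-valued $f$ non-uniformly computable if every computable point of $\dom(f)$ has a computable image. The key point to carry out carefully is that this property is preserved downward under $\leqW$. Indeed, if $f\leqW g$ via computable $K,H$ and $p$ is a computable name of some $x\in\dom(f)$, then $H(p)$ is a computable name of some point $z\in\dom(g)$ (the reduction forces $\delta_Z H(p)\in\dom(g)$, since otherwise some realizer of $g$ would be undefined at $H(p)$); if $g$ is non-uniformly computable, $z$ has a computable image $y\in g(z)$ with a computable name $r$, and choosing a realizer $G$ of $g$ that agrees with $r$ on the single argument $H(p)$, the output $K\langle p,GH(p)\rangle=K\langle p,r\rangle$ is a computable name of a point of $f(x)$. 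Now $\ConC_1\equivW\IVT$ is non-uniformly computable, since $\IVT$ always produces a computable value on a computable instance (recalled in Section~\ref{sec:BFT}), while $\ConC_2$ is \emph{not} non-uniformly computable by Corollary~\ref{cor:Orevkov-Baigger}, which exhibits a non-empty connected co-c.e.\ closed $A\In[0,1]^2$ with no computable point. Therefore $\ConC_2\nleqW\ConC_1$, and with the embedding $\ConC_1\leqW\ConC_2$ this gives $\ConC_1\lW\ConC_2$.

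The proposition then follows by chaining these: $\ConC_0\lW\ConC_1\lW\ConC_2\leqW\ConC_n\equivW\C_{[0,1]}$ for all $n\geq 3$. The argument is essentially an assembly of prior results; the only step requiring care is the downward closure of non-uniform computability under Weihrauch reducibility, and I do not expect any serious obstacle there, since it is just the kind of realizer bookkeeping sketched above. The substantive inputs—Theorem~\ref{thm:dimension-three} and the Orevkov--Baigger non-computable connected set in the plane—are already available.
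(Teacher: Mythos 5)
Your proof is correct and follows essentially the same route as the paper: monotonicity via a computable embedding of $[0,1]^m$-sets into $[0,1]^n$-sets (the paper uses $A\mapsto A\times[0,1]$ rather than $A\mapsto A\times\{0\}^{n-m}$, but both work), $\ConC_0\lW\ConC_1$ by computability of $\ConC_0$ versus non-computability of $\ConC_1\equivW\IVT$, and $\ConC_1\lW\ConC_2$ via the downward-closed invariant of non-uniform computability together with Corollary~\ref{cor:Orevkov-Baigger}. The only cosmetic difference is that you spell out the realizer bookkeeping for why non-uniform computability is preserved under $\leqW$ (which the paper leaves implicit) and cite the $\IVT$ fact rather than the paper's direct observation that a connected co-c.e.\ closed subset of $[0,1]$ is either a computable singleton or contains a rational point.
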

\begin{proof}
It is clear that
$\ConC_n\leqSW\ConC_{n+1}$ holds for all $n\in\IN$, since the computable map $A\mapsto A\times[0,1]$
maps connected closed sets of dimension $n$ to such sets of dimension $n+1$.
The reduction $\ConC_0\lW\ConC_1$ is strict, since $\ConC_0$ is computable and $\ConC_1$ is not.
The reduction $\ConC_1\lW\ConC_2$ is strict, since $\ConC_1$ is non-uniformly computable (since any non-empty
connected co-c.e.\ closed set $A\In[0,1]$ is either a singleton and hence computable or it has non-empty interior and contains even a rational point)
and $\ConC_2$ is not non-uniformly computable by Corollary~\ref{cor:Orevkov-Baigger}.
\end{proof}

In Section~\ref{sec:dim2} we are going to prove that also $\ConC_2\equivSW\C_{[0,1]}$ holds. 

We close this section with a second proof of Theorem~\ref{thm:dimension-three} that uses a combinatorial argument 
as a replacement for the geometric construction provided in Proposition~\ref{prop:twisted-cube}.
It also indicates special properties of dimension two, which are not shared by higher dimensions.
Firstly, one can extend Proposition~\ref{prop:mixed-cube} straightforwardly to higher dimensions
(by choosing $A\mapsto(A\times A\times[0,1])\cup(A\times [0,1]\times A)\cup([0,1]\times A\times A)$
in dimension three and so forth) and that leads to the following generalization of Proposition~\ref{prop:dimension-two}.

\begin{proposition} $\frac{n-1}{n}\C_{[0,1]}\leqSW\ConC_n\leqSW\C_{[0,1]}$ for all $n\geq2$.
\end{proposition}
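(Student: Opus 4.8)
The plan is to generalise the mixed-cube construction of Proposition~\ref{prop:mixed-cube}. For $n\geq 2$ and a closed set $A\In[0,1]$ put
\[
M_n(A):=\bigcup_{i=1}^{n}\bigl(A^{i-1}\times[0,1]\times A^{n-i}\bigr)\In[0,1]^n,
\]
the union of the $n$ ``slabs'' $S_i$ in which the $i$-th coordinate ranges over $[0,1]$ and every other coordinate is forced into $A$. The right-hand reduction $\ConC_n\leqSW\C_{[0,1]}$ needs nothing new: $\ConC_n$ is a restriction of $\C_{[0,1]^n}$ and $\C_{[0,1]^n}\equivSW\C_{[0,1]}$ by Fact~\ref{fact:WKL}, exactly as in Proposition~\ref{prop:dimension-two}. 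The content is the left-hand reduction, which I would obtain by showing $\frac{n-1}{n}\C_{[0,1]}=\ConC_n\circ M_n$; this splits into: (i) $M_n(A)$ is a non-empty connected closed subset of $[0,1]^n$ whenever $A\neq\emptyset$, so that $M_n(A)\in\dom(\ConC_n)$ and $\ConC_n(M_n(A))=M_n(A)$; (ii) $M_n(A)$ equals the set of legal outputs of $\frac{n-1}{n}\C_{[0,1]}$ on input $A$; and (iii) $M_n$ is computable from $\AA_-[0,1]$ to $\AA_n$.

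Items (ii) and (iii) are routine. For (ii): $x=(x_1,\dots,x_n)\in S_i$ means exactly that $x_j\in A$ for all $j\neq i$, i.e.\ at least $n-1$ coordinates of $x$ lie in $A$; conversely any $x$ with $\geq n-1$ coordinates in $A$ lies in $S_i$ for the one exceptional index $i$ (or in every $S_i$ if all coordinates are in $A$). For (iii): a negative name of $A$ presents $[0,1]\setminus A$ as a union of rational balls, hence presents $[0,1]^n\setminus S_i=\bigcup_{j\neq i}\bigl([0,1]^{j-1}\times([0,1]\setminus A)\times[0,1]^{n-j}\bigr)$ as a union of rational cubes, so each $S_i$ is computable in $\AA_-([0,1]^n)$, and finite unions of closed sets are computable on $\AA_-([0,1]^n)$.

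The real work is (i), and the only genuinely delicate part is connectedness, because the individual slabs $S_i\cong A^{n-1}\times[0,1]$ need not be connected when $A$ is disconnected, so the proof has to exploit the overlaps between slabs. I would fix some $a_0\in A$ (its existence, not its computation, is all that is needed) and show that every $x\in M_n(A)$ is joined to the point $(a_0,\dots,a_0)\in M_n(A)$ by a path inside $M_n(A)$, whence $M_n(A)$ is connected (in fact path-connected). If $x\in S_i$, the straight segment that drives the $i$-th coordinate of $x$ to $a_0$ stays in $S_i$, and its endpoint $y$ has all coordinates in $A$. Starting from $y$ I would then replace the coordinates by $a_0$ one at a time: while the $k$-th coordinate travels along $[0,1]$ from $y_k$ to $a_0$, every other coordinate is held fixed in $A$, so the moving point stays in $S_k\In M_n(A)$ throughout. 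Concatenating the first segment with these $n$ segments yields a path in $M_n(A)$ from $x$ to $(a_0,\dots,a_0)$; together with the facts that each $S_i$ is closed and $(a_0,\dots,a_0)\in M_n(A)$, this establishes (i). Combining (i)--(iii) gives $\frac{n-1}{n}\C_{[0,1]}=\ConC_n\circ M_n$, and since $M_n$ is computable and the output is returned unchanged, $\frac{n-1}{n}\C_{[0,1]}\leqSW\ConC_n$.
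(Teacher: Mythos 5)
Your proposal is correct and is exactly the construction the paper has in mind: the paper gives only a one-line remark that the mixed-cube construction generalizes via $A\mapsto(A\times A\times[0,1])\cup(A\times[0,1]\times A)\cup([0,1]\times A\times A)$ ``in dimension three and so forth'', and your $M_n$ is precisely that family with the details (connectedness via path-connection through $(a_0,\dots,a_0)$, computability of finite unions of closed sets in $\AA_-$) spelled out. Nothing in your argument deviates from the intended proof.
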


On the other hand, one can use a majority voting strategy to obtain the following result.

\begin{proposition}[Majority vote] 
\label{prop:majority}
$\frac{k}{n}\C_{[0,1]} \equivSW\C_{[0,1]}$ if $2k>n\geq k>0$.
\end{proposition}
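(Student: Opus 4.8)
The plan is to reduce $\C_{[0,1]}$ to $\frac{k}{n}\C_{[0,1]}$ under the hypothesis $2k > n \geq k > 0$, the nontrivial direction; the reverse reduction is immediate since $\frac{k}{n}\C_{[0,1]}$ is defined entirely in terms of $\C_{[0,1]}$ (feed the same closed set $A$ to $k$ copies, or more simply to all $n$ coordinates, of $\C_{[0,1]}$). For the interesting direction, suppose we are given a non-empty closed $A \In [0,1]$ by negative information and we invoke $\frac{k}{n}\C_{[0,1]}$ on it, obtaining a tuple $(y_1,\dots,y_n) \in [0,1]^n$ with the guarantee that at least $k$ of the $y_i$ lie in $A$. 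The key observation is that since $2k > n$, \emph{any} two of the ``good'' index sets of size $\geq k$ must intersect: if $I, J \In \{1,\dots,n\}$ with $|I|, |J| \geq k$, then $|I \cap J| \geq 2k - n \geq 1$. So there is a robust majority phenomenon available, but the subtlety is that we do not \emph{know} which coordinates are good, only that a large-enough set of them is.

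The main step is to extract a single point of $A$ from the tuple $(y_1,\dots,y_n)$ using only negative information about $A$. First I would sort the values, relabelling so that $y_1 \leq y_2 \leq \dots \leq y_n$ (a computable operation on signed-digit names up to the usual care near ties, which can be handled by working with the closed set of sorted tuples), and then output the median-type coordinate $y_m$ where $m := n - k + 1$. I claim $y_m \in A$. Indeed, let $G \In \{1,\dots,n\}$ be any set of good indices with $|G| \geq k$. Among the sorted values, the indices $\{m, m+1, \dots, n\}$ form a block of size $k$, and the indices $\{1,\dots,m\} = \{1,\dots,n-k+1\}$ form a block of size $n-k+1$; since $|G| \geq k > n - k$ (the last inequality is exactly $2k > n$), $G$ cannot be contained in $\{1,\dots,m-1\}$, so $G$ meets $\{m,\dots,n\}$, giving some good index $i \geq m$, hence $y_m \leq y_i$ with $y_i \in A$. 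Symmetrically, $G$ cannot be contained in $\{m+1,\dots,n\}$ (size $k-1 < k$), so $G$ meets $\{1,\dots,m\}$, giving a good index $i' \leq m$ with $y_{i'} \in A$ and $y_{i'} \leq y_m$. Thus $y_m$ lies between two points $y_{i'}, y_i$ of $A$; this alone does not put $y_m$ in $A$. To actually land inside $A$ I would instead use connectedness structure: pass $A$ through a tree of rational complexes and, at each level $\ell$, the finitely many complexes partition $[0,1]$ into intervals, and the good tuple forces the selected sorted coordinate into a specific complex that must meet $A$ — more carefully, I would identify, for the relabelled tuple, the complex at level $\ell$ containing $y_m$, observe it is squeezed correctly because a good index on each side pins it down, and diagonalize across levels to converge to a point of $A$. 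Concretely: since for every candidate ``good set'' $G$ the coordinate $y_m$ is sandwiched by members of $A$ arbitrarily well as $\ell \to \infty$, the limit point is in the closed set $A$.

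The place where care is genuinely needed — and what I expect to be the main obstacle — is making the sorting and the ``which complex contains $y_m$'' determination \emph{continuous} (equivalently, computable with respect to the representations), since sorting and interval-membership are discontinuous at ties and at endpoints. The clean fix is to avoid pointwise sorting of names and instead work with the closed set $S_{k,n}(A) := \{(y_1,\dots,y_n) : \text{at least } k \text{ coordinates lie in } A\}$, note it is a co-c.e.\ closed subset of $[0,1]^n$ computable from the negative description of $A$, observe it is \emph{connected} when $A$ is connected (and in general one first reduces to the connected case, or one notes $A$ need not be connected here so one works directly with a tree-of-complexes presentation of $S_{k,n}(A)$), and then read off a point via the combinatorial sandwich argument above applied levelwise to the rational complexes approximating $S_{k,n}(A)$ from outside. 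This sidesteps discontinuity entirely: at each finite stage one has only finitely many rational boxes, and the $2k > n$ pigeonhole forces at least one box on the ``low side'' and one on the ``high side'' of the chosen coordinate to be un-eliminated, which over all stages produces a nested sequence of rational intervals shrinking to a genuine element of $A$. Packaging these choices uniformly gives the computable maps $H, K$ witnessing $\C_{[0,1]} \leqSW \frac{k}{n}\C_{[0,1]}$, and combined with the trivial reverse reduction we conclude $\frac{k}{n}\C_{[0,1]} \equivSW \C_{[0,1]}$.
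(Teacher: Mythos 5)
Your easy direction is fine: one call to $\C_{[0,1]}$ on $A$ followed by replicating the answer $n$ times gives a correct output of $\frac{k}{n}\C_{[0,1]}$, so $\frac{k}{n}\C_{[0,1]}\leqSW\C_{[0,1]}$. (The phrase ``feed $A$ to $k$ copies of $\C_{[0,1]}$'' suggests a product $\C_{[0,1]}^k$, which you do not need and which would not a priori reduce to a single $\C_{[0,1]}$; but the intent is clear and the reduction is trivial.)

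The hard direction has a genuine gap, and it is exactly where you flag it yourself. Your plan is to query $\frac{k}{n}\C_{[0,1]}$ on $A$ itself and then recover a point of $A$ from the returned tuple $(y_1,\dots,y_n)$ together with the negative information on $A$. The sandwich combinatorics ($m=n-k+1$, so some good index is $\geq m$ and some is $\leq m$) is correct and only tells you that $y_m$ lies between two points of $A$. For connected $A$ this would indeed put $y_m\in A$, but $\C_{[0,1]}$ is applied to arbitrary non-empty closed $A$, and for disconnected $A$ the good coordinates can straddle a gap of $A$ while $y_m$ sits inside that gap. Your proposed repair -- replaying the sandwich argument level by level in a tree of rational complexes for $A$ (or for $S_{k,n}(A)$) -- is not carried out, and I do not see how to make it work: at a finite level you cannot tell which coordinates are good, the several components of $A$ present at that level may each be hit by some $y_i$'s, and there is no continuous rule for committing to a component that is guaranteed not to be abandoned later. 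Concretely, the entire strategy of extracting a point of $A$ from a tuple in $\frac{k}{n}\C_{[0,1]}(A)$ is what the paper avoids, precisely because it does not obviously admit a continuous solution. The paper instead exploits $\C_{[0,1]}\equivSW\widehat{\LLPO}$: it shows $\widehat{\LLPO}\leqSW\frac{k}{n}\widehat{\LLPO}$ by \emph{bitwise} majority voting, which is sound because at each bit position a correct answer to $\widehat{\LLPO}$ is either forced (and then all $\geq k$ good coordinates agree and win the vote since $2k>n$) or free (and then any vote is fine); and then it transports $\frac{k}{n}\widehat{\LLPO}\leqSW\frac{k}{n}\C_{[0,1]}$ by post-composing the known reduction $\widehat{\LLPO}\leqSW\C_{[0,1]}$ coordinatewise with a total $H$. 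So the set actually handed to $\frac{k}{n}\C_{[0,1]}$ is not $A$ but an encoded tree, and the majority vote happens in $\{0,1\}^\IN$, where it is continuous, rather than in $[0,1]$, where it is not. That change of venue is the missing idea in your proposal.
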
 
\begin{proof} 
It is clear that $\frac{k}{n}\C_{[0,1]} \leqSW\C_{[0,1]}\equivSW\widehat{\LLPO}$ holds.
Hence, we only need to prove $\widehat{\LLPO}\leqSW\frac{k}{n}\C_{[0,1]}$.
In the first step we show $\widehat{\LLPO} \leqSW \frac{k}{n}\widehat{\LLPO}$. 
Given some answer $(p_1, \ldots, p_{n}) \in \frac{k}{n}\widehat{\LLPO}(q)$, 
a solution $p \in \widehat{\LLPO}(q)$ can be obtained by bitwise majority voting: 
for any $i \in\IN$, we let $p(i) := 1$ if and only if $|\{j : p_j(i) = 1\}| \geq k$ 
and $p(i) := 0$ otherwise. This guarantees majority since $2k>n$.
To complete the proof it suffices to show $\frac{k}{n}\widehat{\LLPO}\leqSW\frac{k}{n}\C_{[0,1]}$.
We know that $\widehat{\LLPO}\leqSW\C_{[0,1]}$ and hence there are computable $H,K$ such that
$HFK\vdash\widehat{\LLPO}$ whenever $F\vdash\C_{[0,1]}$ holds. Without loss of generality,
we can assume that we use a total representation for $[0,1]$ and hence $H$ has to be total since $\C_{[0,1]}$ is surjective.
This implies that $H^{n}FK\vdash\frac{k}{n}\widehat{\LLPO}$ whenever $F\vdash\frac{k}{n}\C_{[0,1]}$,
which completes the proof.
\end{proof} 

We note that $\frac{n-1}{n}$ satisfies $2(n-1)>n$ if and only if $n\geq3$.
This does constitute a second proof of Theorem~\ref{thm:dimension-three}.
Moreover, Proposition~\ref{prop:half-choice} shows that the claim of Proposition~\ref{prop:majority} does not hold for $n=2$ and $k=1$.
This illustrates from a combinatorial perspective why dimension two is special.

\section{The Two-Dimensional Case}
\label{sec:dim2}

The goal of this section is to prove that connected choice $\ConC_n$ is equivalent to $\C_{[0,1]}$ 
in the two-dimensional case $n=2$. The construction required for the proof of $\C_{[0,1]}\leqSW\ConC_2$ 
is much more involved than in the three-dimensional
case and it is essentially based on an inverse limit construction. 

\begin{theorem}[Two-dimensional case]
\label{thm:dim2}
$\ConC_2\equivSW\BFT_2\equivSW\C_{[0,1]}$.
\end{theorem}
\begin{proof}
By Theorem~\ref{thm:BFT} and Fact~\ref{fact:WKL} it is sufficient to show $\widehat{\LLPO}\leqSW\ConC_2$.
In order to make the proof more understandable, we structure it into several parts.\\

\noindent
{\bf Preparation of the input.}
In order to organize the input information, we replace $\widehat{\LLPO}$ by an equivalent problem $\LLPO_\infty$
that we now define.
In the following we denote pairs $(n,b)\in\IN\times\{0,1\}$ for simplicity by $n_b$.
We say that a word $w\in(\IN\times\{0,1\})^*$ is {\em repetition-free}, if 
no number appears twice in the first component, i.e., if $w={n_0}_{b_0}{n_1}_{b_1}...{n_k}_{b_k}$, then 
$n_i\not=n_j$ for all $i,j\leq k$ with $i\not=j$. We introduce the following sets of repetition-free words.
 
\begin{localdef}[Repetition-free words]
For all $n\in\IN$ we define the sets
\begin{enumerate}
\item $W_n:=\{w\in(\{0,...,n-1\}\times\{0,1\})^*:w$ repetition-free$\}$,
\item $W_*:=\bigcup_{n\in\IN}W_n$,
\item $W_\IN:=\{p\in(\IN\times\{0,1\})^\IN:(\forall k)\;p|_k\in W_*\}$.
\item $W_\infty:=W_*\cup W_\IN$.
\end{enumerate}
\end{localdef}

For instance, $W_2=W_1\cup\{1_0,1_1,0_01_0,0_01_1,0_11_0,0_11_1,1_00_0,1_00_1,1_10_0,1_10_1\}$,
where $W_1=W_0\cup\{0_0,0_1\}$ and $W_0=\{\varepsilon\}$. 

We note that we use a representation $\delta_{W_\infty}$ to represent $\W_\infty$ that enumerates
the content of words. More precisely, we consider $p=\langle n_0,b_0\rangle\langle n_1,b_1\rangle\langle n_2,b_2\rangle...$
with $n_i\in\IN, b_i\in\{0,1\}$ as a name of a sequence ${n_0}_{b_0}{n_1}_{b_1}{n_2}_{b_2}...$
in which we remove all the ${n_i}_{b_i}$ with $n_i=0$ or with an $n_i$ that occurred already earlier in the sequence
and then we replace all the remaining ${n_i}_{b_i}$ by ${(n_i-1)}_{b_i}$. The resulting object is a finite
or infinite sequence $q\in W_\infty$ and we set $\delta_{W_\infty}(p)=q$.

Now we can define the problem $\LLPO_\infty$.

\begin{localdef}
$\LLPO_\infty:W_\infty\mto\{0,1\}^\IN$ is defined by
\[\LLPO_\infty(p):=\{q\in\{0,1\}^\IN:(\forall n,b)(q(n)=b\TO n_b\not\in\range(p))\}\]
for all $p\in W_\infty$.
\end{localdef}

\begin{localclaim}
\label{claim:dim2-1}
$\widehat{\LLPO}\equivSW\LLPO_\infty$.
\end{localclaim}
\begin{proof}
``$\widehat{\LLPO}\leqSW\LLPO_\infty$'': Given an input $p=\langle p_0,p_1,...\rangle$ for $\widehat{\LLPO}$ we generate
a repetition-free sequence $q\in W_\infty$ as follows. As soon as we learn from $p_n$ that $b\not\in\LLPO(p_n)$, then we write $n_b$ into the output.
Hence, $n_b$ occurs in the output sequence if and only if the $n$--th copy of $\LLPO$ does not allow the result $b$.
Hence, the resulting sequence $q\in W_\infty$ satisfies $\LLPO_\infty(q)=\widehat{\LLPO}(p)$. 
``$\LLPO_\infty\leqSW\widehat{\LLPO}$'': Vice versa, given a sequence $q\in W_\infty$, we can generate
a suitable input $p=\langle p_0,p_1,...\rangle$ to $\LLPO$ as follows. We start all $p_i$ with zeros and as soon
as we read some $n_b$ in $q$ we modify $p_n$ so that it contains a $1$ in some still available position, i.e., we set $p_n(2k+b)=1$ for large enough $k$. All other positions of $p_n$ will be filled with $0$.
In this way we obtain a sequence $p$ with $\LLPO_\infty(q)=\widehat{\LLPO}(p)$.  
\end{proof}

Now our goal is now to prove $\LLPO_\infty\leqSW\ConC_2$.
In fact, for convenience we replace $[0,1]^2$ by $B_0:=[0,1]\times [0,3]$ and we show
$\LLPO_\infty\leqSW\ConC_{B_0}$, where $\ConC_{B_0}$ is connected choice for the space $B_0$.
It is clear that $\ConC_{B_0}\equivSW\ConC_2$.
\\

\noindent
{\bf Overview of the proof.}
Given a repetition-free sequence $p\in W_\infty$, i.e., an input to $\LLPO_\infty$, we will compute a connected non-empty set
\[A(p):=\{x\in B_0:(\forall n\in\IN)\; f_{n-1}^{-1}\circ...\circ f_0^{-1}(x)\in E_n(s_n(p))\}\In\IR^2\]
that is defined by an inverse limit construction. That means that the functions $f_n:B_{n+1}\into B_n$ are computable
embeddings of certain rectangles $B_n\In\IR^2$ (called {\em blocks}) into each other and $E_n(s_n(p))\In B_n$ are certain subsets that consists
of a union of finitely many squares (called {\em tiles}) within $B_n$. These sets $E_n$ will be constructed such that
they reflect the information encoded in a certain portion $s_n(p)\in W_{n+1}$ of $p$ and this encoding will be organized such that
any point $y\in A(p)$ will allow us to compute some possible value of $\LLPO_\infty(p)$. 
We first describe the construction of the discrete structure of these blocks $B_n$ and certain subsets $S_n\In B_n$ (called {\em snakes})
that are completely independent of the input $p$. In a second step we describe how the sets $E_n$ are constructed as subsets of the snakes $S_n$ 
in dependence of the input $p$. Then we define the computable embeddings $f_n$ such that they preserve the information encoded in
the sets $E_n$ in a particular way. In the next step we show that the sets $A(p)$ can be computed from $p$ and that they allow us
to recover the information $\LLPO_\infty(p)$ from any $y\in A(p)$. Finally, we show that the sets $A(p)$ are non-empty and connected. \\

\begin{figure}[tb]
\begin{scriptsize}
\hspace*{-1cm}\begin{tikzpicture}[xscale=0.6,yscale=0.6]
\useasboundingbox  rectangle (13.6653,12.9987);


\draw[fill=red!20, draw=red!50!black]  (5,3) rectangle (6,2);
\draw[fill=black!20]  (5,4) rectangle (6,3);
\draw[fill=green!20, draw=green!50!black]  (5,5) node (v23) {} rectangle (6,4);

\node at (5.5,4.5) {$0_1$};
\node at (5.5,2.5) {$0_0$};

\node at (5.5,1) {\normalsize $S_0$};


\draw[step=1cm,gray,very thin] (10,2) grid (13,11);

\draw[fill=red!20, draw=red!50!black]  (10,3) rectangle (11,2);
\draw[fill=black!20]  (10,4) rectangle (11,3);
\draw[fill=green!20, draw=green!50!black]  (10,5) rectangle (11,4);

\draw[fill=red!20!white, draw=red!50!black]  (12,5) rectangle (13,4);
\draw[fill=white!80!black]  (12,6) rectangle (13,5);
\draw[fill=green!20!white, draw=green!50!black]  (12,7) rectangle (13,6);

\draw[fill=white!80!black]  (11,5) rectangle (12,4) node (v2) {};

\draw[fill=white!80!black]  (12,8) rectangle (13,7);
\draw[fill=red!20!white, draw=red!50!black]  (10,9) rectangle (13,8);
\draw[fill=white!80!black]  (10,10) rectangle (11,9);
\draw[fill=green!20!white, draw=green!50!black]  (10,11) rectangle (13,10);

\node at (11.5,10.5) {$1_1$};
\node at (11.5,8.5) {$1_0$};
\node at (12.5,6.5) {$0_11_1$};
\node at (12.5,4.5) {$0_11_0$};
\node at (10.5,4.5) {$0_01_1$};
\node at (10.5,2.5) {$0_01_0$};

\node at (11.5,1) {\normalsize $S_1$};


\node (v1) at (13.5,2.5) {\normalsize $c_1$};
\draw [->] (v1) edge (v2);


\node (v3) at (10.5,11) {};
\node (v7) at (11.5,11) {};
\node (v12) at (12.5,11) {};
\node (v16) at (6,4.5) {};
\node (v11) at (6,3.5) {};
\node (v17) at (6,2.5) {};
\node (v4) at (10.5,11.5) {};
\node (v8) at (11.5,12) {};
\node (v13) at (12.5,12.5) {};
\node (v5) at (9,11.5) {};
\node (v9) at (8.5,12) {};
\node (v14) at (8,12.5) {};
\node (v15) at (8,4.5) {};
\node (v10) at (8.5,3.5) {};
\node (v6) at (9,2.5) {};

\draw [-](v3) -- (v4.center) -- (v5.center) -- (v6.center);
\draw [-](v7) -- (v8.center) -- (v9.center) -- (v10.center);
\draw [-](v12) -- (v13.center) -- (v14.center) -- (v15.center);
\draw [-] (v15.center) edge (v16);
\draw [->] (v15) edge (v16);
\draw [-] (v10.center) edge (v11);
\draw [->] (v10) edge (v11);
\draw [-] (v6.center) edge (v17);
\draw [->] (v6) edge (v17);

\node at (8.5,1) {\normalsize $f_0$};


\draw (0.3334,10.9989) node (v18) {} rectangle (3.3334,7.9989);
\draw (0.3334,7.9989) rectangle (3.3334,4.9989);
\draw (0.3334,4.9989) rectangle (3.3334,1.9989) node (v22) {};

\draw [fill=green!20!white, draw=green!50!black]  (v18) rectangle (0.6654,1.9987);
\draw [fill=white!80!black] (0.6654,4.9984) rectangle (0.9987,1.9987) node (v19) {};
\draw [fill=red!20, draw=red!50!black]  (v19) rectangle (1.332,10.9978) node (v20) {};
\draw [fill=white!80!black] (v20) rectangle (1.6653,7.9981);
\draw [fill=green!20!white, draw=green!50!black]  (1.6653,10.9978) rectangle (1.9986,7.9981);
\draw [fill=white!80!black] (1.9986,10.9978) rectangle (2.3319,7.9981);
\draw [fill=red!20, draw=red!50!black]  (2.3319,10.9978) rectangle (2.6652,7.9981);
\draw [fill=white!80!black] (2.3319,7.9981) rectangle (2.6652,4.9984) node (v21) {};
\draw [fill=green!20!white, draw=green!50!black]  (2.3319,4.9984) rectangle (2.6652,1.9987);
\draw [fill=white!80!black] (v21) rectangle (2.9985,1.9987);
\draw [fill=red!20, draw=red!50!black]  (2.9985,4.9984) rectangle (v22);

\node at (2,1) {\normalsize $f_0(S_1)$};

\node (v24) at (3.333,10.9989) {};
\node (v25) at (4.9995,1.9998) {};
\node (v26) at (5.9994,4.9995) {};
\draw [-,dotted] (v23) edge (v24);
\draw [-,dotted] (v22) edge (v25);
\node at (-1.3332,-4.3329) {};
\end{tikzpicture}
\end{scriptsize}
\ \\[-0.6cm]
\caption{The embedding $f_0:B_1\into S_0$.}
\label{fig:f0}
\end{figure}

\noindent
{\bf The discrete structure of blocks and snakes within them.}
We will now describe a discrete structure within $\IR^2$ that will be used to represent information from repetition-free words. 
This structure consists of certain {\em blocks} $B_n := [0, w_n] \times [0,h_n]$
of a suitable {\em width} $w_n$ and and a suitable {\em height} $h_n$.
We call subsets of the form $[i,i+1]\times[j,j+1]\In\IR^2$ {\em tiles}.
Within the blocks $B_n$ we identify subsets $G_n,M_n,R_n\In B_n$ that 
are unions of tiles. The sets $G_n$ will be displayed in green and they will be used to encode certain bits of value $1$.
The sets $R_n$ will be displayed in red and they will be used to encode certain bits of value $0$.
The sets $M_n$ will be displayed in gray and they are middle sets that are used to separate bits. 
The union $S_n:=G_n\cup M_n\cup R_n$ will constitute a connected chain of tiles, called {\em snake}.
The construction proceeds inductively using previous parts
of the same color that are shifted using certain {\em corner points} $c_n$. 
For a point $c\in\IR^2$ and a set $A\In\IR^2$, we use the notation
$c+A:=\{c+x:x\in A\}$. 

\begin{localdef}[Blocks]
We define numbers $w_n,h_n\in\IN$, points $c_n\in\IR^2$ and
sets $B_n,G_n, M_n, R_n,S_n \subseteq\IR^2$ for all $n\in\IN$ as follows:
\begin{enumerate}
\item $w_n := 2n(w_{n-1} + 1) - 1$ for $n>0$, $w_0 := 1$ \hfill (width)
\item $h_{n} := 2n(h_{n-1}-1) + 5$  for $n>0$, $h_0 := 3$ \hfill (height)
\item $c_n := (w_{n-1}+1,h_{n-1}-1)$ for $n>0$ \hfill (corner)
\item $B_n := [0, w_n] \times [0,h_n]$ \hfill (block)
\item $G_{n} := [0,w_n] \times [h_n-1,h_n] \cup \bigcup_{k=0}^{2n-1} (kc_n+G_{n-1} )$  \hfill (green)
\item $R_{n} := [0,w_n] \times [h_n-3,h_n-2] \cup \bigcup_{k=0}^{2n-1} (kc_n + R_{n-1})$  \hfill (red)
\item $M_{n}  := ([0,1] \times [h_n-2, h_n-1]) \cup ([w_n-1,w_n] \times [h_n-4, h_n-3])\cup \bigcup_{k=0}^{2n-1}  \left( kc_n + M_{n-1} \right) \cup  \bigcup_{k=1}^{2n-1} \left(kc_n + ([- 1,0] \times [0,1]) \right)$ \hfill (middle)
\item $S_n := G_n \cup R_n \cup M_n$ \hfill (snake)
\end{enumerate}
\end{localdef}

The construction is illustrated in Figures \ref{fig:f0} and \ref{fig:f1}. The sets $G_n$, $M_n$, and $R_n$ are green, gray, and red, respectively.
The following observations are immediate.

\begin{localclaim}
\label{claim:dim2-2}
$G_n,M_n,R_n,S_n\In\IR^2$ are closed and they satisfy for all $n\in\IN$:
\begin{enumerate}
\item $S_n=G_n\cup M_n\cup R_n\In B_n$ and $G_n\cap R_n=\emptyset$,
\item $S_n$ is a chain of tiles, i.e., all tiles in $S_n$ but $[0,1] \times [0,1]$ and $[w_n-1,w_n] \times [h_n-1,h_n]$ are edge-connected to exactly two other tiles in $S_n$,
\item $kc_n + S_{n-1} \subseteq S_n$ for all $0 \leq k \leq 2n$.
\end{enumerate}
\end{localclaim}

\begin{figure}[htb]
\begin{tiny}
\hspace*{-1cm}\begin{tikzpicture}[xscale=0.82,yscale=0.3]
\useasboundingbox  rectangle (17.5,48.5);


\draw[step=1cm,gray,very thin] (2,0) grid (17,37);

\draw[fill=red!20, draw=red!50!black]  (2,1) rectangle (3,0);
\draw[fill=black!20]  (2,2) rectangle (3,1);
\draw[fill=green!20, draw=green!50!black]  (2,3) rectangle (3,2);
\draw[fill=red!20!white, draw=red!50!black]  (4,3) rectangle (5,2);
\draw[fill=white!80!black]  (4,4) rectangle (5,3);
\draw[fill=green!20!white, draw=green!50!black]  (4,5) rectangle (5,4);
\draw[fill=white!80!black]  (3,3) rectangle (4,2);
\draw[fill=white!80!black]  (4,6) rectangle (5,5);
\draw[fill=red!20!white, draw=red!50!black]  (2,7) rectangle (5,6);
\draw[fill=white!80!black]  (2,8) rectangle (3,7);
\draw[fill=green!20!white, draw=green!50!black]  (2,9) rectangle (5,8);

\node at (3.5,8.5) {$0_02_1$};
\node at (3.5,6.5) {$0_02_0$};
\node at (4.5,4.5) {$0_01_12_1$};
\node at (4.5,2.5) {$0_01_12_0$};
\node at (2.5,2.5) {$0_01_02_1$};
\node at (2.5,0.5) {$0_01_02_0$};

\draw[fill=white!80!black]  (5,9) rectangle (6,8);

\draw[step=1cm,gray,very thin] (6,8) node (v3) {} grid (9,17) node (v1) {};
\draw[fill=red!20, draw=red!50!black]  (6,9) rectangle (7,8);
\draw[fill=black!20]  (6,10) rectangle (7,9);
\draw[fill=green!20, draw=green!50!black]  (6,11) rectangle (7,10);
\draw[fill=red!20!white, draw=red!50!black]  (8,11) rectangle (9,10);
\draw[fill=white!80!black]  (8,12) rectangle (9,11);
\draw[fill=green!20!white, draw=green!50!black]  (8,13) rectangle (9,12);
\draw[fill=white!80!black]  (7,11) rectangle (8,10);
\draw[fill=white!80!black]  (8,14) rectangle (9,13);
\draw[fill=red!20!white, draw=red!50!black]  (6,15) rectangle (9,14);
\draw[fill=white!80!black]  (6,16) rectangle (7,15);
\draw[fill=green!20!white, draw=green!50!black]  (6,17) rectangle (9,16);

\node at (7.5,16.5) {$0_12_1$};
\node at (7.5,14.5) {$0_12_0$};
\node at (8.5,12.5) {$0_11_12_1$};
\node at (8.5,10.5) {$0_11_12_0$};
\node at (6.5,10.5) {$0_11_02_1$};
\node at (6.5,8.5) {$0_11_02_0$};

\draw[fill=white!80!black]  (9,17) rectangle (10,16);

\draw[step=1cm,gray,very thin] (10,16) grid (13,25) node (v1) {};
\draw[fill=red!20, draw=red!50!black]  (10,17) rectangle (11,16);
\draw[fill=black!20]  (10,18) rectangle (11,17);
\draw[fill=green!20, draw=green!50!black]  (10,19) rectangle (11,18);
\draw[fill=red!20!white, draw=red!50!black]  (12,19) rectangle (13,18);
\draw[fill=white!80!black]  (12,20) rectangle (13,19);
\draw[fill=green!20!white, draw=green!50!black]  (12,21) rectangle (13,20);
\draw[fill=white!80!black]  (11,19) rectangle (12,18);
\draw[fill=white!80!black]  (12,22) rectangle (13,21);
\draw[fill=red!20!white, draw=red!50!black]  (10,23) rectangle (13,22);
\draw[fill=white!80!black]  (10,24) rectangle (11,23);
\draw[fill=green!20!white, draw=green!50!black]  (10,25) rectangle (13,24);

\node at (11.5,24.5) {$1_02_1$};
\node at (11.5,22.5) {$1_02_0$};
\node at (12.5,20.5) {$1_00_12_1$};
\node at (12.5,18.5) {$1_00_12_0$};
\node at (10.5,18.5) {$1_00_02_1$};
\node at (10.5,16.5) {$1_00_02_0$};

\draw[fill=white!80!black]  (13,25) rectangle (14,24);

\draw[step=1cm,gray,very thin] (14,24) grid (17,33) node (v1) {};
\draw[fill=red!20, draw=red!50!black]  (14,25) rectangle (15,24);
\draw[fill=black!20]  (14,26) rectangle (15,25);
\draw[fill=green!20, draw=green!50!black]  (14,27) rectangle (15,26);
\draw[fill=red!20!white, draw=red!50!black]  (16,27) rectangle (17,26);
\draw[fill=white!80!black]  (16,28) rectangle (17,27);
\draw[fill=green!20!white, draw=green!50!black]  (16,29) rectangle (17,28);
\draw[fill=white!80!black]  (15,27) rectangle (16,26);
\draw[fill=white!80!black]  (16,30) rectangle (17,29);
\draw[fill=red!20!white, draw=red!50!black]  (14,31) rectangle (17,30);
\draw[fill=white!80!black]  (14,32) rectangle (15,31);
\draw[fill=green!20!white, draw=green!50!black]  (14,33) rectangle (17,32);

\node at (15.5,32.5) {$1_12_1$};
\node at (15.5,30.5) {$1_12_0$};
\node at (16.5,28.5) {$1_10_12_1$};
\node at (16.5,26.5) {$1_10_12_0$};
\node at (14.5,26.5) {$1_10_02_1$};
\node at (14.5,24.5) {$1_10_02_0$};

\draw[fill=white!80!black]   (16,34) rectangle (17,33);

\draw[fill=red!20!white, draw=red!50!black]  (2,35) rectangle (17,34);
\draw[fill=white!80!black]   (2,36) rectangle (3,35);
\draw[fill=green!20!white, draw=green!50!black]  (2,37) rectangle (17,36);

\node at (9.5,34.5) {$2_0$};
\node at (9.5,36.5) {$2_1$};

\node at (1,0.5) {\normalsize $S_2$};


\node (v2) at (7.5,5.5) {\normalsize $c_2$};
\draw [->] (v2) edge (v3);


\draw[fill=red!20, draw=red!50!black]  (2,40.5) rectangle (3,39.5);
\draw[fill=black!20]  (2,41.5) rectangle (3,40.5);
\draw[fill=green!20, draw=green!50!black]  (2,42.5) rectangle (3,41.5);
\draw[fill=red!20!white, draw=red!50!black]  (4,42.5) rectangle (5,41.5);
\draw[fill=white!80!black]  (4,43.5) rectangle (5,42.5);
\draw[fill=green!20!white, draw=green!50!black]  (4,44.5) rectangle (5,43.5);
\draw[fill=white!80!black]  (3,42.5) rectangle (4,41.5);
\draw[fill=white!80!black]  (4,45.5) rectangle (5,44.5);
\draw[fill=red!20!white, draw=red!50!black]  (2,46.5) rectangle (5,45.5);
\draw[fill=white!80!black]  (2,47.5) rectangle (3,46.5);
\draw[fill=green!20!white, draw=green!50!black]  (2,48.5) rectangle (5,47.5);

\node at (1,40) {\normalsize $S_1$};


\node (v4) at (2.5,37) {};
\node (v6) at (3.5,37) {};
\node (v8) at (4.5,37) {};
\node (v10) at (5.5,37) {};
\node (v12) at (6.5,37) {};
\node (v24) at (7.5,37) {};
\node (v26) at (8.5,37) {};
\node at (10.5,37) {};
\node at (11.5,37) {};
\node (v22) at (12.5,37) {};
\node (v20) at (13.5,37) {};
\node (v18) at (14.5,37) {};
\node (v16) at (15.5,37) {};
\node (v14) at (16.5,37) {};
\node (v5) at (2.5,40) {$0_01_0$};
\node (v7) at (2.5,41) {};
\node (v9) at (2.5,42) {$0_01_1$};
\node (v11) at (3.5,42) {};
\node (v13) at (4.5,42) {$0_11_0$};
\node (v25) at (4.5,43) {};
\node (v27) at (4.5,44) {$0_11_1$};
\node at (4.5,45) {};
\node at (4.5,46) {};
\node at (3.5,46) {$1_0$};
\node (v23) at (2.5,46) {};
\node (v21) at (2.5,47) {};
\node (v19) at (2.5,48) {};
\node (v17) at (3.5,48) {$1_1$};
\node (v15) at (4.5,48) {};
\draw [->] (v4) edge (v5);
\draw [->] (v6) edge (v7);
\draw [->] (v8) edge (v9);
\draw [->] (v10) edge (v11);
\draw [->] (v12) edge (v13);
\draw [->] (v14) edge (v15);
\draw [->] (v16) edge (v17);
\draw [->] (v18) edge (v19);
\draw [->] (v20) edge (v21);
\draw [->] (v22) edge (v23);
\draw [->] (v24) edge (v25);
\draw [->] (v26) edge (v27);
\node at (9,38.5) {\normalsize $...$};

\node at (10.5,43.5) {\normalsize $f_1$};
\end{tikzpicture}
\end{tiny}
\ \\[-0.6cm]
\caption{The embedding $f_1:B_2\into S_1$.}
\label{fig:f1}
\end{figure}

\noindent
{\bf Coding of repetition-free words in the discrete structure.}
In this part of the proof we describe how the discrete structure with the 
snakes $S_n$ can be used to encode repetition-free words into sets $E_n\In S_n$ that depend on these words. 
For this purpose we first define a map that removes the number $k$ from
a repetition-free word and decrements all entries that are greater than $k$ by one.

\begin{localdef}
We define $r : \mathbb{N} \times W_* \to W_*$ as follows
for $u,v\in W_*,k\in\IN$:
\begin{enumerate}
\item $r(k,u) := u$ if $u \in W_{k}$,
\item $r(k,uk_bv) := r(k,uv)$,
\item $r(k,u(n+1)_bv) := r(k,u)n_br(k,v)$ for $n+1>k$.
\end{enumerate}
\end{localdef}

We will use this map $r$ in order to define a map $E_n:W_{n+1}\to\AA_-(S_n)$ that shows
how we encode repetition-free words $w\in W_{n+1}$ as closed subsets of $S_n$.

\begin{localdef}
\label{def:En}
For all $n\in\IN$ we define a map $E_n:W_{n+1}\to\AA_-(S_n)$ inductively as follows for $w\in W_{n+1}$: 
\begin{enumerate}
\item $E_n(\varepsilon) := S_n$
\item $E_n(n_0w) := [0,w_n] \times [h_n-3,h_n-2]$
\item $E_n(n_1w) := [0,w_n] \times [h_n-1,h_n]$
\item $E_n(k_bw) := (2k+b)c_n + E_{n-1}(r(k,w))$ for $k < n$
\end{enumerate}
\end{localdef}

The sets $E_0(w), E_1(w)$ and $E_2(w)$ are illustrated in Figures~\ref{fig:f0} and \ref{fig:f1} by the given words $w$.
By comparing the recursive definition of $E_n$ with the definitions of $G_n$ and $R_n$, we see that if $n_0 \in\range(w)$, 
then $E_n(w) \subseteq G_n$ and if $n_1 \in \range(w)$, then $E_n(w) \subseteq R_n$. 
Together with the fact that $G_n\cap R_n=\emptyset$ this will enable us to recover the bits $b$ with $n_b\in\range(w)$
from $E_n(w)$. 

\begin{localclaim}
\label{claim:dim2-3}
For all $n,k\in\IN$, $b\in\{0,1\}$ and $w\in W_{n+1}$  we have
\begin{enumerate}
\item $E_n(w) \subseteq S_n$,
\item $E_n(wk_b) \subseteq E_n(w)$,
\item $n_0 \in \range(w)\TO E_n(w) \subseteq R_n$, and $n_1 \in \range(w)\TO E_n(w) \subseteq G_n$.
\end{enumerate}
\end{localclaim}
\begin{proof}
We prove all claims by induction.
\begin{enumerate}
\item By induction on $w$, which holds trivially for $w = \varepsilon$. For the inductive case: $E_n(n_bw) \subseteq S_n$ by definition, and for $k < n$ we have 
\[\qquad\qquad E_n(k_bw) = (2k+b)c_n + E_{n-1}(r(k,w)) \subseteq (2k+b)c_n + S_{n-1}\] 
by induction hypothesis, so $E_n(k_bw) \subseteq S_n$ by Claim~\ref{claim:dim2-2}.

\item By induction on $w$. Base case, 
\[\qquad\qquad E_n(k_b) = {(2k+b)c_n + E_{n-1}(\varepsilon)} =  {(2k+b)c_n} + S_{n-1} \subseteq S_n = E_n(\varepsilon)\]  by Claim~\ref{claim:dim2-2}. 
Inductive case, $w = i_du$. First subcase, $i = n$: $E_n(n_duk_b) = E_n(n_du)$. Second subcase, $i < n$, so 
\[\qquad\qquad E_n(i_duk_b) = (2i+d)c_n + E_{n-1}(r(i,uk_b)) = (2i+d)c_n + E_{n-1}(r(i,u)k'_b)\] 
with $k' \in \{k,k-1\}$. By induction hypothesis $E_{n-1}(r(i,u)k'_b) \subseteq E_{n-1}(r(i,u))$ so $E_n(i_duk_b) \subseteq (2i+d)c_n + E_{n-1}(r(i,u)) = E_n(i_du)$.

\item We prove only the first statement, and by (2) it suffices to prove that $E_n(un_0) \subseteq R_n$ for all $u\in W_n$. 
By induction on $u$. In the base case, $E_n(\varepsilon n_0) \subseteq R_n$ holds by definition. In the inductive case we obtain
\[\qquad\qquad E_n(k_bun_0) = (2k+b)c_n + E_{n-1}(r(k,u)(n-1)_0) \subseteq (2k+b)c_n + R_{n-1}\]
by induction hypothesis. Since $(2k+b)c_n + R_{n-1} \subseteq R_n$ by definition, it follows that $E_n(k_bun_0) \subseteq R_n$.
\qedhere
\end{enumerate}
\end{proof}

Finally, we mention that the recursive definition of $E_n$ together with Claim~\ref{claim:dim2-3}~(2) implies the following.

\begin{localclaim}
\label{claim:En-computable}
$E_n: W_{n+1} \to \mathcal{A}_-(B_n)$ is computable for all $n\in\IN$.
\end{localclaim}

\noindent
{\bf Computable embedding of blocks into snakes.}
As stated above, the $S_i$ on different levels are connected via computable embeddings $f_n : B_{n+1} \hookrightarrow S_n$. The precise form of the $f_n$ is irrelevant for our purposes, we merely demand that they map every \emph{stripe} $C_{n+1}^k := [k,k+1] \times [0,h_{n+1}]$ for $k \leq w_n - 1$ in $B_{n+1}$ to a specific tile in $S_n$. Clearly, adjacent stripes have to be mapped into edge-adjacent tiles for a continuous embedding to exist, and this requirement is sufficient.
We will state our specific requirements inductively.

\begin{localclaim}
\label{claim:dim2-4}
There exists a computable sequence $(f_n)_n$ of computable embeddings $f_n:B_{n+1}\into S_n$ such that
for $n>0$, $i < w_n$ and $k \leq 2n-1$
\begin{enumerate}
\item $f_0(C^0_1) \subseteq [0,1] \times [0,1]$ \hfill (base cases)
\item $f_0(C^1_1) \subseteq [0,1] \times [1,2]$
\item $f_0(C^2_1) \subseteq [0,1] \times [2,3]$
\item $f_{n}(C_{n+1}^{k(w_n +1)+i}) \subseteq kc_{n} + f_{n-1}(C^i_n)$ \hfill (snake)
\item $f_n(C_{n+1}^{k(w_n +1)-1}) \subseteq kc_n +( [-1,0] \times [0,1])$ if $k\geq 1$ \hfill (padding stripe)
\item 
         $f_n(C_{n+1}^{w_{n+1}-2w_n-2}) \subseteq [w_n-1,w_n] \times [h_n - 4, h_n -3]$ \hfill (padding stripe)
\item 
        $f_n(C_{n+1}^{w_{n+1}-2w_n-1+i}) \subseteq [w_n-i-1,w_n-i] \times [h_n-3,h_n-2]$ \hfill (snake)
\item 
        $f_n(C_{n+1}^{w_{n+1}-w_n-1}) \subseteq [0,1] \times [h_n-2,h_n-1]$ \hfill (padding stripe)
\item 
        $f_n(C_{n+1}^{w_{n+1}-w_n+i}) \subseteq [i,i+1] \times [h_n-1, h_n]$ \hfill (snake) 
\end{enumerate}
\end{localclaim}
\begin{proof}
Clearly, adjacent stripes have to be mapped into edge-adjacent tiles for a continuous embedding $f_n:B_{n+1}\into S_n$ 
to exist, and this requirement is sufficient. According to the definition above this requirement is satisfied and since
each snake $S_n$ consists of finitely many tiles, there is also a computable embedding $f_n$ that satisfies these requirements.
Since the inductive definition of the sequence $(S_n)_n$ is computable in a uniform way depending on $n$, it follows
that there is also a computable sequence $(f_n)_n$ of embeddings that satisfies the given requirements.
\end{proof}

The embeddings $f_0:B_1\into S_0$ and $f_1:B_2\into S_1$ are illustrated in Figures~\ref{fig:f0} and \ref{fig:f1}.
The case of $f_1$ is already a prototype for the general situation of embeddings $f_n:B_{n+1}\into S_n$ with $n\geq1$.
Essentially, consecutive stripes of $B_{n+1}$ are mapped by $f_n$ into consecutive tiles in $S_n$ one by one.

The definition of the embeddings $f_n$ matches the definition of the sets $E_m$ in such a way that they are preserved
in a particular way. In order to express this result precisely, we first define a function $s_n:W_\infty\to W_{n +1}$ that removes
all irrelevant information from the input sequence or word $p$, i.e., it removes all entries with a first
component $k>n$.

\begin{localdef}
For all $n\in\IN$ we define a function $s_n:W_\infty\to W_{n+1}$ by:
\begin{enumerate}
\item $s_n(p):=\varepsilon$ if $k>n$ for all $k_b\in\range(p)$
\item $s_n(k_bu):=k_bs_n(u)$ if $k\leq n$ and
\item $s_n(k_bu):=s_n(u)$ if $k>n$.
\end{enumerate}
\end{localdef}

Now we can derive the following from the respective definitions. 

\begin{localclaim}
\label{claim:dim2-8}
$E_{m+1}(w)\In f_m^{-1}(E_m(s_m(w)))$ for all $w\in W_{m+2}$ and $m\in\IN$.
\end{localclaim}

\noindent
{\bf The reduction function.}
Now we can define the actual reduction function that we are going to use for the reduction
$\LLPO_\infty\leqSW\ConC_{B_0}$.

\begin{localdef}
We define a function 
\[A:W_\infty\to\AA_-(B_0),p\mapsto\bigcap_{n=0}^\infty (f_0\circ...\circ f_{n-1})(E_n(s_n(p))).\]
\end{localdef}

\begin{localclaim}
\label{claim:dim2-5}
$A:W_\infty\to\AA_-(B_0)$ is computable and given a point $x\in A(p)$, we can computably
reconstruct some $q\in\LLPO_\infty(p)$.
\end{localclaim}
\begin{proof}
Since
$(s_n)_n$ and $(E_n)_n$ are computable sequences, and since $E_n(s_n(p)))\In B_n$ and $(B_n)_n$ is a computable
sequence of compact sets, it follows that also $((f_0\circ...\circ f_{n-1})(E_n(s_n(p))))_n$ is a computable sequence
of compact sets whose intersection is compact and can be computed as a closed set.
Given a point $x\in A(p)$ we can reconstruct $\LLPO_\infty(p)(n)$ by computing 
$f_{n-1}^{-1}\circ...\circ f_0^{-1}(x)\in E_n(s_n(p))$. By Claim~\ref{claim:dim2-3} we have that
$n_0\in\range(p)$ implies $E_n(s_n(p))\In R_n$ and $n_1\in\range(p)$ implies $E_n(s_n(p))\In G_n$.
Since $R_n$ and $G_n$ consists of finitely many tiles and are clearly disjoint, we can find one possible
value for $\LLPO_\infty(p)(n)=b$ such that $n_b\not\in\range(p)$.
\end{proof}

What remains to be proved in order to conclude that $A$ yields the reduction $\LLPO_\infty\leqSW\ConC_{B_0}$ is
to show that $A(p)$ is always non-empty and connected.

\begin{localclaim}
\label{claim:dim2-6}
If the sets
\[A_m(w):=\bigcap_{n=0}^m(f_n\circ...\circ f_{m-1})^{-1}(E_n(s_n(w)))\In B_m\]
are non-empty and connected for all $w\in W_*$, then so is $A(p)$ for all $p\in W_\infty$.
\end{localclaim}
\begin{proof}
If $A_m(w)$ is non-empty and connected for all $w\in W_*$, then so is
\[K_m:=f_0\circ...\circ f_{m-1}(A_m(s_m(p)))=\bigcap_{n=0}^m f_0\circ...\circ f_{n-1}(E_n(s_n(p)))\In B_0\]
for all $p\in W_\infty$. Since the $K_m$ are also compact, $A(p)=\bigcap_{m=0}^\infty K_m$
is a decreasing chain of non-empty continua and hence itself a non-empty
continuum by \cite[Corollary~6.1.19]{Eng89}.
\end{proof}

Note that any $A_m(w)$ is a union of tiles in $S_m$. We can be more specific, though, and this will be useful in the proof.\\

\noindent
{\bf Connectedness of the sets $A(p)$.}
By mutual induction we define the notions of a segment in $S_n$ and a slice in $B_n$.
This will help us to prove that the sets $A(p)$ are connected.

\begin{localdef}[Segments and slices]
\label{def:segmentslice}
Let the notions of a \emph{segment} in $S_n$ and of a \emph{slice} in $B_n$ be defined by mutual induction for all $n\in\IN$:
\begin{enumerate}
\item\label{seg-slice1} $B_0$ is a slice in $B_0$.
\item\label{seg-slice2} If $G$ is a segment in $S_{n}$, then $f_{n}^{-1}(G)$ is a slice in $B_{n+1}$.
\item\label{seg-slice3} $S_n$ is a segment in $S_n$.
\item\label{seg-slice4} If $L$ is a slice in $B_n$, then $L\cap([0,w_n] \times [h_n-3, h_n-2])$ is a segment in $S_n$.
\item\label{seg-slice5} If $L$ is a slice in $B_n$, then $L\cap([0,w_n] \times [h_n-1, h_n])$ is a segment in $S_n$.
\item\label{seg-slice6} If $G$ is a segment in $S_{n-1}$ and $k \leq 2n-1$, then $kc_n + G$ is a segment in $S_n$.
\end{enumerate}
\end{localdef}

\begin{localclaim}
\label{claim:dim2-7}
We obtain the following for all $n\in\IN$:
\begin{enumerate}
\item $B_n$ is a slice in $B_n$.
\item Every slice in $B_n$ is of the form $[a,b] \times [0,h_n]$ with $a < b$.
\item Every segment in $S_n$ is non-empty and connected.
\item $E_n(w)$ is a segment in $S_n$ for every $w\in W_{n+1}$.
\end{enumerate}
\end{localclaim}
\begin{proof}
(1) follows from Definition~\ref{def:segmentslice} (\ref{seg-slice1}), (\ref{seg-slice2}) and (\ref{seg-slice3}).

(2) and (3) By mutual induction on the definition of segments and slices. 

(4) That $E_n(\varepsilon)=S_n$ is a segment follows from Definition \ref{def:segmentslice} (\ref{seg-slice3}). That $E_n(n_bw)$ is a segment follows from Definition \ref{def:segmentslice} (4,5) with the help of (1). The case $E_n(k_bw)$ with $k < n$ follows from Definition \ref{def:segmentslice} (6) by induction.
\end{proof}

Next we show that the sets $A_m$ and $E_m$ are essentially identical.

\begin{localclaim}
\label{claim:dim2-9}
$A_m=E_m\circ s_m$ for all $m\in\IN$.
\end{localclaim}
\begin{proof}
For $m=0$ the claim follows from the definition.
An inspection of the definition of $A_m(w)$ from Claim~\ref{claim:dim2-6} shows that for $m>0$
\begin{eqnarray}
\label{equ:dim2-9}
A_{m}(w)=f_{m-1}^{-1}(A_{m-1}(w))\cap E_{m}(s_{m}(w))
\end{eqnarray}
for all $w\in W_*$. We prove by induction on $m\in\IN$ that $A_m(w)=E_{m}(s_{m}(w))$ for all $w\in W_*$.
For $m=0$ this holds by definition of $A_0$. Given the induction claim for $m$, we obtain by Claim~\ref{claim:dim2-8}
\[E_{m+1}(w)\In f_m^{-1}(E_m(s_m(w)))\In f_m^{-1}(A_m(w))\] 
for all $w\in W_{m+2}$.
This in turn implies $A_{m+1}(w)=E_{m+1}(s_{m+1}(w))$ for all $w\in W_*$ using equation~(\ref{equ:dim2-9}).
\end{proof}

This easily implies the final claim.

\begin{localclaim}
$A(p)$ is non-empty and connected for all $p\in W_\infty$.
\end{localclaim}
\begin{proof}
By Claim~\ref{claim:dim2-6} it is sufficient to show that all the sets $A_m(w)$ for $w\in W_*$ are connected and non-empty.
However, this is the case according to Claims~\ref{claim:dim2-9} and \ref{claim:dim2-7}~(3,4).
\end{proof}
 
Together with Claim~\ref{claim:dim2-5} this completes the proof of $\LLPO_\infty\leqSW\ConC_{B_0}$ and
hence it completes the proof of Theorem~\ref{thm:dim2}.
\end{proof}

Even though Theorem~\ref{thm:dim2} completes our characterization of the Brouwer Fixed Point Theorem in dimension $2$,
it raises some further questions. The construction provided by Proposition~\ref{prop:twisted-cube} has the property that the values of
$T$ are even pathwise connected sets. Let us denote by $\PWCC_n$ the restriction of $\ConC_n$ to pathwise connected sets. Then a part of
Theorem~\ref{thm:dimension-three} can be strengthened to the following result.

\begin{corollary}[Pathwise connected choice]
$\PWCC_n\equivSW\C_{[0,1]}$ for all $n\geq 3$.
\end{corollary}

However, we are left with the following open question.

\begin{question}
Is $\PWCC_2\equivW\C_{[0,1]}$?
\end{question}

At least the construction in the proof of Theorem~\ref{thm:dim2} does not answer this question
since it yields a connected set $A(p)$ that is not pathwise connected.

We can draw some further interesting conclusions from the construction of the sets $A(p)$
that is related to the work of Iljazovi{\'c} \cite{Ilj09}, who studied computability properties
of chainable decomposable continua\footnote{Thanks to an anonymous referee for pointing
out this connection.}.
We recall that a continuum $A\In[0,1]^n$ is called {\em decomposable} if it is the union of two of its proper
subcontinua. And $A$ is called {\em chainable}, if for every $\varepsilon>0$ there exists 
an $\varepsilon$--chain $C_0,...,C_m$ that covers $A$. For $C_0,...,C_m\In[0,1]^n$ to be
an {\em $\varepsilon$--chain} means that the $C_0,...,C_m$ are non-empty open sets with $\diam(C_i)<\varepsilon$
and such that $C_i\cap C_j\not=\emptyset$ holds if and only if $|i-j|\leq1$.
The following is a consequence of \cite[Theorem~44]{Ilj09}.

\begin{proposition}[Iljazovi{\'c} 2009]
\label{prop:Ilja09}
Every co-c.e.\ chainable and decomposable continuum $A\In[0,1]^n$ contains a dense subset of computable points.
\end{proposition}

The construction of the sets $A(p)$ in the proof of Theorem~\ref{thm:dim2} guarantees that
there is a computable point $p$ such that $A(p)$ does not contain any computable point.
It is also easy to see that the sets $A(p)$ are chainable. As a conclusion we obtain the following.

\begin{corollary}
There is a non-empty co-c.e.\ chainable continuum $A\In[0,1]^2$ that does not contain any
computable point.
\end{corollary}

As a consequence of this results and Proposition~\ref{prop:Ilja09} it follows that the corresponding
set $A$ is necessarily indecomposable.

\section{The Displacement Principle}
\label{sec:displacement}

In this section, we want to prove a displacement principle that provides some information on the
power of binary choice $\C_{\{0,1\}}$ on the left-hand side of a reduction. 
We will apply this principle in Section~\ref{sec:idempotency} to prove that $\ConC_1$ is not idempotent.
In order to 
prove our result we first need to study the convergence relation of $\AA_-(X)$ induced by $\psi_-$.
This convergence relation can be characterized in terms of {\em closed upper limits} as defined by Hausdorff. 
For a sequence $(A_i)_{i\in\IN}$ of closed subsets of a topological space $X$ 
the {\em closed upper limit} of $(A_i)_{i\in\IN}$ is defined by
\[\Ls(A_i):=\bigcap_{k=0}^\infty\overline{\bigcup_{i=k}^\infty A_i}.\]
The common notation $\Ls$ is derived from the fact that this is also called the 
{\em topological limit superior} of $(A_i)_{i\in\IN}$. The set $\Ls(A_i)$ is always closed and possibly empty. 
If $X$ is compact and all the $A_i$ are non-empty, then $\Ls(A_i)$ is also compact and non-empty
by Cantor's Intersection Theorem.
We mention the following known characterization of the topological limit superior by Choquet 
(see, for instance, Proposition~5.2.2 in \cite{Bee93}).

\begin{fact}[Choquet]
\label{fact:Ls}
Let $X$ be a Hausdorff space and let $\NN_x$ denote the set of open neighborhoods of $x\in X$. For each
sequence $(A_i)_{i\in\IN}$ of closed sets $A_i\In X$ one has
\[\Ls(A_i)=\{x\in X:(\forall U\in\NN_x)(\forall k)(\exists i\geq k)\;U\cap A_i\not=\emptyset\}.\] 
\end{fact}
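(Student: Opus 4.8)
The plan is to prove the asserted equality by unwinding both sides until they become the same statement about open neighbourhoods of $x$. Write $B_k:=\bigcup_{i=k}^\infty A_i$ for each $k\in\IN$, so that by definition $\Ls(A_i)=\bigcap_{k=0}^\infty\overline{B_k}$. Hence $x\in\Ls(A_i)$ if and only if $x\in\overline{B_k}$ for every $k\in\IN$.

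The key step is the elementary characterisation of topological closure, valid in any topological space (no separation axiom needed): a point $x$ lies in $\overline S$ if and only if $U\cap S\not=\emptyset$ for every open neighbourhood $U\in\NN_x$ of $x$, and it is enough to let $U$ range over a neighbourhood basis at $x$. Applying this with $S=B_k$ gives: $x\in\overline{B_k}$ iff for every $U\in\NN_x$ we have $U\cap B_k\not=\emptyset$, i.e.\ iff for every $U\in\NN_x$ there is some $i\geq k$ with $U\cap A_i\not=\emptyset$.

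Combining the two observations, $x\in\Ls(A_i)$ iff for every $k\in\IN$ and every $U\in\NN_x$ there is $i\geq k$ with $U\cap A_i\not=\emptyset$. Interchanging the two universal quantifiers $\forall k$ and $\forall U\in\NN_x$ (which is always legitimate) yields precisely the right-hand side of the claimed identity, completing the argument.

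I do not expect any real obstacle here; the whole proof is a quantifier manipulation on top of the standard neighbourhood description of closure. The only points worth flagging are that the Hausdorff hypothesis stated in the Fact is not actually used for this particular equality (it is relevant only for the surrounding remarks, such as non-emptiness and compactness of $\Ls(A_i)$ via Cantor's Intersection Theorem), and that in the concrete situation of interest, namely $X=[0,1]^n$ with the maximum norm and the representation $\psi_-$, it suffices to let $U$ run through the basic open balls $B(x,\varepsilon)$ rather than all neighbourhoods, which is what makes this characterisation usable for the convergence relation on $\AA_-(X)$.
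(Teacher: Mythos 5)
Your proof is correct. Note that the paper itself does not prove this Fact at all: it is quoted as a known result with a reference to Beer's textbook (Proposition~5.2.2 in the cited source), so there is no ``paper's own proof'' to compare against. Your argument is the natural elementary verification: unwind $\Ls(A_i)=\bigcap_k\overline{\bigcup_{i\geq k}A_i}$, apply the neighbourhood characterisation of closure $x\in\overline{S}\iff(\forall U\in\NN_x)\,U\cap S\neq\emptyset$ to $S=\bigcup_{i\geq k}A_i$, distribute the intersection with $U$ over the union, and interchange the two universal quantifiers. Each step is valid, and the chain of equivalences is complete. Your side remark is also correct and worth making: the Hausdorff hypothesis plays no role in this identity (the closure characterisation holds in any topological space); it is only used in the surrounding discussion, e.g.\ to conclude via Cantor's Intersection Theorem that $\Ls(A_i)$ is non-empty and compact when $X$ is compact and the $A_i$ are non-empty, since one needs compact sets to be closed for that argument. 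Likewise, your observation that one may restrict $U$ to a neighbourhood basis (basic open balls in the metric setting) is exactly what Lemma~\ref{lem:closed-upper-limit} relies on implicitly.
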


It is well-known that the topological limit superior (and the related topological limit inferior) are
used to define Kuratowski-Painlev\'e convergence, which is closely related to convergence with respect to the Fell topology 
(see Chapter~5 in \cite{Bee93}). 
Here we characterize the convergence relation of $\AA_-(X)$ in terms of the topological limit superior.
For a sequence $(A_i)_{i\in\IN}$ and a set $A$ in $\AA_-(X)$ we write $A_i\to A$ if there are $p_i$ and $p$
such that $\psi_-(p_i)=A_i$, $\psi_-(p)=A$ and $p_i\to p$. We note that this convergence relation on $\AA_-(X)$
is not unique in general, i.e., one sequence $(A_i)_{i\in\IN}$ can have many different limits. The following result
gives an exact characterization.

\begin{lemma}[Closed upper limit]
\label{lem:closed-upper-limit}
Let $X$ be a computable metric space and let $A_i,A\in\AA_-(X)$ for all $i\in\IN$.
Then $A_i\to A$ if and only if $\Ls(A_i)\In A$.
\end{lemma}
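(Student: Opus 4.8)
The plan is to prove both directions of the biconditional, using the definition $\psi_-(p) = X \setminus \bigcup_i B_{p(i)}$ and Choquet's characterization of $\Ls$ from Fact~\ref{fact:Ls}.

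First I would prove the forward direction: assume $A_i \to A$, witnessed by $p_i \to p$ with $\psi_-(p_i) = A_i$ and $\psi_-(p) = A$. I want to show $\Ls(A_i) \In A$, equivalently $A^{\cc} \In \Ls(A_i)^{\cc}$, i.e.\ that every point in the complement of $A$ is eventually excluded. So let $x \notin A$. Then $x \in B_{p(k)}$ for some $k$, hence $x$ has an open neighborhood $U$ with $\overline{U} \In B_{p(k)}$. Since $p_i \to p$, for all sufficiently large $i$ we have $p_i$ agreeing with $p$ on a long enough prefix that $p_i(k) = p(k)$, so $B_{p_i(k)} = B_{p(k)} \supseteq U$, hence $U \cap A_i = \emptyset$ for all large $i$. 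By Fact~\ref{fact:Ls} this means $x \notin \Ls(A_i)$. Thus $\Ls(A_i) \In A$.

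Next the converse: assume $\Ls(A_i) \In A$; I must produce names $p_i$ for $A_i$ and $p$ for $A$ with $p_i \to p$. The idea is to build $p$ by diagonalizing: enumerate a name for $A = X \setminus \bigcup_j B_{c_j}$ (some fixed enumeration of balls exhausting $A^{\cc}$), and interleave, for each such ball $B_{c_j}$ whose closure is compactly contained in $A^{\cc}$, a search that waits until $B_{c_j}$ (or a slightly smaller ball with the same closure) is covered by the complement-enumeration of $A_i$ for all $i$ up to some stage — this must eventually succeed precisely because $\Ls(A_i) \In A$ means $B_{c_j}$ meets only finitely many $A_i$ (using that a relatively compact neighborhood of $B_{c_j}$ in $A^{\cc}$ meets no $\Ls(A_i)$, hence by Choquet meets $A_i$ for only finitely many $i$; here one uses local compactness, or works with the balls of the metric space directly and a compactness-of-closed-balls argument — but the cleanest route is simply: for each ball $B$ with $\overline{B} \In A^{\cc}$, the set $\{i : B \cap A_i \neq \emptyset\}$ is finite, which follows from Fact~\ref{fact:Ls} since otherwise any cluster point of a sequence of witnesses would lie in $\Ls(A_i) \cap \overline{B} = \emptyset$, noting $\overline{B} \In A^{\cc}$ and $\overline{B}$ compact). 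Then one constructs the $p_i$ compatibly: $p_i$ first emits whatever balls $A_i$'s own name emits, but is "dragged toward" $p$ by also being required to emit, at stage $s$, a prefix agreeing with $p$'s stage-$s$ approximation on those balls already certified excluded-from-all-$A_j$. The bookkeeping is arranged so that the common prefix of $p_i$ and $p$ grows with $i$, giving $p_i \to p$.

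The main obstacle is the converse direction, and specifically making the diagonal construction of the $p_i$ precise: one must simultaneously (a) ensure each $p_i$ really names $A_i$ — so every ball genuinely in $A_i^{\cc}$ must eventually be listed — and (b) force $p_i \to p$ — so the finitely many balls on which $A_i^{\cc}$ and $A^{\cc}$ "disagree" must be pushed to later and later positions as $i \to \infty$. The tension is resolved by the finiteness fact above: for any fixed ball $B$ with $\overline B \In A^{\cc}$, there is $N$ with $B \cap A_i = \emptyset$ for $i \geq N$, so one can safely make all $p_i$ with $i \geq N$ list $B$ early, matching $p$. Care is needed because this bound $N$ is not computable from the data, but the convergence relation is a purely topological (non-effective) statement, so a non-uniform construction suffices; I would organize it as a priority-free but stage-by-stage amalgamation, and I expect the verification that the resulting $p_i$ are valid names converging to a valid name of $A$ to be the most delicate bookkeeping in the argument.
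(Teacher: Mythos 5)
The forward direction is essentially the paper's argument (your detour through $\overline U \In B_{p(k)}$ is unnecessary: once $p_i(k)=p(k)$, the ball $U:=B_{p(k)}$ itself already misses $A_i$). Your overall strategy for the converse --- prepend longer and longer prefixes of a name $p$ of $A$ to names $p_i$ of $A_i$ --- is also the paper's. But the finiteness fact you rest it on, namely that for every ball $B$ with $\overline B\In A^{\cc}$ the set $\{i:B\cap A_i\neq\emptyset\}$ is finite, genuinely needs $\overline B$ to be compact, as you yourself flag; the trouble is that the lemma is stated for an arbitrary computable metric space, where closed balls need not be compact. In $X=\Baire$ no closed ball is compact and the claim fails outright: take $A=\{0^\IN\}$ and $A_i=\{0^\IN,\,1i0^\IN\}$; then $\Ls(A_i)=\{0^\IN\}=A$, yet the basic ball $B=\{r:r(0)=1\}$ satisfies $\overline B=B\In A^{\cc}$ and meets every single $A_i$. (In a non-locally-compact space one may also have points of $A^{\cc}$ lying in no ball with compact closure, so your intended enumeration might not even cover $A^{\cc}$.)

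The repair is to abandon the compactness argument and use Choquet per point together with separability, choosing the name $p$ rather than quantifying over all small balls. Since $\Ls(A_i)\In A$, every $x\in A^{\cc}$ satisfies $x\notin\Ls(A_i)$, so Fact~\ref{fact:Ls} directly provides an open $U_x\ni x$ and $k_x\in\IN$ with $U_x\cap A_i=\emptyset$ for all $i\geq k_x$; shrink $U_x$ to a basic rational ball $B_{n_x}$ with $x\in B_{n_x}\In U_x\cap A^{\cc}$. The family $\{B_{n_x}\}_{x\in A^{\cc}}$ covers $A^{\cc}$, and by Lindel\"of (separability of $X$) a countable subfamily already covers; list it as $p$, so that $\psi_-(p)=A$ and each $B_{p(n)}$ misses $A_i$ for all large $i$ by construction. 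Your prefix-prepending $q_i:=p|_{m_i}p_i$ then works with no compactness hypothesis, and the messy stage-by-stage bookkeeping you anticipated dissolves. For what it is worth, the paper's own converse is terse exactly here: it fixes an arbitrary name $p$ of $A$ and asserts the eventual disjointness of $B_{p(n)}$ from the $A_i$ ``by Fact~\ref{fact:Ls}'', which the Baire-space example above shows can fail for a badly chosen $p$; so the careful selection of $p$ really must be part of the argument.
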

\begin{proof}
Let $p_i$ and $p$ be such that $\psi_-(p_i)=A_i$, $\psi_-(p)=A$.

We now assume $p_i\to p$.
Let $x\not\in A$. Then there is some basic open neighborhood $B_m$ of $x$ that
is eventually listed in position $j$ of $p$. Since the $p_i$ converge to $p$, there is
a $k\in\IN$ such that $B_m$ is also listed in position $j$ of $p_i$ for all $i\geq k$. 
According to Fact~\ref{fact:Ls} this means that $x\not\in\Ls(A_i)$.
Hence, we have proved $\Ls(A_i)\In A$.

Let us now assume that $\Ls(A_i)\In A$. It suffices to find $q_i$ with $\psi_-(q_i)=A_i$ and $q_i\to p$.
We choose $q_i:=p|_{m_i}p_i$, where $p|_{m_i}$ is the prefix of $p$ of suitable length $m_i$.
It is clear that $q_i\to p$ follows if the $m_i$ are increasing without bound, so we need to prove that we can choose 
such $m_i$ with $\psi_-(q_i)=\psi_-(p_i)$. We note that for each $n$ the set $U=B_{p(n)}$
does not intersect $A$, i.e., $U\cap A=\emptyset$ and hence there is some $k$ such that for all $i\geq k$
we have $U\cap A_i=\emptyset$ by Fact~\ref{fact:Ls}. That means that we can add the ball $B_{p(n)}$ to 
the negative information of $A_i$ without changing $A_i$. This guarantees the existence of a suitable
unbounded increasing sequence $m_i$.
\end{proof}

This result implies that the convergence relation on $\AA_-(X)$ induced by $\psi_-$ is the
convergence relation of the upper Fell topology and hence $\psi_-$ is admissible with respect to this
topology (which was already known, see \cite{Sch02c}).
We introduce some further terminology. 
If $\SS\In\AA_-(X)$, then we denote by 
\[\overline{\SS}:=\{A\in\AA_-(X):(\exists (A_i)_{i\in\IN}\in\SS^\IN)\;\Ls(A_i)\In A\}\] 
the {\em sequential closure} of $\SS$ in $\AA_-(X)$ and by 
\[2\SS:=\{A\in\SS:(\exists A_1,A_2\in\SS)(A_1\cap A_2=\emptyset\mbox{ and }A_1\cup A_2\In A)\}\]
the set of those sets in $\SS$ that have two disjoint subsets in $\SS$.
By $\C_X|_\SS$ we denote the restriction of $\C_X$ to $\SS$.

\begin{theorem}[Displacement Principle]
Let $f$ be a multi-valued function on represented spaces, let $X$ be a computable metric space
and let $\SS\In\AA_-(X)$. Then
\[f\times\C_{\{0,1\}}\leqW\C_X|_\SS\TO f\leqW\C_X|_{\SS\cap2\overline{\SS}}.\]
An analogous statement holds with $\leqW$ replaced by $\leqSW$ in both instances.
\end{theorem}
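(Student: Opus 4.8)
Suppose $f\times\C_{\{0,1\}}\leqW\C_X|_\SS$, witnessed by computable $K,H$ so that $K\langle\id,GH\rangle\vdash f\times\C_{\{0,1\}}$ for every $G\vdash\C_X|_\SS$. I want to extract from these a reduction of $f$ alone to $\C_X$ restricted to the smaller class $\SS\cap 2\overline{\SS}$. The key observation is that the $\C_{\{0,1\}}$ factor on the left is essentially \emph{free}: no matter which point of the $\C_X|_\SS$-instance we are handed back, we can read off from it a bit solving whatever binary-choice question we posed. So the idea is to use this freedom to encode a binary question whose answer \emph{displaces} the point we receive away from one half of the set, thereby certifying that the set we built genuinely has two disjoint subsets in $\SS$ — i.e.\ lies in $2\overline{\SS}$ — while its actual solution still solves $f$.

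First I would set up the reduction. Given an $f$-input $x$ with name $p$, I need to produce a $\C_X|_\SS$-instance; but $H$ expects a name of an instance of $f\times\C_{\{0,1\}}$, which is a pair $(x,r)$ with $r\in\dom(\C_{\{0,1\}})$. The natural move is to feed $H$ the pair $(p,0^\IN)$, so that the $\C_{\{0,1\}}$-component is the \emph{fully undetermined} instance $0^\IN$ with $\C_{\{0,1\}}(0^\IN)=\{0,1\}$. Then $A:=\psi_-(H\langle p,0^\IN\rangle)\in\SS$, and whatever point $y\in A$ we get back, $K\langle\langle p,0^\IN\rangle, q\rangle$ (where $q$ is a name of $y$) outputs a pair $(z,j)$ with $z\in f\delta_X(p)$ and $j\in\{0,1\}$ — but both values $j=0$ and $j=1$ are \emph{legal} outputs here. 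This is the crux: for each $j\in\{0,1\}$, the set of names $q$ of points $y\in A$ for which $K$ eventually commits to output-bit $j$ is effectively recognizable, and these two name-sets partition (a superset of) all legal $q$. Projecting through $\psi_-$, I would let $A_j\In A$ be the closure of the set of points $y\in A$ whose (signed-digit) names $q$ make $K$ output bit $j$; then $A_0\cup A_1$ is dense in $A$ — in fact $A_0\cup A_1=A$ by a compactness/König argument, since every point of $A$ has \emph{some} name, and along any name $K$ must eventually commit because the $\{0,1\}$-component of a valid output cannot stay undetermined. Moreover the $A_j$ can be produced as $\psi_-$-names \emph{from} the data, using the continuity/clopen structure of the commitment event.

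The subtle point is whether $A\in 2\overline{\SS}$. We need disjoint $A_1',A_2'\in\SS$ with $A_1'\cup A_2'\In A$. The sets $A_0,A_1$ are disjoint\,? — not obviously, since a point can have two names leading to different bits. Here I would instead argue with the \emph{perturbation trick} familiar from this area: replace $0^\IN$ by the two fully-determined binary instances $10^\IN$ (forcing answer $1$) and $010^\IN$ (forcing answer $0$), obtaining $B_1:=\psi_-(H\langle p,10^\IN\rangle)$ and $B_0:=\psi_-(H\langle p,010^\IN\rangle)$. Since $H$ is continuous and these agree with $0^\IN$ on longer and longer prefixes, $B_1,B_0$ converge to $A$ in $\AA_-(X)$, so $\Ls(\text{suitable sequence})\In A$ and each $B_j\in\overline{\SS}$ — this is where Lemma~\ref{lem:closed-upper-limit} does the work. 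Now on $B_1$, every legal output bit is $1$; on $B_0$, every legal output bit is $0$; hence the components of $f$-solutions extracted from $B_1$ versus $B_0$ carry opposite bits, which forces $B_1\cap B_0=\emptyset$ once we intersect with the open region where $K$'s bit-commitment is locally constant (shrinking $B_0,B_1$ to closed pieces $A_1',A_2'\in\overline{\SS}$, still with $A_1'\cup A_2'\In A$ up to taking a common refinement that still lies in $\SS$ by closure under the relevant operations). That exhibits $A\in 2\overline{\SS}$, and combined with $A\in\SS$ gives $A\in\SS\cap 2\overline{\SS}$; applying $\C_X|_{\SS\cap2\overline\SS}$ to $A$ and post-composing with the first projection of $K\langle\langle p,0^\IN\rangle,\cdot\rangle$ yields $f(x)$.

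The main obstacle I expect is exactly making the two "halves" genuinely disjoint and genuinely in $\SS$ simultaneously: committing $K$'s output bit forces a \emph{local} constancy (an open condition), but $\psi_-$-describing a \emph{closed} piece sitting inside that open region and still belonging to $\SS$ (not merely $\overline{\SS}$) requires care — one must intersect $A$ with cleverly chosen closed balls tracking the commitment, check the result is still in $\SS$, and verify the two resulting pieces are disjoint because their points carry incompatible $K$-bits. The strong-reducibility version runs identically, noting $K$ no longer sees the original input, so all the commitment events must be read off the returned point alone — which is fine since that is exactly what the construction uses.
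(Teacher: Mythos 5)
Your overall plan---feed the blank $\C_{\{0,1\}}$-instance $0^\IN$ into the second slot to get $A:=\psi_-(H\langle p,0^\IN\rangle)\in\SS$, then perturb that slot to exhibit two disjoint pieces of $A$ in $\overline{\SS}$---is the right skeleton and matches the paper's strategy, but the execution breaks down at the two places the definition of $\overline{\SS}$ is built to handle. Taking just the two \emph{fixed} perturbations $10^\IN$ and $010^\IN$ gives sets $B_1,B_0\in\SS$, but nothing forces $B_0\cup B_1\In A$: the input modificator $H$ is merely continuous and can send $\langle p,10^\IN\rangle$ to a closed set completely unrelated to $A$ (your phrase ``agree with $0^\IN$ on longer and longer prefixes'' is false for two fixed strings). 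The inclusion in $A$ is exactly what requires taking an entire sequence of perturbations $p_{j,i}\to 0^\IN$ and passing to the closed upper limit: Lemma~\ref{lem:closed-upper-limit} gives $\Ls_i\bigl(\psi_-(H\langle p,p_{j,i}\rangle)\bigr)\In A$, and this $\Ls$ lies in $\overline{\SS}$ by the very definition of sequential closure. Your remark that $B_j\in\overline{\SS}$ is vacuous (it is already in $\SS$) and misses what the closure is there for.

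The disjointness also does not hold as you claim. If $y\in B_0\cap B_1$ has name $q$, the output modificator $K$ is applied to two \emph{different} arguments, $\langle\langle p,10^\IN\rangle,q\rangle$ and $\langle\langle p,010^\IN\rangle,q\rangle$, and is perfectly free to emit $1$ on one and $0$ on the other; no contradiction arises. The proposed repair---intersecting with ``the open region where $K$'s bit-commitment is locally constant'' and ``shrinking''---is not a construction that visibly stays in $\overline{\SS}$ and inside $A$, and it is not how the argument runs. What does work is a continuity argument at the single point $\langle\langle p,0^\IN\rangle,q\rangle$: if $\delta_X(q)$ lay in $\Ls_i\bigl(\psi_-(H\langle p,p_{0,i}\rangle)\bigr)\cap\Ls_i\bigl(\psi_-(H\langle p,p_{1,i}\rangle)\bigr)$, then by Choquet's characterization of $\Ls$ (Fact~\ref{fact:Ls}), every neighborhood of $\langle\langle p,0^\IN\rangle,q\rangle$ contains valid inputs to $K$ that must output bit $0$ and others that must output bit $1$, contradicting continuity of the bit component of $K$. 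Both the inclusion in $A$ and the disjointness thus come from the $\Ls$ of whole sequences, so your version with a single fixed pair of perturbations leaves a genuine gap at exactly the technical heart of the theorem.
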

\begin{proof}
We use the computable metric space $(X,\delta_X)$ and represented spaces $(Y,\delta_Y)$ and $(Z,\delta_Z)$.
We assume that $f$ is of type $f:\In Y\mto Z$ and we use $\C_{\{0,1\}}\equivSW\LLPO$ (see \cite{BBP12}).
Let $H,K:\In\IN^\IN\to\IN^\IN$ be computable functions that witness the reduction $f\times\LLPO\leqW\C_X|_\SS$,
i.e., $H\langle\id,GK\rangle\vdash f\times\LLPO$ whenever $G\vdash\C_X|_\SS$.

We recall that $\LLPO:\In\IN^\IN\mto\IN$ is defined such that for $j\in\{0,1\}$ and $p\in\{0,1\}^\IN$ it holds that
$j\in\LLPO(p)\iff(\forall i)\;p(2i+j)=0$,
where $\dom(\LLPO)$ contains all sequences $p$ such that $p(k)\not=0$ for at most one $k$.
We consider the inputs $p_{j,i}:=0^{2i+j+1}10^\IN$ and $p_\infty:=0^\IN$ for $\LLPO$.
We obtain $\LLPO(p_{j,i})=\{j\}$ for $j\in\{0,1\}$ and $\LLPO(p_\infty)=\{0,1\}$.
For every $p\in\dom(f\delta_Y)$, $i\in\IN$ and $j\in\{0,1\}$ we now define
\[A_{j,i}^p:=\psi_-K\langle p,p_{j,i}\rangle\mbox{ and }A_\infty^p:=\psi_-K\langle p,p_\infty\rangle.\]
Since $p_{j,i}\to p_\infty$ for $i\to\infty$, continuity of $K$ implies $\Ls(A_{j,i}^p)\In A_\infty^p$ for $j\in\{0,1\}$
by Lemma~\ref{lem:closed-upper-limit}. Now we consider the corresponding subsets of $\dom(H)$:
\[B_j^p:=\bigcup_{i=0}^\infty\left\langle\{\langle p,p_{j,i}\rangle\}\times\delta_X^{-1}(A_{j,i}^p)\right\rangle,
  B_\infty^p:=\left\langle\{\langle p,p_\infty\rangle\}\times\delta_X^{-1}(A_\infty^p)\right\rangle.
\]
By $\pi_j$ we denote the projection on the $j$--th component of a tuple in Baire space.
Then $h:=\delta_\IN\pi_2 H:\In\IN^\IN\to\IN$ is a computable function such that 
$h|_{B_j^p}$ is constant with value $j$ for $j\in\{0,1\}$. 
We claim that due to continuity of $h$ this implies $\Ls(A_{0,i}^p)\cap\Ls(A_{1,i}^p)=\emptyset$.
Let us assume that $q$ is such that $\delta_X(q)\in\Ls(A_{0,i}^p)\cap\Ls(A_{1,i}^p)$.
Then $\delta_X(q)\in A_\infty^p$ and hence $r:=\langle\langle p,p_\infty\rangle,q\rangle\in B_\infty^p\In\dom(h)$.
Let now $U$ be a neighborhood of $r$ and let $j\in\{0,1\}$.
By Fact~\ref{fact:Ls} the point $\delta_X(q)$ is a cluster point of the sequence $(A_{j,i}^p)_{i\in\IN}$ and hence
there is a sequence $(q_i){i\in\IN}$ with $\delta_X(q_i)\in A_{j,i}^p$ for all $i$ with a subsequence that converges to $q$.
Hence, for some sufficiently large $i$ we obtain $\langle\langle p,p_{j,i}\rangle,q_i\rangle\in B_j^p\cap U$, 
which means $B_j^p\cap U\not=\emptyset$ for $j\in\{0,1\}$.
Hence $h|_U$ has to take both values $0$ and $1$ on any neighborhood $U$ of $r$, which contradicts continuity of $h$. 
This proves the claim $\Ls(A_{0,i}^p)\cap\Ls(A_{1,i}^p)=\emptyset$.

Altogether, we have proved $A_\infty^p\in2\overline{\SS}$ and $A_\infty^p\in\SS$ is clear.
We now define computable functions $H',K':\In\IN^\IN\to\IN^\IN$ by
\[H'\langle p,q\rangle:=\pi_1H\langle\langle p,p_\infty\rangle,q\rangle\mbox{ and }K'(p)=\pi_1K\langle p,p_\infty\rangle.\]
Then $H'\langle\id,GK'\rangle\vdash f$ whenever $G\vdash\C_X|_{\SS\cap2\overline{\SS}}$, i.e., $f\leqW\C_X|_{\SS\cap2\overline{\SS}}$.
If $H$ does not depend on the first component, then $H'=\pi_1H$ also does not depend on the first component. 
Hence the claim also holds for strong reducibility $\leqSW$ in place of $\leqW$. 
\end{proof}

If $\SS$ only contains non-empty closed sets $A\In X$ and $X$ is compact, then $\overline{\SS}$
also contains only non-empty sets and $2\overline{\SS}$ contains only sets that have at least
two points. Hence we obtain the following corollary, where $\UU(X):=\{\{x\}:x\in X\}$ denotes the
set of singleton subsets of $X$.

\begin{corollary}
\label{cor:displacement}
Let $f$ be a multi-valued function on represented spaces, 
let $X$ be a compact computable metric space and let $\SS\In\AA_-(X)\setminus\{\emptyset\}$.
Then
\[f\times\C_{\{0,1\}}\leqW\C_X|_\SS\TO f\leqW\C_X|_{\SS\setminus\UU(X)}.\]
An analogous statement holds with $\leqW$ replaced by $\leqSW$ in both instances.
\end{corollary}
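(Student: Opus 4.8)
The plan is to read off the corollary directly from the Displacement Principle, the point being that under the extra hypotheses ``$X$ compact'' and ``$\SS$ contains no empty set'' the family $\SS\cap2\overline{\SS}$ appearing in the conclusion of that theorem is contained in $\SS\setminus\UU(X)$. Once this inclusion is established, the corollary follows by the trivial monotonicity of closed choice in its domain together with transitivity of Weihrauch reducibility.

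The key step is to show that every member of the sequential closure $\overline{\SS}$ is non-empty. Indeed, if $B\in\overline{\SS}$, then by definition there is a sequence $(B_i)\in\SS^\IN$ with $\Ls(B_i)\In B$; each $B_i$ is a non-empty closed subset of the compact space $X$, hence a non-empty compact set, so by Cantor's Intersection Theorem (as recalled just before Fact~\ref{fact:Ls}) the closed upper limit $\Ls(B_i)$ is non-empty, whence $B\supseteq\Ls(B_i)\neq\emptyset$. Now let $A\in\SS\cap2\overline{\SS}$. By the definition of $2\overline{\SS}$ there are $A_1,A_2\in\overline{\SS}$ with $A_1\cap A_2=\emptyset$ and $A_1\cup A_2\In A$; by the preceding observation $A_1$ and $A_2$ are both non-empty, so choosing a point in each produces two distinct points of $A$, and therefore $A$ is not a singleton, i.e.\ $A\notin\UU(X)$. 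This proves $\SS\cap2\overline{\SS}\In\SS\setminus\UU(X)$.

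To conclude, I would note that for any families $\TT\In\TT'\In\AA_-(X)$ one has $\C_X|_\TT\leqSW\C_X|_{\TT'}$, witnessed by taking both the input and output transformations to be the identity: every $\psi_-$-name of a set in $\TT$ is a $\psi_-$-name of a set in $\TT'$, and any point of that set is a correct answer for either restriction. Applying this with $\TT=\SS\cap2\overline{\SS}$ and $\TT'=\SS\setminus\UU(X)$, and composing with the conclusion $f\leqW\C_X|_{\SS\cap2\overline{\SS}}$ furnished by the Displacement Principle from the hypothesis $f\times\C_{\{0,1\}}\leqW\C_X|_\SS$, transitivity yields $f\leqW\C_X|_{\SS\setminus\UU(X)}$; the strong version follows verbatim since the Displacement Principle already supplies the $\leqSW$ analogue and the restriction reduction is itself strong. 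There is no real obstacle here: the only delicate point is the non-emptiness of members of $\overline{\SS}$, which is precisely where compactness of $X$ is used.
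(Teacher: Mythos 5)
Your proof is correct and follows the same route the paper sketches in the sentence preceding the corollary: compactness of $X$ together with non-emptiness of the members of $\SS$ forces every member of $\overline{\SS}$ to be non-empty, hence every member of $2\overline{\SS}$ to have at least two points, so $\SS\cap2\overline{\SS}\In\SS\setminus\UU(X)$, and the result follows from the Displacement Principle plus the (strong) monotonicity of domain restriction of $\C_X$. The only detail the paper leaves implicit and you spell out---that $\Ls(B_i)\neq\emptyset$ via Cantor's Intersection Theorem---is exactly what is noted just before Fact~\ref{fact:Ls}, so there is no divergence from the intended argument.
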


\section{Idempotency of Connected Choice in Dimension One}
\label{sec:idempotency}

The goal of this section is to prove that connected choice $\ConC_1$ of dimension one is 
not idempotent, i.e., $\ConC_1\nequivW\ConC_1\times\ConC_1$.
For this purpose we use $\ConC_1^-$, which is just the restriction of $\ConC_1$ 
to such connected sets that are not singletons. In \cite{BG11a} it was proved that
$\ConC_1^-\leqW\C_\IN$, which follows since one can just guess a rational number in a 
non-degenerate interval and with finitely many mind changes one can find a correct number.
Using the Displacement Principle we can prove the following result.

\begin{proposition}
\label{prop:CC-factor}
$\ConC_1\lW\ConC_1\times\C_{\{0,1\}}$.
\end{proposition}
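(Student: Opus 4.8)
The plan is as follows. The reduction $\ConC_1\leqW\ConC_1\times\C_{\{0,1\}}$ is immediate: $\C_{\{0,1\}}$ is pointed (the whole space $\{0,1\}$ is a computable point of its domain), so one routes an arbitrary instance of $\ConC_1$ into the first factor, a fixed computable instance into the second factor, and discards the second output. Hence everything comes down to showing the reduction is \emph{strict}, i.e.\ $\ConC_1\times\C_{\{0,1\}}\nleqW\ConC_1$.

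For this I would argue by contradiction and run the hypothetical reduction through the Displacement Principle. The key observation is that the non-empty connected closed subsets of $[0,1]$ are exactly the non-empty closed intervals, so $\ConC_1=\C_{[0,1]}|_\SS$ for $\SS:=\{[a,b]:0\le a\le b\le 1\}\In\AA_-([0,1])\setminus\{\emptyset\}$, and $[0,1]$ is a compact computable metric space. Therefore Corollary~\ref{cor:displacement}, applied with $f:=\ConC_1$, turns the assumption $\ConC_1\times\C_{\{0,1\}}\leqW\C_{[0,1]}|_\SS$ into $\ConC_1\leqW\C_{[0,1]}|_{\SS\setminus\UU([0,1])}$. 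Since $\SS\setminus\UU([0,1])$ is precisely the set of \emph{non-degenerate} closed subintervals of $[0,1]$, the right-hand side is exactly $\ConC_1^-$, so we obtain $\ConC_1\leqW\ConC_1^-$.

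The final step rules this out using known facts. Composing with $\ConC_1^-\leqW\C_\IN$ (recalled above from \cite{BG11a}) gives $\ConC_1\leqW\C_\IN$; but $\ConC_1\equivW\IVT$ by Corollary~\ref{cor:IVT}, whereas $\IVT\nleqW\C_\IN$ is known (see \cite{BG11a}), a contradiction. Hence $\ConC_1\times\C_{\{0,1\}}\nleqW\ConC_1$, which together with the first paragraph yields $\ConC_1\lW\ConC_1\times\C_{\{0,1\}}$. (Since the reduction of the first paragraph is strong and the Displacement Principle also has a strong version, the same chain in fact delivers $\ConC_1\lSW\ConC_1\times\C_{\{0,1\}}$.)

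The substantive ingredient here is the Displacement Principle itself, which has already been proved in the previous section; given that, the remaining steps are bookkeeping. The only points needing genuine care are (i) verifying that $\SS$ and $\SS\setminus\UU([0,1])$ are literally the intervals, respectively the non-degenerate intervals, so that $\C_{[0,1]}|_\SS$ and $\C_{[0,1]}|_{\SS\setminus\UU([0,1])}$ coincide on the nose with $\ConC_1$ and $\ConC_1^-$, and (ii) checking the hypotheses of Corollary~\ref{cor:displacement} (compactness of $[0,1]$, and $\SS$ containing no empty set). I do not anticipate a serious obstacle; if anything, the mildest risk is pinning the separation $\IVT\nleqW\C_\IN$ to the correct reference, but it is well established.
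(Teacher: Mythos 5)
Your argument is essentially identical to the paper's: show $\ConC_1\leqW\ConC_1\times\C_{\{0,1\}}$ trivially, assume the converse, apply Corollary~\ref{cor:displacement} to conclude $\ConC_1\leqW\ConC_1^-$, and contradict $\ConC_1^-\leqW\C_\IN$ together with $\ConC_1\nleqW\C_\IN$. The only cosmetic difference is that the paper cites $\ConC_1\nleqW\C_\IN$ directly from \cite{BG11a} rather than routing through the equivalence with $\IVT$.
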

\begin{proof}
It is clear that $\ConC_1\leqW\ConC_1\times\C_{\{0,1\}}$. 
Let us assume that also $\ConC_1\times\C_{\{0,1\}}\leqW\ConC_1$.
Then by Corollary~\ref{cor:displacement} we obtain $\ConC_1\leqW\ConC_1^-$.
Since $\ConC_1^-\leqW\C_\IN$ by \cite[Proposition~3.8]{BG11a}), we obtain
$\ConC_1\leqW\C_\IN$, which is a contradiction to \cite[Lemma~4.9]{BG11a}.
\end{proof}

While this result shows that binary choice $\C_{\{0,1\}}$ enhances the power
of connected choice $\ConC_1$ if multiplied with it, products of binary choice
with itself are not that powerful, as the next result shows.

\begin{proposition}
\label{prop:binary-choice}
$\C_{\{0,1\}}^*\leqSW\ConC_1^-$.
\end{proposition}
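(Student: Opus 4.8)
The plan is to show that one application of $\ConC_1^-$ is enough to simulate arbitrarily many parallel instances of binary choice $\C_{\{0,1\}}$, by encoding a finite list of $\LLPO$-instances as a single non-degenerate connected (i.e.\ interval) subset of $[0,1]$. Recall that $\C_{\{0,1\}}\equivSW\LLPO$, so $\C_{\{0,1\}}^*$ amounts to: given $k\in\IN$ and $k$ many inputs $p_1,\dots,p_k\in\dom(\LLPO)$, produce $(j_1,\dots,j_k)$ with $j_\ell\in\LLPO(p_\ell)$. I would first reduce $\C_{\{0,1\}}^*$ to the parallelization $\widehat{\C_{\{0,1\}}}$ restricted to finitely many non-trivial coordinates; since $\C_{\{0,1\}}$ is pointed (it has the computable input $0^\IN$), this is immediate, and it lets me treat $k$ as known and pad harmlessly.

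The geometric idea is to subdivide $[0,1]$ into $2^k$ closed dyadic subintervals $I_b$ indexed by bit strings $b\in\{0,1\}^k$, and to keep $I_b$ ``alive'' exactly as long as $b_\ell$ is a valid $\LLPO$-answer for $p_\ell$ for every $\ell\le k$. Concretely, from the inputs one computably enumerates, for each $\ell$, at most one witness $n$ with $p_\ell(2n+j)\ne 0$; when such a witness for bit value $j$ appears, we learn $j$ is \emph{wrong}, so we delete from our closed set every subinterval $I_b$ with $b_\ell=j$. Since at least one of $0,1$ is always correct for each coordinate, at no finite stage do we delete all of $I_b$ for all $b$; and because the surviving strings of different coordinates are ``independent'', the set of surviving intervals is always non-empty. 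The closed set $A$ produced this way is given by negative information (we only ever add open balls to the complement), and $A$ is nonempty and compact. To obtain a \emph{connected} non-degenerate set I would not take the union of the surviving $I_b$ directly — instead, arrange the construction so that $A$ is a single closed subinterval: rather than a flat $2^k$-way subdivision, use the tree-of-complexes picture of Section~\ref{sec:closed-sets-trees} at a single node, or more simply, observe that from the enumerated witnesses one can compute, with finitely many mind changes, a specific dyadic string $b^\star$ that stays correct forever, and output the corresponding interval $I_{b^\star}$; but since we want a \emph{single} $\ConC_1^-$-call without mind changes, instead let $A$ shrink monotonically (in negative information) to $I_{b^\star}$, which is exactly what deleting subintervals achieves, provided we are careful to only ever delete from one side so that what remains is always an interval. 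A clean way to guarantee this: order the $2^k$ strings so that the deletion rules always chop off a prefix or suffix; this can be done by choosing the indexing $b\mapsto I_b$ so that for each coordinate $\ell$ the strings with $b_\ell=0$ form a contiguous block — impossible for all $\ell$ simultaneously with a linear order, so instead I would work in $[0,1]^k$ first (a box subdivided into $2^k$ subcubes, deleting an axis-aligned half for each wrong bit, leaving a subbox hence connected) and then transport back to dimension one is \emph{not} available; so the correct statement to prove really is one-dimensional and the right move is the monotone-shrinking interval.

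Let me therefore commit to the monotone construction in $[0,1]$ directly: maintain a nested family of closed intervals $[a_s,b_s]$, starting from $[0,1]$; the inputs give a computable process that, at certain stages, instructs us to replace the current interval by its left or right half (left half if the newly discovered wrong bit tells us the ``answer code'' lies in the lower half at the relevant scale, right half otherwise), where we use a fixed computable bijection between $\{0,1\}^k$ and the $2^k$ dyadic subintervals of generation $k$ that respects the natural order on bit-strings read as binary expansions. Because for coordinate $\ell$ at most one of the two bit-values is ever declared wrong, the total number of halvings is at most $k$, the nested intersection $A=\bigcap_s[a_s,b_s]$ is a closed interval of length $\ge 2^{-k}>0$, hence connected and non-degenerate, and $A\in\dom(\ConC_1^-)$ with a computable negative description. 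Any point $x\in A$ lies in a unique surviving subinterval $I_{b}$ of generation $k$, and reading off $b=(b_1,\dots,b_k)$ from (a sufficiently precise approximation of) $x$ gives $b_\ell\in\LLPO(p_\ell)$ for all $\ell$; this read-off is computable and uses only $x$, so the reduction is strong. The main obstacle is precisely the bookkeeping that makes "delete the subintervals with a wrong bit in coordinate $\ell$" coincide with "halve the current interval on a fixed side": this requires choosing the bijection $\{0,1\}^k\leftrightarrow$ dyadic subintervals and the schedule of processing coordinates so that the deletions are always nested halvings rather than the removal of an interior chunk — which would disconnect the set. I expect this combinatorial scheduling to be the only nontrivial point; everything else (computability of the negative description, nonemptiness at every finite stage from validity of $\LLPO$, the final read-off) is routine.
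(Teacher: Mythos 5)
Your high-level plan --- encode a finite tuple of $\LLPO$-instances into a single non-degenerate closed interval and read the answers off any point of it --- is exactly the paper's strategy, but your geometric realization has a genuine gap that you yourself identify and then do not close. You correctly observe that with a \emph{fixed} $2^k$-ary dyadic subdivision of $[0,1]$ and a \emph{fixed} bijection $b\mapsto I_b$, it is impossible that for every coordinate $\ell$ the blocks $\{b : b_\ell=0\}$ and $\{b : b_\ell=1\}$ are both contiguous; hence deleting the wrong half of some coordinate disconnects the set. Your ``commit'' at the end --- maintain a nested family of halvings, choosing a bijection and a ``schedule'' so that each deletion chops off a single side --- is precisely the same impossible requirement in disguise: since you must output negative information on $A$ \emph{monotonically} and the $\LLPO$-instances may resolve in any order, the first coordinate to resolve dictates where the first halving happens, and a static binary (or $2^k$-ary) tree cannot simultaneously align every coordinate with a contiguous halving of the root. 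You flag this scheduling as ``the only nontrivial point'' and ``expect'' it to work, but in fact it is the crux, and with the data you allow yourself (a fixed linear bijection and nested halvings) it cannot be made to work.

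The paper's fix is a different data structure, not a cleverer bijection. Instead of a binary or $2^n$-ary tree labelled by bit-vectors, it uses a tree of nested rational intervals of branching degree $2n$ at each node, where an edge to child $2k-1$ or $2k$ records \emph{which} instance $A_k$ was resolved at that stage \emph{and} its value; the algorithm descends one level whenever a new instance gets resolved, so the resulting interval $I$ sits at depth $m\le n$ where $m$ is the number of resolved instances. Because the branching encodes the identity of the resolved coordinate, the order of resolution is absorbed into the path rather than into a precommitted bijection, and the set produced is always a single interval and hence connected and non-degenerate. The read-off is then: given $x\in I$, pick a level-$n$ interval $J\In I$ nearest to $x$ and reconstruct the sequence of (coordinate, value) decisions from the path to $J$; the first $m$ of these are the genuine decisions and determine valid answers for the resolved instances, while the unresolved instances admit either answer. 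This adaptive $2n$-ary encoding is exactly the missing ingredient in your argument. (Your reduction of $\C_{\{0,1\}}^*$ to a known-arity parallel instance via pointedness, and your observation that the negative description and final read-off are routine once the geometry is right, are both fine.)
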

\begin{proof}
Given a pair $\langle n,p\rangle$ as input to $\C_{\{0,1\}}^*$ we need
to construct a non-degenerate connected closed set $A\In[0,1]$ any point of which
allows us to find a point in $\C_{\{0,1\}}^n$ for input $p$. The input $p$ 
describes a product $A_1\times...\times A_n$ of non-empty sets $A_i\In\{0,1\}$
by an enumeration of the complement.

In order to construct $A$ we use an auxiliary tree of rational complexes with branching degree $2n$ 
in which each complex exists exactly of one rational interval $[a,b]$ with $a<b$. More precisely,
we start with the root $[\frac{1}{2n+2},\frac{1}{2n+1}]$ and on each successor node
of the tree we use $2n$ canonical pairwise disjoint subintervals of the previous interval,
sorted in the natural order.

We now describe how we use this tree to construct $A$. Given $p$ we start to produce the root interval
$[\frac{1}{2n+2},\frac{1}{2n+1}]$ as long as no negative information on any of the sets $A_1,...,A_n$
is available. If $A_k$ is the first of these sets that is determined by $p$,
then we proceed with child node number $2k-1$ or $2k$ depending on whether $A_k=\{0\}$ or $A_k=\{1\}$.
We then produce a description of the interval associated with this child node until further information on 
one of the sets $A_{k+1},...,A_n$ becomes available, in which case we proceed inductively as described above.

Altogether, this procedure produces an interval $I$ that is somewhere between the root level (in case that all the
sets $A_i$ remain undetermined) and level $n$ below the root level of the tree (in case that all the sets
$A_i$ are eventually determined). Given a point $x\in I$, we can find one of the (at most two) intervals $J$ on level $n$
that are closest to $x$ and included in $I$.  
Given $J$, we can reconstruct all decisions in the above algorithm 
and in this way we can produce a point $(x_1,...,x_n)\in A_1\times...\times A_n$.
\end{proof}

We mention that one can use the level (as introduced by Hertling \cite{Her96}) to prove that
$\C_{\{0,1\}}^*\lSW\ConC_1^-$. One can show that $\ConC_1^-$ has no level, whereas
the level of $\C_{\{0,1\}}^*$ is $\omega_0$. Since the level is preserved downwards by reducibility,
it follows that the reduction must be strict.

We arrive at the main result of this section.

\begin{theorem}[Non-idempotency]
\label{thm:idempotency}
$\ConC_1\lW\ConC_1\times\ConC_1\lW\ConC_2$.
\end{theorem}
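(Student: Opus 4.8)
The plan is to prove the two strict reductions $\ConC_1\lW\ConC_1\times\ConC_1$ and $\ConC_1\times\ConC_1\lW\ConC_2$ separately, in each case first supplying the non-strict direction and then refuting its converse by appealing to a result already established above.

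For $\ConC_1\times\ConC_1\lW\ConC_2$, I would obtain the reduction $\ConC_1\times\ConC_1\leqSW\ConC_2$ from the fact that a product of connected sets is connected: given non-empty connected closed $A,B\In[0,1]$, the set $A\times B$ is a non-empty connected closed subset of $[0,1]^2$, the assignment $(A,B)\mapsto A\times B$ is computable from $\AA_-([0,1])\times\AA_-([0,1])$ to $\AA_-([0,1]^2)$ (the complement of $A\times B$ in $[0,1]^2$ equals $(A^\cc\times[0,1])\cup([0,1]\times B^\cc)$, which is enumerable from the negative data of $A$ and $B$), and any $(x,y)\in A\times B$ splits into $x\in A$ and $y\in B$; thus $(\ConC_1\times\ConC_1)(A,B)=\ConC_2(A\times B)$ and the output modifier needs no access to the input. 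For strictness I would use that $\ConC_1$ is non-uniformly computable — every non-empty connected co-c.e.\ closed subset of $[0,1]$ contains a computable point, as observed in the proof of Proposition~\ref{prop:connected-dimension} — and deduce that $\ConC_1\times\ConC_1$ is non-uniformly computable too (a computable name of a pair yields computable names of both components, each of which contains a computable member). A reduction $\ConC_2\leqW\ConC_1\times\ConC_1$ would then transfer this property to $\ConC_2$: feeding a computable name of the connected co-c.e.\ closed set $A\In[0,1]^2$ without computable point from Corollary~\ref{cor:Orevkov-Baigger} through the input modifier yields a computable name of an instance $(A_1,A_2)$ of $\ConC_1\times\ConC_1$; choosing any realizer of $\ConC_1\times\ConC_1$ that returns a computable name of some computable pair $(x_1,x_2)\in A_1\times A_2$ and applying the output modifier produces a computable point of $A$, a contradiction.

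For $\ConC_1\lW\ConC_1\times\ConC_1$, the reduction $\ConC_1\leqSW\ConC_1\times\ConC_1$ is immediate: pair the input with the fixed computable connected closed set $\{0\}\In[0,1]$ and project the output back. For strictness I would combine Propositions~\ref{prop:binary-choice} and~\ref{prop:CC-factor}. From $\C_{\{0,1\}}^*\leqSW\ConC_1^-$ one gets $\C_{\{0,1\}}\leqSW\ConC_1$ (using $\C_{\{0,1\}}\leqSW\C_{\{0,1\}}^*$ and $\ConC_1^-\leqSW\ConC_1$), hence by monotonicity of the product $\ConC_1\times\C_{\{0,1\}}\leqW\ConC_1\times\ConC_1$. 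Were $\ConC_1\times\ConC_1\leqW\ConC_1$, this would give $\ConC_1\times\C_{\{0,1\}}\leqW\ConC_1$; since trivially $\ConC_1\leqW\ConC_1\times\C_{\{0,1\}}$, we would obtain $\ConC_1\equivW\ConC_1\times\C_{\{0,1\}}$, contradicting the strictness of $\ConC_1\lW\ConC_1\times\C_{\{0,1\}}$ asserted in Proposition~\ref{prop:CC-factor}.

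I do not expect a genuine obstacle here: the constructions are short and everything reduces to routine manipulation of the product operation and of non-uniform computability in the Weihrauch lattice. The only delicate point will be the realizer-selection step in the strictness of $\ConC_1\times\ConC_1\leqW\ConC_2$, where one must keep in mind that in the definition of Weihrauch reducibility the realizer of the oracle ranges over all realizers, not merely the computable ones, so one is free to pick one that sends the relevant name to a computable name of a computable solution.
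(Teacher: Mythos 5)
Your proposal is correct and follows the paper's own route: the two non-strict reductions come from pairing with a trivial set and from the product map $(A,B)\mapsto A\times B$, the strictness of $\ConC_1\lW\ConC_1\times\ConC_1$ is deduced exactly as in the text from Propositions~\ref{prop:CC-factor} and~\ref{prop:binary-choice}, and the strictness of $\ConC_1\times\ConC_1\lW\ConC_2$ uses the same contrast between the non-uniform computability of $\ConC_1$ and its failure for $\ConC_2$ (Corollary~\ref{cor:Orevkov-Baigger}). Your remark about realizer selection is accurate and merely makes explicit what the paper leaves implicit in the phrase ``non-uniformly computable.''
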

\begin{proof}
Firstly, it is clear that $\ConC_1\lW\ConC_1\times\C_{\{0,1\}}\leqW\ConC_1\times\ConC_1$ holds
by Propositions~\ref{prop:CC-factor} and \ref{prop:binary-choice}.  
Secondly, it is also clear that $\ConC_1\times\ConC_1\leqW\ConC_2$, since
the product map $(A,B)\mapsto A\times B$ is computable on closed sets and
the product of two connected sets is connected. 
Finally, $\ConC_1\times\ConC_1$ is non-uniformly computable,
whereas $\ConC_2$ is not by Proposition~\ref{prop:dimension-two} and hence $\ConC_1\times\ConC_1\lW\ConC_2$. 
\end{proof}

In particular, $\ConC_1$ is not idempotent and the same reasoning that was used in the proof 
shows that $\ConC_2\nleqW\ConC_1^*$ holds for the idempotent closure $\ConC_1^*$.
That means that not even an arbitrary finite number of copies of $\ConC_1$ in parallel is powerful
enough to compute connected choice in dimension two.
With Corollary~\ref{cor:IVT} we obtain the following conclusion of Theorem~\ref{thm:idempotency}.

\begin{corollary}
The Brouwer Fixed Point Theorem $\BFT_1$ of dimension one and 
the Intermediate Value Theorem $\IVT$ are both not idempotent.
\end{corollary}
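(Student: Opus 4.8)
The plan is to deduce this directly from Theorem~\ref{thm:idempotency} together with Corollary~\ref{cor:IVT}. Recall that a multi-valued function $f$ is called idempotent if $f\equivW f\times f$, and that — as recorded in Section~\ref{sec:Weihrauch-lattice} — idempotency is invariant under Weihrauch equivalence, so it is a genuine property of the underlying Weihrauch degree. Hence it suffices to exhibit one representative of the degree of $\BFT_1$ (equivalently, of $\IVT$) that fails to be idempotent, and then transport the conclusion along the known equivalences.

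First I would invoke Corollary~\ref{cor:IVT}, which gives $\IVT\equivSW\BFT_1\equivSW\ConC_1$; in particular all three operations lie in one and the same Weihrauch degree. Next I would appeal to the first strict reduction asserted in Theorem~\ref{thm:idempotency}, namely $\ConC_1\lW\ConC_1\times\ConC_1$. Since strict reducibility entails $\ConC_1\nequivW\ConC_1\times\ConC_1$, the operation $\ConC_1$ is, by the very definition of idempotency, not idempotent. Because idempotency is a degree property, the operations $\BFT_1$ and $\IVT$, being Weihrauch equivalent to $\ConC_1$, are not idempotent either.

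The only point worth spelling out is the passage from ``$\ConC_1$ is not idempotent'' to ``$\BFT_1$ and $\IVT$ are not idempotent'': this uses that $f\equivW g$ implies $f\times f\equivW g\times g$, which is the monotonicity of the product operation with respect to $\leqW$ (noted in Section~\ref{sec:Weihrauch-lattice}); consequently the equation $f\equivW f\times f$ transfers along any equivalence $f\equivW g$. I do not expect a genuine obstacle here, since all the substantive work has already been carried out in Theorem~\ref{thm:idempotency}; this corollary is merely a restatement of the dimension-one case in the more familiar vocabulary of the Brouwer Fixed Point Theorem and the Intermediate Value Theorem.
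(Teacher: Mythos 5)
Your argument is correct and matches the paper exactly: the corollary is obtained by combining the strict reduction $\ConC_1\lW\ConC_1\times\ConC_1$ from Theorem~\ref{thm:idempotency} with the equivalences $\IVT\equivSW\BFT_1\equivSW\ConC_1$ from Corollary~\ref{cor:IVT}, together with the fact that idempotency is a property of the Weihrauch degree. Nothing further to add.
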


This means that two realizations of the Intermediate Value Theorem in parallel are more
powerful than just one. 

A problem related to idempotency is whether $\ConC_n$ is a cylinder. Again it is clear that
$\ConC_n$ is a cylinder for $n\geq 2$, which follows from Theorems~\ref{thm:dimension-three}
and~\ref{thm:dim2} and the fact that $\C_{[0,1]}$ is a cylinder. 
We can use the techniques of this section to prove that $\ConC_1$ is not a cylinder.

\begin{theorem}[Cylinder] $\ConC_1$ is not a cylinder.
\end{theorem}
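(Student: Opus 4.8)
The plan is to prove the equivalent statement $\id\times\ConC_1\nleqSW\ConC_1$, where $\id=\id_{\Baire}$; since $f\leqSW\id\times f$ holds for every $f$ (embed via a fixed computable point in the $\id$-slot), this is exactly the failure of the cylinder property. So I would assume towards a contradiction that there are computable $H,K$ with $KGH\vdash\id\times\ConC_1$ for every $G\vdash\ConC_1$. An input to $\id\times\ConC_1$ codes a pair $(p,A)$ with $p\in\Baire$ and $A\In[0,1]$ non-empty connected closed; on it $H$ produces a $\psi_-$-name of a non-empty connected closed set, some realizer $G$ picks a point $x$ of that set, and $K$ must turn a name of $x$ into a name of some $(p,y)$ with $y\in A$ --- and, since we are dealing with strong reducibility, $K$ does this without access to $(p,A)$.

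First I would fix a name $q$ of $[0,1]$ and set $M_p:=\psi_-H\langle p,q\rangle$, a non-empty closed subinterval of $[0,1]$, for each $p\in\Baire$. If $p\neq p'$ then $M_p\cap M_{p'}=\emptyset$: otherwise pick $x_0$ in the intersection and a name $\rho_0$ of $x_0$, and note that there is a realizer $G_0\vdash\ConC_1$ with $G_0H\langle p,q\rangle=G_0H\langle p',q\rangle=\rho_0$ (the value $\rho_0$ is legal on both inputs, since $x_0$ lies in both sets, and $G_0$ may be completed arbitrarily elsewhere); then $K(\rho_0)=KG_0H\langle p,q\rangle$ would name an element of $\{p\}\times[0,1]$ and at the same time $K(\rho_0)=KG_0H\langle p',q\rangle$ would name an element of $\{p'\}\times[0,1]$, which is impossible as the first coordinate is decoded by a function. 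Now split into two cases. If every $M_p$ is non-degenerate, then $\{M_p:p\in\Baire\}$ is an uncountable family of pairwise disjoint non-degenerate subintervals of $[0,1]$, and such a family cannot exist (distinct members would contain distinct rationals). Hence some $M_{p^*}$ is a singleton, say $M_{p^*}=\{c\}$.

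For this remaining case I would argue as follows. Fix a name $\rho$ of $c$ and let $y_\infty$ be the $[0,1]$-coordinate decoded from $K(\rho)$; this is a single number depending only on $\rho$. I claim $y_\infty\in A$ for every non-degenerate proper closed subinterval $A$ of $[0,1]$, which gives the contradiction $y_\infty\in\bigcap\{A\}=\emptyset$. To see the claim, feed $H$ a sequence of names $q_A^{(n)}$ of $A$ that ``delay'' enumerating the complement of $A$, so that $q_A^{(n)}\to q$; by continuity of $H$, $W_n:=H\langle p^*,q_A^{(n)}\rangle\to H\langle p^*,q\rangle$, and by Lemma~\ref{lem:closed-upper-limit} the closed upper limit $\Ls_n\psi_-(W_n)$ is contained in $\psi_-H\langle p^*,q\rangle=\{c\}$, so by compactness of $[0,1]$ we get $d(c,\psi_-(W_n))\to0$. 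Choosing points $x_n\in\psi_-(W_n)$ with $x_n\to c$ and, by admissibility of the (signed-digit) representation of $[0,1]$, names $\rho_n$ of $x_n$ with $\rho_n\to\rho$, and taking a realizer $G^*\vdash\ConC_1$ with $G^*(W_n)=\rho_n$ and $G^*H\langle p^*,q\rangle=\rho$, continuity of $K$ yields that the recovered points $y_n\in A$ satisfy $y_n\to y_\infty$, hence $y_\infty\in A$. (If only finitely many $W_n$ are distinct, then the repeated value already $\psi_-$-represents $\{c\}$ by the same $\Ls$-computation, and the conclusion $y_\infty\in A$ is immediate.) This completes the argument; the same reasoning works verbatim with $\Cantor$ in place of $\Baire$.

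The step I expect to be the main obstacle --- and where computability of $H,K$ and the precise convergence behaviour of $\psi_-$ are genuinely used, in the spirit of the Displacement Principle of Section~\ref{sec:displacement} --- is the limiting argument of the third paragraph: one must pass to a subsequence so that the modified inputs $W_n$ are pairwise distinct (or handle the degenerate alternative), and one needs the standard fact that any sequence $x_n\to c$ in $[0,1]$ lifts to a sequence of names converging to a prescribed name of $c$. These points are routine but require care. By contrast, for $n\ge 3$ the operation $\ConC_n$ \emph{is} a cylinder, since $\ConC_n\equivSW\WKL$ by Theorem~\ref{thm:dimension-three} and $\WKL$ is a cylinder.
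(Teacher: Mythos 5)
Your proof is correct, and it takes a genuinely different route from the paper's. The paper's argument is a short deduction that reuses the general machinery of Section~\ref{sec:displacement}: from $\id\times\ConC_1\leqSW\ConC_1$ one obtains $\id\times\C_{\{0,1\}}\leqSW\ConC_1$ via Proposition~\ref{prop:binary-choice}, the Displacement Principle (Corollary~\ref{cor:displacement}) then yields $\id\leqSW\ConC_1^-$, and since $\ConC_1^-$ has a realizer whose range lies in $\IQ\cap[0,1]$, Proposition~13.2 of \cite{BGM12} forces $\range(\id)=\Baire$ to be countable, a contradiction. You instead work directly from the assumed reduction: injectivity of the $\Baire$-coordinate of $K$ gives pairwise disjointness of the sets $M_p=\psi_-H\langle p,q\rangle$, and you split according to whether all $M_p$ are nondegenerate (ruled out by the elementary fact that pairwise disjoint nondegenerate subintervals of $[0,1]$ are countable) or some $M_{p^*}$ is a singleton; for the singleton case you reprove by hand, via Lemma~\ref{lem:closed-upper-limit}, essentially the $\Ls$-convergence step that underlies the proof of the Displacement Principle, showing the point $y_\infty$ decoded from $K(\rho)$ would have to lie in every nondegenerate proper closed subinterval of $[0,1]$. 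Your argument avoids the detour through $\C_{\{0,1\}}$ and the citation of the external cardinality result from \cite{BGM12}, at the cost of being longer and more hands-on. The two technical caveats you flag --- extracting a pairwise-distinct subsequence of the $W_n$ (or handling the repeated-value case separately), and lifting $x_n\to c$ to names $\rho_n\to\rho$ of a prescribed limit name using admissibility/properness of the representation of $[0,1]$ --- are indeed exactly where care is required, and your treatment of them is sound.
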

\begin{proof}
Let us assume that $\id\times\ConC_1\leqSW\ConC_1$. Then $\id\times\C_{\{0,1\}}\leqSW\ConC_1$ follows 
by Proposition~\ref{prop:binary-choice} and hence $\id\leqSW\ConC_1^-$ by Corollary~\ref{cor:displacement}.
Since $\ConC_1^-$ has a realizer that always selects a rational number, we obtain $\ConC_1^-\leqSW\ConC_1^-|^{\IQ\cap[0,1]}$,
where $\ConC_1^-|^{\IQ\cap[0,1]}$ denotes the restriction of $\ConC_1^-$ to $\IQ\cap[0,1]$ in the image.
By \cite[Proposition~13.2]{BGM12} this implies that $\range(\id)$ is countable, which is a contradiction!
\end{proof}

\section{Conclusions}
\label{sec:conclusions}

We have systematically studied the uniform computational content of the Brouwer Fixed Point Theorem
for any fixed dimension and we have obtained a systematic classification for all dimensions. 
A problem that we have left open is the status of pathwise connected choice of dimension two.
Besides solving this open problem, one can proceed into several different direction.
For one, one could study generalizations of the Brouwer Fixed Point Theorem, such as
the Schauder Fixed Point Theorem or the Kakutani Fixed Point Theorem.
On the other hand, one could study results that are based on the Brouwer Fixed Point Theorem,
such as equilibrium existence theorems in computable economics (see for instance \cite{RW99a}).
Nash equilibria existence theorems for bimatrix games have been studied in \cite{Pau10},
and they can be seen to be strictly simpler than
the general Brouwer Fixed Point Theorem (in fact they can be considered as linear version of it).

\bibliographystyle{plain}
\bibliography{C:/Users/vbrattka/Dropbox/Bibliography/lit}

\end{document}